\documentclass[11pt]{article}

\usepackage{amsmath,amssymb,theorem,enumerate,graphicx,multirow,multicol,array,arydshln,todonotes,dsfont}
\usepackage{subcaption} 
\usepackage[affil-it]{authblk}

\usepackage[utf8]{inputenc}
\usepackage{url}
\usepackage{xspace}

\usepackage{hyperref}
\usepackage{subcaption} 
\usepackage{adjustbox}

\usepackage{comment}
\usepackage{tikz}
\usepackage{transparent}
\usetikzlibrary{positioning,shapes,chains,arrows.meta}
\usetikzlibrary[calc]
\tikzset{>={Latex[width=1.5mm,length=2.3mm]}}

\tikzstyle{line}=[draw]
\tikzset{%
    node/.style={circle, draw=black, fill=black, minimum size=1mm, inner sep=2pt},
    node2/.style={circle, draw=black, fill=black, minimum size=0.5mm, inner sep=1pt}
}

\usepackage{xspace}
\usepackage{etoolbox}
\usepackage{algorithm}
\usepackage{algorithmicx}
\usepackage[noend]{algpseudocode}






%
%
%
%
%
%
%
%
%


\textheight=22cm
\topmargin=-1cm
\textwidth=15cm
\parindent=0 cm
\newtheorem{thm}{Theorem}[section]
\newtheorem{claim}{Claim}[section]
\newtheorem{coro}{Corollary}[section]
\newtheorem{lem}{Lemma}[section]

\newtheorem{prop}{Proposition}[section]

\newtheorem{pf claim}{Proof of Claim}
\newenvironment{proof}[1][Proof]{\noindent \textbf{#1. }}{\ \rule{0.5em}{0.5em}}

\newcommand{\MFS}{{\sc Firebreak Location}\xspace}

\newcommand{\MSFS}{{\sc Windy Firebreak Location}\xspace}

\newcommand{\RSP}{{\sc Restricted Strong Planar Max 2SAT}\xspace}
\newcommand{\GPP}{{\sc Graph Partition}\xspace}



\usepackage{wasysym}
\newcommand\gab[1]{\textcolor{black}{#1}}
\newcommand\pier[1]{\textcolor{black}{#1}}
\newcommand\ales[1]{\textcolor{black}{#1}}
\newcommand\mar[1]{\textcolor{black}{#1}}
\newcommand\newgab[1]{\textcolor{black}{#1}}
\newcommand\newpier[1]{\textcolor{black}{#1}}
\newcommand\newales[1]{\textcolor{black}{#1}}
\newcommand\newmar[1]{\textcolor{black}{#1}}
\newcommand\newmarc[1]{\textcolor{black}{#1}}

\usepackage{newtxtext}

\reversemarginpar

\begin{document}

\title{A graph theoretical approach to the firebreak locating problem\thanks{This work has been supported in part by 
the European project  ``Geospatial based Environment for Optimisation Systems Addressing Fire 
Emergencies'' (GEO-SAFE), contract no. H2020-691161.
}}

\author[1]{Marc Demange}

\affil[1]{\footnotesize School of  Science, RMIT University,
Melbourne, Australia}

\author[2] {Alessia {Di Fonso}}

\affil[2]{\footnotesize Department of Information Engineering, Computer Science and Mathematics, University of L'Aquila, Italy}

\author[2] {Gabriele {Di Stefano}}

\author[3]{Pierpaolo Vittorini}
\affil[3]{\footnotesize Department of Life, Health and Environmental Sciences, University of L'Aquila, Italy}
\affil[ ]{\footnotesize{\tt{marc.demange@rmit.edu.au}, \tt alessia.difonso@graduate.univaq.it, \tt gabriele.distefano@univaq.it, pierpaolo.vittorini@univaq.it }}
\maketitle

\begin{abstract}
\noindent
In the last decade, wildfires have become wider and more destructive. The climate change and the growth of urban areas may further increase the probability of incidence of large-scale fires. 
The risk of fire can be lowered with preventive measures. Among them, firefighting lines are used to stop the fire from spreading beyond them. Due to high costs of installation and maintenance, their placement must be carefully planned. In this work, we address the wildfire management problem from a theoretical point of view and define a risk function to model the fire diffusion phenomena. The land is modeled by a mixed graph in which vertices are areas subject to fire with a certain probability while edges model the probability of fire spreading from one area to another. To reduce the risk, we introduce the \MFS problem that addresses the optimal positioning of firefighting lines under budget constraints. We study the complexity of the problem and prove its hardness even when the graph is planar, bipartite, with maximum degree four and the propagation probabilities are equal to one. We also show an efficient polynomial time algorithm for particular instances on trees.

\end{abstract}

\paragraph*{Keywords:}
Firebreak location, restricted Planar Max 2-SAT, planar graphs, wildfire emergency management, risk management.

\section{Introduction}

\subsection{Motivation}

Wildfire incidence around the world has increased in the past several decades. The recent fires registered in Chile (2017), Portugal (2017), Greece (2018), California (2017, 2018, 2020), Oregon (2020), Australia (2009, 2019-2020), Sweden (2014, 2018) and UK (2018) are examples of that. The possible causes include the decline of traditional rural systems, the reduction of resources finalized to forest management \cite{Whittaker2012VulnerabilityVictoria}, the increased number of citizens living in wildland-urban interfaces and unaware of the risk, the lack of proper wildfire protection measures \cite{Paveglio2015CategorizingArchetypes} and the extreme weather events caused by global climate change \cite{Lung2013AChange}. These causes have led to larger wildfires, consequent conspicuous damages, that in turn caused broader social, economic and environmental impacts \cite{Turco2018SkilfulPredictions}.

Technologies such as mathematical models predicting fire spread, resource allocation tools or infrastructure allocation decision support systems can be very helpful to support efficient and accurate decision-making \cite{Martell2011TheChallenges}. In such a context, the paper focuses on problems related to the construction of fire-fighting lines, which is one of the most common preventive measures adopted to either isolate valuable areas or to provide opportunities \mar{for} firefighters to face a wildfire \cite{Blasi2004IncendiAmbientale.,PrefetdeCorse2013PlanNaturelsWithURL}. In particular, a {\em firebreak} is a fire-fighting line in which all the vegetation and organic matter are completely removed, that prevents a fire from feeding further and expand towards populated areas and important buildings. Firebreaks require annual maintenance: keeping the optimal functionality is very onerous since it requires maintaining the vegetation level as low as possible while it is known that they create favorable conditions for the growth of invasive vegetation \cite{Blasi2004IncendiAmbientale.,PrefetdeCorse2013PlanNaturelsWithURL}. Because of such an economic, ecological and landscaped impact, it is impossible to create fire-fighting lines everywhere. So far, the installation of firebreaks remains empirical and based on the statistical analysis of past fires and on expert opinions.

\subsection{Our contribution}

In such a context, the paper focuses on some problems related to the optimal  
location of firebreaks in a territory, 
to minimize a risk function under a budget constraint. 

The territory is modeled as a {\em mixed graph} \mar{where vertices correspond to areas subject to some fire outbreak and edges represent the possibility of fire spread from one area to another.} The risk relies on ignition probabilities (that capture the possibility of fire outbreak from each area) and on spread probabilities (that capture the possibility of fire spread from one area to another), as well as on values assigned to each area. \newmar{Directed edges represent situations where the fire can spread in one direction only or in both directions with different probabilities. This can result from the topography (in particular the slope) or/and the dominant wind directions during the year.} 

\newmar{A firebreak corresponds to reducing the graph connectivity by edge removal. A cost is associated with each edge removal, and a budget constraint must be satisfied.
Then, the \MFS problem aims to identifying} the best positions for firebreaks \newmar{to minimize the risk while respecting the budget}. 

\pier{The 
risk function and the firebreak location problem are related to similar approaches 
in the literature. For a better comparison among them, we firstly introduce our model. Then, we delve into the related \mar{works} by comparing the risk function with the well-known {\em Independent Cascade model} \cite{Shakarian2015TheModels} and the firebreak location problem with a selection of problems involving the removal of edges in a graph.}
 
\pier{We then 
\mar{study} the complexity of the problem showing, with two successive reductions, its hardness even when the graph is planar, bipartite, with maximum degree~4, and when vertex values, edge costs and the propagation probabilities are one. 
\mar{We} also 
\mar{propose an efficient polynomial time algorithm for} several cases on trees.
}
\newales{Table \ref{table1} summarizes all the results presented in this paper. The meaning of the columns depend on the model that we introduce in the following section.}

The paper is organized as follows. Section \ref{sec:defmodel} introduces the main notations (Subsection \ref{ssec:notations}), the \newpier{general} problem (Subsection \ref{ssec:problem}) and summarizes the related work (Subsection \ref{ssec:related}). Section \ref{sec:complexity} contains the main results concerning the \newales{computational} complexity, whereas Section \ref{sec:polynomially} studies some polynomially solvable cases on trees. A conclusive discussion is then included in Section \ref{sec:conclusions}.

\begin{table} 
\small
\centering
\begin{tabular}{|c|c|c|c|c|c|c|c|c|c|} \hline
\multicolumn{10}{|c|}{\MFS} \\ \hline
\it Graph $(V,E)$ & $\Delta$ & $\pi_{s}$ & $\pi_{i}$ & $\kappa$ & $\varphi$ & $\mathcal{\mathrm{B}}$ & $R$ & \it Complexity & \it Reference\\ \hline
general&  &  &  &  &  &  &  & NP-hard & Prop. \ref{pro:partition}\\ \hline \hline
\hline \multicolumn{10}{|c|}{\MSFS} \\ \hline
\it Graph $(V,E)$  & $\Delta$ & $\pi_{s}$ & $\pi_{i}$ & $\kappa$ & $\varphi$ & $\mathcal{\mathrm{B}}$ & $R$ & \it Complexity & \it Reference\\ \hline
unit-disk &  &1  & uniform  &  & 1 &  &  & NP-C & \cite{safetyscience2021} \\ \hline
star &  &1  & $0/1$  &  &  &  &  & NP-C & Prop. \ref{pro:partition} \\ \hline
bipartite planar& 4 & 1 &  & 1 & 1 &  &  & NP-C & Th. \ref{theo:bipartite-planar4}\\ \hline
subgrid &4 & 1 &  & 1 & $0/1$ &  &  & NP-C &  Prop. \ref{pro:grid} \\ \hline
grid & 4 & 1 &  & $0/1$ & $0/1$ &  &  & NP-C&  Prop. \ref{pro:grid} \\ \hline
trees (Algo.\ref{treecut})&  & 1 & $0/1$ & 1 & 1 & poly  & opt & $O(|V| \cdot B^2)$ & Th. \ref{Theo-tree}  \\ \hline
trees &  & 1 & $ 0/1 $ & $\mathbb{N}$ &  & poly & opt & $O(|V| \cdot B^2)$ & Th. \ref{Theo-tree2}\\ \hline
\end{tabular}

\caption{Results presented in this paper --- Note: {\it poly} stands for $O(Poly(|V|))$, {\it opt} states that we find the minimum risk value, an empty column means that any value is acceptable}
\label{table1}
\end{table}

\section{Definitions and Model}
\label{sec:defmodel}

\subsection{Main notations}
\label{ssec:notations}

\newmar{In this paper, all graphs are finite.}
Let $G=(V,E)$ be a directed graph (or digraph);  a directed edge $e \in E$ between vertices $x,y \in V$ will be denoted $e=xy$.  


A symmetric digraph is also called undirected graph where two edges $xy,yx$ are replaced with a single undirected edge. When no ambiguity will occur, this undirected edge will be denoted $xy$ (or $yx$). A mixed graph has directed and undirected edges. As a consequence, all graph notions not specifically restricted to directed or to undirected graphs will be defined for mixed graphs and then are valid for directed and for undirected graphs as well. \newmar{In a mixed graph, a path needs to respect edge orientation if it includes directed edges}.

\mar{Given a 
mixed graph $G=(V,E)$ and two vertices $u,v \in V$, $u\stackrel{G}{\rightarrow} v$ means that there is a 
path from $u$ to $v$ in $G$ (we say that $v$ is {\em reachable} from $u$ in $G$).   For any set of vertices $V'\subset V$, $r_{G}(V')=\{v\in V,\exists u\in V', u\stackrel{G}{\rightarrow} v\}$ is the set of vertices reachable from $V'$ in $G$.}

By $G'\leq G$ we will denote that $G'=(V,E'), E'\subset E$ is a  partial graph of $G$. For any edge set $H\subset E$, we denote by $G\setminus H$ the partial graph $(V, E\setminus H)$ obtained from $G$ by removing edges in $H$. Given a set $V'\subset V$, $G[V']$ denotes the subgraph induced by $V'$ and any graph $G''=(V'',E'')$, $V''\subset V, E''\subset E$ will be called partial subgraph of $G$.

For any $k\geq 2$,  $P_k= a_1\ldots a_k$ is a path on $k$ vertices $a_1,\ldots, a_k$ and edges $a_ia_{i+1}$, $i=1, \ldots k-1$. $P_2$ is a single edge. The extremities of the path are the two vertices of degree~1.  Paths can be either undirected or directed from $a_1$ to $a_k$. A cycle $C_k$ is obtained from a $P_k$ by adding an edge between its extremities; if it is directed, then the added edge is $a_ka_1$. 

\mar{A mixed graph} is called planar if it can be drawn in the 2-dimension plane without crossing edges. Such a drawing is called {\em planar embedding}.
Given an undirected graph $G$, a subdivision of $G$ is obtained by replacing edges $uv$ by a path $P_k$, $k\geq 2$ (we add $k-2$ intermediate vertices). This transformation preserves planarity.  
$K_p$, $p\geq 1$ will denote an undirected clique on $p$ vertices and $K_{p,q}$, $p,q\geq 1$, will denote a complete bipartite graph with respectively $p$ and $q$ vertices on each side. It is well-known that $K_{3,3}$ is not planar and consequently,  containing (as partial subgraph) a subdivision of $K_{3,3}$ is a certificate of non-planarity.

All graph-theoretical terms not defined here can be found in~\cite{Diestel2018GraphTheory}. For complexity concepts we refer the reader to~\cite{Garey1979ComputersNP-completeness}.

\subsection{The \MFS Problem}
\label{ssec:problem}

The problem \MFS \newmar{was introduced in~\cite{safetyscience2021} and discussed with firefighting agencies as part of the European project GEO-SAFE. To make this paper self contained, we give below the full description of \MFS and define the particular case \MSFS.}

We are given a mixed
graph where vertices are subject to burn and edges represent potential fire spread from one vertex to an adjacent one. Fire ignitions may occur on some vertices. The objective will be to select a set of 
\newmar{edges}, called {\em cut system}, to be blocked (removed) within a budget constraint in order to reduce the induced risk, as described below.

When a link \newmar{between two vertices} is ``treated'' (blocked) (typically installing a fire break corridor) the fire is blocked in both directions. For this reason a treatment between vertices $x$ and $y$ is modeled as the removal of edges $xy$ and $yx$, if existing. To this end, we define a subset $H$ of $E$ a {\em cut system} if:

\[H = H \cup \left\lbrace yx \in E\ | \ xy \in H \right\rbrace\]

An instance is defined by a mixed 
graph $G=(V,E)$; every 
edge \mar{$e=xy\in E$ is assigned  a probability of spread \mar{(in one direction if the edge is directed)} $\pi_s(e)$ and a cost $\kappa(e)$ seen as the cost to cut $xy$ and $yx$ in case both edges exist. In this case, it will be convenient in expressions to allocate half of the cost to $xy$ and half to $yx$, making $\kappa$ just a symmetric function}. 


Every vertex $v\in V$ \mar{is assigned} a value $\varphi(v)$ and a probability of ignition $\pi_i(v)$. The total value $\varphi(V')$ of a subset $V'\subset V$ is defined in an additive way: 

\begin{equation}
	\varphi(V')=\sum\limits_{v\in V'}\varphi(v)
\end{equation}

The cost of $E' \subset E$ is:

\begin{equation}
	\kappa(E')=\sum\limits_{e \in E'}\kappa(e)
\end{equation} 

The global objective will be to select a {\em cut system} $H \subset E$ 
 under a budget constraint: given a budget $B$, \mar{the constraint is} $\kappa(H)\leq B$.

Given such a solution $H$, we will denote by $G_H$ the partial graph $G_H = G \setminus H$ of $G$.

Given a partial graph $G_S\leq G$ (spreading graph) and a set $I\subset V$ of ignited vertices, the induced loss is:
\begin{equation}
	\lambda(G_S,I)=\varphi(r_{G_S}(I))
\end{equation}

To evaluate the {\em risk} associated with $H$ we consider the following two-phase system.

During a first phase we randomly select a set $I \subset V$ of vertices in the probability space ${\cal V}=(V,\pi_i)$ and a set $E_S\subset (E\setminus H)$ of edges (spreading edges) in the probability space ${\cal E}_H=(E \setminus H,\pi_s)$, thus defining a random graph $G_S=(V,E_S) \leq G_H$. Both random choices are independent. 

The probability of a set $I\subset V$ is:

\begin{equation}\label{eq:probai}
	\pi_i(I)=\prod_{v\in I}\pi_i(v)\times \prod_{v\notin I}(1-\pi_i(v))
\end{equation}

Similarly the probability of $G_S$ is:

\begin{equation}\label{eq:probas}
	\pi_s(G_S)=\prod_{e\in E_S}\pi_s(e)\times \prod_{e\notin E_S}(1-\pi_s(e))
\end{equation}

The second phase then associates with the {\em cut system} $H$ the {\em risk} $\rho(G_H)$ as follows:

\begin{equation}\label{eq:defrisk}  
	\rho(G_H)=\sum_{I\subset V}\sum_{G_S \leq G_H}\pi_i(I)\pi_s(G_S)\lambda(G_S,I)
\end{equation}

\begin{quote}
\MFS\\
{\underline{Instance}}: a mixed 
graph $G=(V,E)$; for every 
edge $e\in E$ a probability of spread $\pi_s(e)$ and a cost $\kappa(e)$ \mar{(if $xy$ and $yx$ are in $E$, then $\kappa(xy)=\kappa(yx)$}; for every vertex $v\in V$, a value $\varphi(v)$ and a probability of ignition $\pi_i(v)$. A total budget $B$ and a total risk $R$. \\
{\underline{Question}}: is there a {\em cut system} $H\subset E$ such that $\kappa(H)\leq B$ and $\rho(G_H)\leq R$?\\
We will denote such an instance $(G,\pi_s,\pi_i,\kappa,\varphi,B,R)$.
\end{quote} 

\mar{If we consider the optimization version instead of the decision version, then the threshold $R$ is not part of the instance but becomes the objective to minimize.}
The particular case where all probabilities of spread are equal to~1 is called \MSFS. In this case, the definition of the problem can be simplified: we can directly define it on a mixed graph, stating that, two opposite edges $xy$ and $yx$ are always seen as an undirected edge. Then,  the cost $\kappa(e)$ to remove an undirected edge $e=xy$ corresponds to the cost to remove $xy$ and $yx$. 

The following lemma  simplifies Relation~\ref{eq:defrisk} in the case of \MSFS. Let us consider an instance of \MSFS:  a digraph $G=(V,E)$  with ignition probabilities on vertices and probability of spread equal to~1 for each edge. For a vertex $x\in V$ we denote $p_x$ the probability that $x$ burns. For any cut system $H$, we denote $U_{x,H}=\{t\in V, t\stackrel{G_H}{\rightarrow} x\}$ be the set of vertices $t$  such that there is, in $G_H$, a path from $t$ to $x$. \mar{Note that $x\in U_{x,H}$. }

\begin{lem}\label{lem:px}
Let us consider an instance $G$ of \MSFS,  a cut system $H$ and  a vertex $x$; then: 
\gab{$$p_x= 1- \prod\limits_{t\in U_{x,H}}(1-\pi_i(t))\  {\rm and}\ 
\rho(G_H)= \sum\limits_{x\in G_H} p_x\cdot \varphi(x).$$}
\end{lem}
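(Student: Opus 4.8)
The plan is to compute $p_x$ directly from the two-phase probabilistic model, using the fact that $\pi_s \equiv 1$ so the only randomness that matters for whether $x$ burns is the choice of ignited set $I$. In the \MSFS\ setting every edge of $G_H$ is present with probability one, so $r_{G_S}(I) = r_{G_H}(I)$ almost surely; hence $x$ burns if and only if $x \in r_{G_H}(I)$, which happens exactly when $I \cap U_{x,H} \neq \emptyset$ (there is a path in $G_H$ from some ignited vertex to $x$). So $p_x = \Pr[I \cap U_{x,H} \neq \emptyset] = 1 - \Pr[I \cap U_{x,H} = \emptyset]$. Since the ignition events are independent across vertices (by \eqref{eq:probai}), $\Pr[I \cap U_{x,H} = \emptyset] = \prod_{t \in U_{x,H}} (1 - \pi_i(t))$, giving the first formula.

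For the second formula I would rewrite the risk \eqref{eq:defrisk} by linearity of expectation. Since $\pi_s \equiv 1$ the inner sum over $G_S \leq G_H$ collapses (the only spreading graph with nonzero probability is $G_H$ itself), so
\[
\rho(G_H) = \sum_{I \subset V} \pi_i(I)\, \lambda(G_H, I) = \sum_{I \subset V} \pi_i(I)\, \varphi\big(r_{G_H}(I)\big).
\]
Then expand $\varphi(r_{G_H}(I)) = \sum_{x \in V} \varphi(x)\, \mathbf{1}[x \in r_{G_H}(I)]$ and swap the order of summation:
\[
\rho(G_H) = \sum_{x \in V} \varphi(x) \sum_{I \subset V} \pi_i(I)\, \mathbf{1}[x \in r_{G_H}(I)] = \sum_{x \in V} \varphi(x)\, \Pr[x \in r_{G_H}(I)] = \sum_{x \in V} \varphi(x)\, p_x,
\]
which is the claimed identity (the notation $\sum_{x \in G_H}$ in the statement meaning the sum over all vertices of $G_H$).

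The only genuinely delicate point is justifying the collapse of the sum over spreading subgraphs $G_S \leq G_H$: one must note that $\pi_s(G_S) = 0$ unless $E_S = E \setminus H$, because the factor $\prod_{e \notin E_S}(1 - \pi_s(e))$ vanishes as soon as any edge is omitted. Everything else is bookkeeping: independence of the two random choices, independence of ignitions across vertices, and Fubini/linearity to interchange the finite sums. I would also remark explicitly that $x \in U_{x,H}$ (the trivial length-zero path), so a vertex that is itself ignited always burns, consistent with the formula.
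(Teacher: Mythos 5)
Your proposal is correct and follows essentially the same route as the paper's proof: observe that with $\pi_s\equiv 1$ only $G_S=G_H$ has nonzero probability, so the inner sum in Relation~\ref{eq:defrisk} collapses, and then use independence of the ignition events to get $p_x$ and linearity to get $\rho(G_H)=\sum_x p_x\varphi(x)$. You merely spell out the bookkeeping (indicator expansion, interchange of sums, the reachability characterisation $I\cap U_{x,H}\neq\emptyset$) that the paper leaves implicit, which is fine.
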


\begin{proof}

Using Relation~\ref{eq:probas} we have  
	$\pi_s(G_H)=1$ and $\forall G_S\leq G_H, G_S\neq G_H, \pi_s(G_S)=0$. The expression of $p_x$ is then a direct application of Relations~\ref{eq:probai} and \ref{eq:probas} taking into account that the fire certainly spreads through connected components. Then, the expression of $\rho(G_H)$ is an immediate consequence of Relation~\ref{eq:defrisk} taking into account that, in the second sum,  only the terms with $G_S=G_H$ are not null. 

\end{proof}

Note that, in general, $\rho(G_H)$ is not polynomial to compute as outlined in the Subsection~\ref{ssec:related} and, as a consequence, \MFS is not necessarily in NP.  Lemma~\ref{lem:px} implies that the objective value of \MSFS can be computed in polynomial time. 
Let us remark that $\rho(G )$ is calculable in polynomial time when
$k \log \Delta \in O(\log \log n)$, where $k$ is the maximum  length of a path
between two vertices and $\Delta$ is the maximum vertex degree in the graph.


Under these conditions, the number of nodes that can affect the probability that a node can burn is $O(\Delta^k)$. The number of edges involved is therefore $O(\Delta^{2k})$ and a brute-force algorithm has to test $O(2^{\Delta^{2k}})$ realizations of the fire propagation on the subgraph, for each burning node of the graph. Imposing a polynomial time for this operation, we obtain $2^{\Delta^{2k}}\in O(n^t)$, for a constant $t$, and hence -- applying two times the logarithmic operator -- we have $k \log \Delta \in O(\log \log n)$.

\subsection{Related Work}
\label{ssec:related}


\subsubsection{The risk function and the Independent Cascade model}

The {\em risk} $\rho(G_H)$ is related to the well-known Independent Cascade model (IC)~\cite{Shakarian2015TheModels}.

The IC model was initially adopted to model the interaction between particle systems \cite{Durrett1988}, then investigated in the context of marketing \cite{Goldenberg2001}, and also used to model the spread of influence in social networks \cite{Kempe2015MaximizingNetwork} or the spread of a disease~\cite{Vittorini2010AnDiseases}. 

The IC model corresponds to the propagation process we have modeled for the case of fire. It is commonly adopted in various contexts where something (let us call it the {\em object}) is spread across a network. Affected vertices are called {\em activated}, which in our case corresponds to be on fire. The model uses discrete time: neighbors of activated vertices are randomly activated using transmission probabilities on links. 
The only difference with our model is the notion of ignition probabilities while in the usual IC model one single vertex or a set of vertices are initially activated. Nevertheless, we could easily remove ignition probabilities   by (i) introducing in our model a vertex $f$ (i.e., the fire) with $\varphi(f)=0$, (ii)  adding for each vertex $v \in V$ an edge $e=fv$ with $\pi_s(e)=\pi_i(v)$ and $\kappa(e)=\infty$, and then (iii) removing the ignition probabilities. However, such a transformation does not necessarily preserve structural properties of the network, in particular planarity that plays an important role in our study.

Computing the risk $\rho(G_H)$ was shown \#P-hard in general graphs~\cite{wang2012scalable} and \#P-complete in planar graphs~\cite{Provan1986}, even with binary ignition probabilities. Approximating $\rho(G_H)$ in the unweighted case is discussed in~\cite{Kempe2015MaximizingNetwork}, where an $\varepsilon$-approximation (approximation schema) \newgab{is provided} under some assumptions. To our knowledge, the possibility to compute an approximated value in the general case is still open. Efficient ways to compute the exact value of $\rho(G_H)$ are also discussed in~\cite{Maehara2017}.

In some contexts and in particular for the spread of an information in a social network,  the so-called  Linear Threshold (LT) model~\cite{Shakarian2015TheModels} is preferred to the IC model. Instead of activating a vertex when the object is transmitted from a neighbor, a given vertex needs to receive a fixed amount of activation from the neighborhood to be itself activated.

\subsubsection{Close optimization problems in other contexts. }
Graph optimization has been used to control the spread of an unwanted object in a network and an extended literature has been developed on this problem. Most models address the spread of false information in social networks or the spread of a disease in the graph of contacts associated with a population.

In these contexts, let us outline two problems, the \textsc{CuttingEdge} problem~\cite{khalil2013cuttingedge} and the \textsc{Contamination Minimization} problem~\cite{Kimura2009}. Both problems have very similar objectives as \MFS and consists in removing a fixed number of edges to minimize the risk, seen as the expected number of activated vertices.  The former problem uses the LT model while the \newgab{latter} uses the IC model. 



A greedy approach is known to guarantee a constant approximation ratio for the \textsc{CuttingEdge} problem~\cite{khalil2013cuttingedge}.
In~\cite{Kimura2009}, some greedy heuristics are considered for \textsc{Contamination Minimization} problem and, to our knowledge, no constant approximation is known. 

Apart from the diffusion model \-- LT or IC \--, the main differences with \MFS are as follows:
\begin{enumerate}
    \item these two problems are unweighted in the sense that all activation probabilities are supposed equal, there is no value associated with vertices and no cost associated with edges.
    \item the \newgab{most important} is the edge removal process: both problems consider removing directed edges while, in \MFS, removing one directed edge automatically removes the inverse edge if it exists.
\end{enumerate}


Finally, a slightly different approach is proposed in~\cite{Tong}. Based on the assumption that the risk of spread is linked to the largest eigenvalue of the adjacency graph, they consider the problem of removing a fixed number of edges to minimize the maximum eigenvalue in the resulting graph. They prove the NP-hardness and propose an heuristic for this problem.

Note that  in most studies aiming to control the spread of information through a social network or the spread of a disease across a population, the underlying graph is dense and the natural topologies of the network are totally different than for our problem. In particular, planarity is not relevant for these networks while it is very natural in our case and is central in our work.

\subsubsection{Close problems in fire emergency management}

We conclude this section by mentioning a few problems in fire emergency management that are naturally posed on the fire spread graph and consequently, for which planar instances are relevant.

The same ignition and propagation model as ours has been used on a directed \newmar{graph} to model and simulate fire spread in a building~\cite{bayesian-building} or on a landscape~\cite{Begonia-Adan2020}. These papers  refer to the Bayesian networks terminology \newmar{with the restriction that the graph of spread has no directed cycle (directed acyclic graph).}   A similar approach is used in~\cite{Mahmoud2018UnravelingFires}. However, in these papers, the model allows to simulate the spread and then, is used as a decision tool to evaluate scenarios. Here,  we use it to  analytically define the objective value of our minimization problem. \newmar{Also, in practice, the case of directed acyclic graphs of spread is satisfactory when considering a specific fire with, in particular,  specific wind directions, but becomes restrictive when considering all possible fire scenarios. The problem we address, in prevention, requires considering all possible scenarios during a long period (typically several years). }

In \cite{Russo2016AFires}, the terrain is modeled as a lattice  (obtained by tessellation), where each node is susceptible to burn, and the fire propagation is represented as a random walk from a starting vertex on fire. Centrality statistics on the adjacency matrix is used for the identification of nodes that contribute the most to the fire spread through the network. These  vertices are marked as the places where to build fuel breaks. 

The difference with our model is twofold: first, firebreaks are meant to be placed on vertices and not on edges; this changes the combinatorial structure of the problem. Their model could be transformed into our setup transforming every node $x$ with two nodes $x^+, x^-$ linked by one  directed edge from $x^+$ to $x^-$ in such a way  that all other edges adjacent to $x^+$ (resp., $x^-$) are entering (resp., exiting) edges. Cost system could impose that only these edges can be part of a cut. On the other hand, in the undirected case, our model could be expressed as a node-based model in the line graph with node set the edges of the original graph and edges between two nodes representing adjacent edges. However, in this case, the spread of fire in the original network cannot be easily represented with the spread in the obtained network. Secondly, the objective function in~\cite{Russo2016AFires} is not directly linked to the risk we use in this work.

A few other combinatorial problems  have been studied on a similar fire-spread graph in a wild fire management context. In particular, the \textsc{Fuel Management} problem aims to schedule fuel reduction in the landscape in order to reduce the risk of spread.  The instance is similar as for \MFS. The main difference is that treating a zone (prescribed fire or harvesting) corresponds to removing all edges adjacent to the related vertex, which, again, makes this problem vertex-based. A second difference is that a vertex removed from the graph may reappear after a few years with the natural growth of  the vegetation. The objective is to schedule preventive treatments over a long period.  To our knowledge, most approaches to solve 
\textsc{Fuel Management} use integer linear formulations for which it is hard to exploit planarity. In~\cite{icores16} however, a multi-period \textsc{Fuel Management} problem is considered from a graph optimization perspective. The problem is reduced to a \textsc{Vertex Cover} problem and is shown hard on planar graphs. Planarity is then exploited to derive an asymptotic approximation  schema.

Finally, we have introduced the problem \MFS in~\cite{safetyscience2021}. We have proved in particular that \MSFS is hard in unit-disk graphs (intersection graph of disks of fixed radius), as explained in Section~\ref{sec:complexity}. This class of graphs is natural when considering embers-based fire spread. We have proposed as well a heuristic for this problem and some generalizations.  We have implemented and tested it as part of web-application developed in collaboration with a fire agency in Corsica (France). First results are very encouraging.

\section{Complexity results for planar instances}
\label{sec:complexity}

\newmar{In~\cite{safetyscience2021}, we noticed that, if all ignition probabilities are equal, then \MSFS in an undirected graph can be reduced to the $k$-\GPP problem that consists in removing a fixed number of edges to split a graph in $k$ connected components of balanced size. This argument allows to prove the hardness of \MSFS in any class of graph where   $k$-\GPP is hard and in particular~\cite{Diaz-UDG} in unit-disk graphs. To our knowledge, the complexity of $k$-\GPP is not known in restricted classes of planar graphs that are the most natural case for our problem in the context of wildfire management.} 

\subsection{A restricted version of {\sc Planar Max 2SAT}}\label{subsec:restricted planar 2-SAT}
\newmar{To establish Proposition~\ref{prop:complex-Max 2SAT}, one of the main results of this section,} 
we will need a restricted version of {\sc Max 2SAT}, the Maximum 2-Satisfiability problem (see~\cite{Garey1979ComputersNP-completeness}).  

A {\sc SAT} instance $\Phi$ is defined as a set $X$ of $n$ boolean variables and a set $C$ of $m$ clauses, each  defined as a set of literals: every variable $x$ corresponds to two literals $x$ (the positive form) and $\bar x$ (the negative form). A $k$-clause, $k\geq 1$, is a clause containing exactly $k$ literals, all different. To simplify the notations we will denote by $(\ell_1, \ldots, \ell_k)$ the $k$-clause with literals $\ell_i, i=1, \ldots k$, without distinction between the orders in which they are listed. A truth assignment assigns a boolean value (True or False) to each variable corresponding to a truth assignment of opposite values for the two literals $x$ and $\bar x$: $\bar x$ is True if and only if $x$ is False. For a literal $\ell\in\{x, \bar x\}$ we denote by $\bar \ell$ its negation: $\bar \ell=\bar x$ if $\ell= x$ and $\bar \ell= x$ if $\ell = \bar x$. A clause is satisfied if at least one of its literals is satisfied. The usual SAT problem asks whether there is a truth assignment satisfying all clauses and 3SAT is the  restriction where $\Phi$ contains only 3-clauses. 
In what follows,  we will assume that no clause contains two opposite literals ($x$ and $\bar x$), in which case the clause would be always satisfied.\\

A {\sc Max 2SAT} instance is a SAT instance $\Phi$ with only 2-clauses but this time the aim is to find a truth assignment on  $X$ maximizing the number of  clauses that are satisfied.  

In planar versions of SAT problems one usually considers the bipartite graph $G_\Phi=(X\cup C, E)$ with an edge $xc\in E$  between a variable vertex $x$ and a clause vertex $c$ if either $x$ or $\bar x$  appears  in the clause $c$.  {\sc Planar Max 2SAT} will be the restricted version of {\sc Max 2SAT} when the graph $G_\Phi$ is planar. Edges of $G_\Phi$ can be labelled with literals with all edges incident to a variable vertex associated with variable $x$ are labelled either with $x$ or with $\bar x$.

Here, we will need a restricted condition of planarity, called {\em strong planarity}, ensuring that there is a planar embedding of $G_\Phi$ such that, for every variable vertex $x$, all the edges incident to $x$  and with the same label can be ``grouped'' on the same side given an orientation of the 2 dimensional plane.  More formally we use the graph $\widetilde{G}_\Phi$ obtained from $G_\Phi$ by replacing variable vertices by a path $P_3$ as follows:
\begin{quote}
  {\bf -} Every variable $x$ is associated with a path \mar{$P_3$}  $xx'\bar x$ with vertices $x,x',\bar x$ and edges $xx', x'\bar x$. \\
 {\bf -} Every clause $c$ is associated with a vertex $c$.\\
{\bf -} There is an edge $cx$ (resp. $c\bar x$) if the literal $x$ (resp. $\bar x$) appears in the clause~$c$.
\end{quote}

A SAT instance $\Phi$ will be called {\em strongly planar} if  $\widetilde{G}_\Phi$ is planar.\\

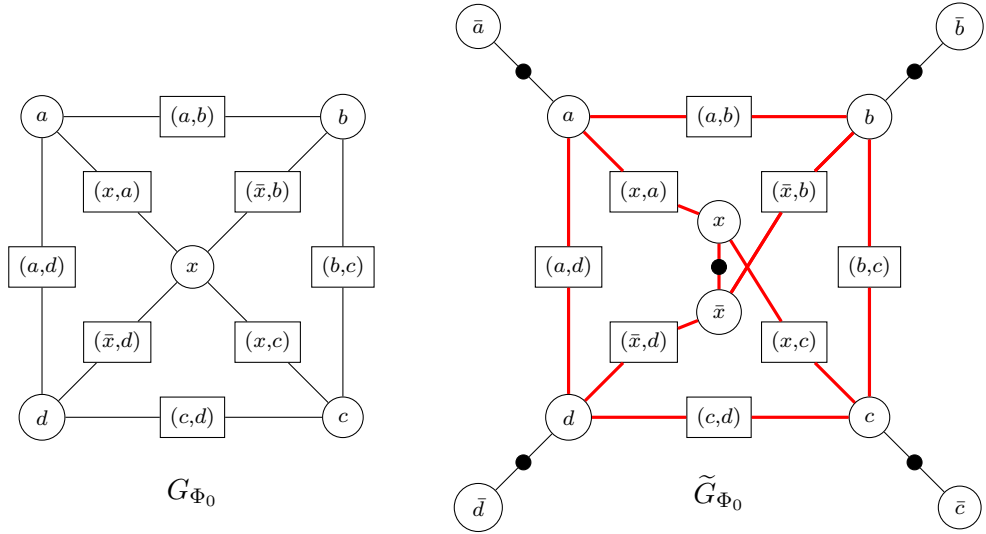
\begin{figure}[tb]
  \begin{center}
    \begin{tikzpicture}[node distance=1cm and 1cm]

\tikzstyle var=[shape=circle,draw]
\tikzstyle clause=[shape=rectangle,draw]

    \node[style = var] (x) at (-5,0) {$\scriptstyle x$};
     \node[style = clause] (xa) at (-6,1) {${\scriptstyle(x,a)}$};
      \node[style = clause] (xc) at (-4,-1) {${\scriptstyle(x,c)}$};
   \node[style = clause](xbb) at (-4,1) {${\scriptstyle(\bar x,b)}$};
   \node[style = clause] (xbd) at (-6,-1){${\scriptstyle(\bar x,d)}$};
   \node[style = var]  (a) at (-7,2) {$ \scriptstyle a$};
\node[style = var]  (c) at (-3,-2) {$\scriptstyle c$};
\node[style = var]  (b) at (-3,2) {$\scriptstyle b$};
  \node[style = var]  (d) at (-7,-2) {$\scriptstyle d$};
   \node[style = clause] (ab) at (-5,2) {${\scriptstyle(a,b)}$};
 \node[style = clause] (bc) at (-3,0) {${\scriptstyle(b,c)}$};
 \node[style = clause] (cd) at (-5,-2) {${\scriptstyle(c,d)}$};
\node[style = clause](ad) at (-7,0) {${\scriptstyle(a,d)}$};

     \node[style = clause] (xa2) at (1,1) {${\scriptstyle(x,a)}$};
      \node[style = clause] (xc2) at (3,-1) {${\scriptstyle(x,c)}$};
   \node[style = clause](xbb2) at (3,1) {${\scriptstyle(\bar x,b)}$};
   \node[style = clause] (xbd2) at (1,-1){${\scriptstyle(\bar x,d)}$};
   \node[style = var]  (a2) at (0,2) {$ \scriptstyle a$};
\node[style = var]  (c2) at (4,-2) {$\scriptstyle c$};
\node[style = var]  (b2) at (4,2) {$\scriptstyle b$};
  \node[style = var]  (d2) at (0,-2) {$\scriptstyle d$};
   \node[style = clause] (ab2) at (2,2) {${\scriptstyle(a,b)}$};
 \node[style = clause] (bc2) at (4,0) {${\scriptstyle(b,c)}$};
 \node[style = clause] (cd2) at (2,-2) {${\scriptstyle(c,d)}$};
\node[style = clause](ad2) at (0,0) {${\scriptstyle(a,d)}$};

 \node[style = var] (x2) at (2,.6) {$\scriptstyle x$};
  \node[style = var] (x4) at (2,-.6) {$\scriptstyle \bar x$};
    \node[node] (x3) at (2,0) {};

 \node[style = var]  (a4) at (-1.2,3.2) {$ \scriptstyle \bar a$};
    \node[node] (a3) at (-0.6,2.6) {};

\node[style = var]  (b4) at (5.2,3.2) {$\scriptstyle \bar b$};
    \node[node] (b3) at (4.6,2.6) {};

\node[style = var]  (c4) at (5.2,-3.2) {$\scriptstyle \bar c$};
    \node[node] (c3) at (4.6,-2.6) {};

 \node[style = var]  (d4) at (-1.2,-3.2) {$\scriptstyle \bar d$};  
     \node[node] (d3) at (-0.6,-2.6) {};
\foreach \from/\to in {a/xa,a/ab,a/ad,x/xa,x/xbb,x/xbd,x/xc,d/ad,d/xbd,d/cd,c/cd,c/xc,c/bc,b/bc,b/xbb,b/ab}
 \draw (\from) edge node{} (\to) ;
 \foreach \from/\to in {a2/xa2,a2/ab2,a2/ad2,x2/xa2,x4/xbb2,x4/xbd2,x2/xc2,d2/ad2,d2/xbd2,d2/cd2,c2/cd2,c2/xc2,c2/bc2,b2/bc2,b2/xbb2,b2/ab2}
 \draw (\from) edge node{} (\to) ;
  \foreach \from/\to in {x2/x3,x3/x4,a2/a3,a3/a4,b2/b3,b3/b4,c2/c3,c3/c4,d2/d3,d3/d4}
 \draw (\from) edge node{} (\to) ;

\foreach \from/\to in {a2/xa2,x2/xa2,x2/xc2, c2/xc2, x2/x3,x3/x4, d2/xbd2, x4/xbd2,d2/ad2, a2/ad2, c2/cd2, d2/cd2, b2/xbb2, x4/xbb2,b2/xbb2, x4/xbb2,b2/ab2, a2/ab2, c2/bc2, b2/bc2}
 \draw[very thick,red] (\from) edge node{} (\to) ;

     \node[rectangle, minimum width=2cm] (gphi0) at (-5,-3) {$G_{\Phi_0}$};
     \node[rectangle, minimum width=2cm] (tildegphi0) at (2,-3) {$\widetilde G_{\Phi_0}$};

      \end{tikzpicture}
  \end{center}
  \caption{An example of 2-SAT instance $\Phi_0=(X_0,C_0)$  that is planar but not strongly planar.  $X_0=\{x,a,b,c,d\}$ and $C_0=\{(x,a), (x,c), (\bar x,b), (\bar x, d),  (a,b), (b,c),(c,d), (a,d)\}$. A planar embedding of $G_{\Phi_0}$ (left) and in $\widetilde G_{\Phi_0}$ (right), the edges of the subdivision of $K_{3,3}$ are highlighted in red. To simplify the figure, vertices $x',a',b',c'$ and $d'$ are represented by a black vertex without label.}
  \label{fig:Phi0}
\end{figure}


Note that $\widetilde{G}_\Phi$ is still bipartite; moreover if   $\widetilde{G}_\Phi$ is planar, then ${G_\Phi}$ is also planar but the converse is not true. Consider for instance the instance $\Phi_0$ with variables $X_0=\{x,a,b,c,d\}$ and clauses $C_0=\{(x,a), (x,c), (\bar x,b), (\bar x, d),  (a,b), (b,c),$ $(c,d), (a,d)\}$ (see Figure~\ref{fig:Phi0}). $G_{\Phi_0}$ is planar since it is a subdivision of the graph obtained from a $C_4$ $abcd$ by adding a universal vertex $x$ (a wheel on five vertices). However, $\widetilde G_{\Phi_0}$ is not planar since it contains a subdivision of the complete bipartite graph $K_{3,3}$ 
which is not planar.

{\sc Strong Planar Max 2SAT} is defined as the restriction of  {\sc Max 2SAT} for which $\widetilde{G}_\Phi$ is planar; it is a restricted case of {\sc Planar Max 2SAT}. We then consider the  \RSP defined as follows (decision version):
\begin{quote}
\RSP\\
{\underline{Instance}}: a {\sc Max 2SAT} instance  $\Phi=(X,C)$ defined by a set of boolean variables $X$ and a set of 2-clauses $C$ as well as an integer $K\leq |C|$ with the following restrictions:\\
\mbox{}\qquad (i): $\widetilde{G}_\Phi$ is planar; \\
\mbox{}\qquad(ii): each literal appears in at most four clauses \mar{(So, $\widetilde{G}_\Phi$ is of maximum degree~5)}.\\
{\underline{Question}}: is there a truth assignment of variables such that at least $K$ clauses are satisfied? \\
We will denote such an instance $(\Phi,K)$ or $(X,C,K)$.
\end{quote}

We are now ready to establish the following proposition  used in the next section. The decision version of {\sc Max 2SAT} is known to be NP-complete~\cite{Garey1976SomeProblems} and moreover, in~\cite{Guibas1993ApproximatingPaths} it is noticed that the reduction preserves planarity, thus inducing that the decision version of {\sc Planar Max 2SAT} is NP-complete using {\sc Planar 3SAT}~\cite{lichtenstein1982planar}. Unfortunately, the reduction given in~\cite{Garey1976SomeProblems} does not preserve the strong planarity property and we will need to slightly modify and rewrite the argument given in~\cite{Guibas1993ApproximatingPaths}.  

\begin{prop}\label{prop:complex-Max 2SAT}
	\RSP is NP-complete.
\end{prop}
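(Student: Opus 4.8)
The plan is to reduce from {\sc Planar 3SAT}, following the classical Garey--Johnson--Stockmeyer gadget that replaces each $3$-clause by a bundle of $2$-clauses together with an auxiliary variable, while being careful that every step respects both the maximum-degree restriction~(ii) and, more delicately, the strong planarity restriction~(i). Recall the standard gadget: a $3$-clause $(\ell_1,\ell_2,\ell_3)$ is replaced (introducing a fresh variable $d$) by the ten $2$-clauses $(\ell_1,\ell_2)$, $(\ell_2,\ell_3)$, $(\ell_1,\ell_3)$, $(\ell_1,\bar d)$, $(\ell_2,\bar d)$, $(\ell_3,\bar d)$, $(\bar\ell_1,d)$, $(\bar\ell_2,d)$, $(\bar\ell_3,d)$, and $(d,\bar d)$ -- actually the leaner $7$-clause version of Guibas et al.\ is preferable since it keeps the degrees small; I would use that one and state precisely the "$k$ out of $7$" accounting lemma: an assignment of $\ell_1,\ell_2,\ell_3$ satisfying the $3$-clause can be extended to satisfy exactly $7$ of the $2$-clauses, and an assignment falsifying it satisfies at most $6$, so the $3$-SAT instance with $m$ clauses is satisfiable iff the constructed {\sc Max 2SAT} instance has an assignment satisfying at least $7m$ clauses; set $K=7m$.

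Next I would handle the degree bound. A variable $x$ of the $3$-SAT instance may occur in many clauses, so in the naive construction a literal would feed into arbitrarily many gadgets, violating~(ii). The fix is a standard \emph{variable-splitting} preprocessing on the {\sc Planar 3SAT} instance: replace an occurrence chain of $x$ by a cycle of copies $x_1,x_2,\dots,x_t$ glued by equality clauses. Since $2$-clauses are available for free here, equality $x_i \leftrightarrow x_{i+1}$ is enforced by the pair $(x_i,\bar x_{i+1})$, $(\bar x_i,x_{i+1})$, both of which are satisfiable simultaneously exactly when $x_i=x_{i+1}$; again this contributes a fixed number of always-satisfiable clauses to the count and is absorbed into $K$. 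After splitting, each literal of the resulting instance occurs in at most four $2$-clauses (two from the equality gadget on each side, or the gadget clauses), giving~(ii) and hence $\Delta(\widetilde G_\Phi)\le 5$. This splitting is routine but I would check the arithmetic carefully so that the final threshold $K$ is still of the form (fixed constant)$\times$(size), and the equivalence "satisfiable iff $\ge K$" is preserved.

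The genuinely delicate part is establishing strong planarity of $\widetilde G_\Phi$, i.e.\ planarity of the graph in which each variable is a $P_3$ $x\,x'\,\bar x$ with all positive occurrences attached to $x$ and all negative ones to $\bar x$. The example $\Phi_0$ in Figure~\ref{fig:Phi0} shows that mere planarity of $G_\Phi$ does not suffice, so the reduction of Garey--Johnson--Stockmeyer cannot be quoted verbatim. The plan is to work from a \emph{planar embedding} of the {\sc Planar 3SAT} instance in which the variables lie on a horizontal line and each clause is drawn in the upper or lower half-plane, connected to its (at most three) variables by non-crossing curves -- this "one-sided" drawing is exactly what {\sc Planar 3SAT} provides (Lichtenstein). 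In such an embedding each variable vertex already has its incident edges partitioned, along the boundary, into a contiguous block of up-edges and a contiguous block of down-edges; after variable splitting each copy $x_i$ inherits a small contiguous arc of occurrences. The point is then to route the two literal-sides: when I draw each gadget, I place all clauses/edges that use the literal $\ell$ so that their attachment points around the relevant variable copy fall into one contiguous arc, which lets me split that vertex into the $P_3$ $x\,x'\,\bar x$ without introducing a crossing -- the $x'$ vertex sits on the line, $x$ carries the (contiguous) positive arc, $\bar x$ carries the (contiguous) negative arc, and the equality-cycle edges between consecutive copies run along the line. I also need to check that the internal vertices of each $2$-clause gadget (in particular the auxiliary variables $d$, which themselves must be $P_3$'s $d\,d'\,\bar d$) can be embedded locally inside the face where their parent $3$-clause was drawn; since a single gadget is a small fixed graph one verifies once and for all that it has a planar embedding with the three literal-terminals and the two $d$-terminals appearing on the outer face in the required cyclic order, and then glue.

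Thus the proof structure I would write is: (1) {\sc Planar 3SAT} is NP-complete (cite Lichtenstein), and admits the one-sided variables-on-a-line embedding; (2) apply variable splitting with equality $2$-clause gadgets to get bounded occurrences while keeping planarity and the one-sided structure; (3) replace each $3$-clause by the constant-size $2$-clause gadget, fixing $K$ as the appropriate multiple of the instance size, and prove the "at least $K$ iff satisfiable" equivalence via the per-gadget counting lemma; (4) exhibit, from the one-sided embedding plus the per-gadget local embeddings, a planar embedding of $\widetilde G_\Phi$, i.e.\ verify strong planarity and that $\Delta(\widetilde G_\Phi)\le 5$; (5) observe the reduction is polynomial and that {\sc Max 2SAT}, hence its restriction, is in NP, concluding NP-completeness. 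The main obstacle, and where I expect to spend the most care, is step~(4): guaranteeing that the same-label edges around every (split) variable vertex can be made contiguous in a single global embedding -- this is precisely the property that fails for $\Phi_0$ -- and verifying the local planar embeddings of the clause gadgets are compatible (consistent terminal orderings) so they can be stitched into that global embedding.
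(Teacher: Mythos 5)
Your overall strategy (reduce from a planar 3SAT variant via the Garey--Johnson--Stockmeyer clause gadget, with a per-gadget ``7 if satisfied, at most 6 otherwise'' count and $K$ equal to the maximum attainable total) is the same as the paper's, but you diverge on the two restrictions that make \RSP delicate, and it is exactly there that your plan has gaps. First, the degree bound (ii) does not come out as you claim. With your equality cycles, each literal of a copy $x_i$ already occurs in two equality clauses ($x_i$ in $(x_i,\bar x_{i+1})$ and $(\bar x_{i-1},x_i)$, symmetrically for $\bar x_i$); if that copy feeds a $3$-clause gadget positively, the standard ten-clause gadget uses the literal $\ell_1$ three more times ($(\ell_1,\ell_2),(\ell_1,\ell_3),(\ell_1,a_c)$), giving five occurrences, which violates the ``at most four'' restriction. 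The ``leaner $7$-clause version of Guibas et al.'' you invoke to save the count is not specified and appears to conflate the threshold $7$ (number of satisfiable clauses per gadget) with the gadget size; no such gadget is exhibited, so as written the arithmetic simply fails. (This is fixable, e.g.\ by feeding the three in-gadget occurrences from three distinct consecutive copies, or by switching to one-directional implication clauses $(\bar x_i,x_{i+1})$ around the cycle, but you would have to say so.) Also, your assertion that the equality clauses are ``always-satisfiable'' is wrong --- only one of the pair is satisfied when the copies disagree; the reduction still works because $K$ is the ceiling, but that argument needs to be made.

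Second, step (4), which you correctly identify as the crux, contains an unaddressed obstruction rather than a mere routing exercise. Your equality gadget is a ``twist'': the clause vertex of $(x_i,\bar x_{i+1})$ must join the positive end of copy $i$ to the negative end of copy $i+1$, while $(\bar x_i,x_{i+1})$ joins the negative end of copy $i$ to the positive end of copy $i+1$. If consecutive copies are drawn with the same literal on top, these two connections cross in the strip between the copies; if you alternate the orientation to avoid this, the side (above/below the variable line) on which each literal sits alternates along the chain, while the side on which each clause attaches is dictated by the fixed Lichtenstein embedding --- so occurrence edges are forced to cross the chain of equality gadgets. Your proposal neither notices nor resolves this tension, and the example $\Phi_0$ in the paper shows that ``planar, hence strongly planar after local rearrangement'' is not a safe default. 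The paper sidesteps the whole issue by reducing from the Dahlhaus--Johnson--Papadimitriou--Seymour--Yannakakis restricted {\sc Planar 3SAT}, in which every variable occurs in exactly two $2$-clauses (once positively, once negatively) and one $3$-clause: degree-$3$ variable vertices make strong planarity of $\widetilde G_\Phi$ automatic and the occurrence bound immediate, so no splitting and no global embedding argument are needed. Until you either supply a concrete equality/implication gadget without the twist (and redo the occurrence count with it) or start from a bounded-occurrence, separated planar variant as the paper does, the proof is not complete.
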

\begin{proof}
Given a truth assignment one can decide in $O(m+n)$ whether it satisfies at least $K$ clauses and consequently the problem is in NP. We will now show that a restricted version of {\sc Planar 3SAT}, shown as NP-complete in~\cite{dahlhaus1994complexity}, polynomially reduces to \RSP. In~\cite{dahlhaus1994complexity}, it is proved that the following problem is NP-complete: 

\begin{quote}
	An instance $\Phi$ is defined as a set $X$ of boolean variables and a set $C$ of 2-clauses and 3-clauses. The graph $G_\Phi$ is planar and every variable $x$ appears in exactly two 2-clauses and one 3-clause. Moreover, $x$ appears once in positive form (literal $x$) and once in negative form (literal $\bar x$) in the 2-clauses. 
\end{quote}

 Consider now such an instance $\Phi=(X,C)$. Denote $C=C_2\cup C_3$ where $C_2$ is the set of 2-clauses and $C_3$ the set of 3-clauses.  Since every variable appears  in exactly three clauses, if $G_\Phi$ is planar, so does $\widetilde{G}_\Phi$. We describe below how to build from $\Phi=(X,C)$ an instance $(\Phi^\ast,K)=(X^\ast, C^\ast,K)$ of \RSP such that $\Phi$ is satisfiable if and only if $(\Phi^\ast,K)$ is satisfiable.\\
 
 \newpage
 
\underline{Case of 3-clauses}:
 
For every 3-clause $c=(\ell_1, \ell_2,\ell_3)$, where $\ell_i$, $i=1,2,3$ are literals associated with variables in $X$, we introduce a new variable $a_c$ and replace $c$ with six 2-clauses  and four 1-clauses (1-clauses will be replaced later by two 2-clauses): $c$ is replaced by the set of ten clauses ${\cal C}_c=\{(\ell_1,\ell_2), (\ell_1,\ell_3), (\ell_2,\ell_3), (\ell_1,a_c), (\ell_2,a_c), (\ell_3,a_c), (\bar{\ell_1}), (\bar{\ell_2}), (\bar{\ell_3}), (\bar{ a_c})\}$ (see Figure~\ref{fig:3-clause}). \\
 
We then emphasize the following property:
 
\begin{claim}
 For every $c\in C_3$, at most seven clauses in ${\cal C}_c$ can be simultaneously satisfied. 
\end{claim}\label{claim:max7}
\begin{proof}
Suppose $a_c$ is False, then at most three clauses among $(\ell_1,a_c), (\ell_2,a_c), (\ell_3,a_c)$, $(\bar{\ell_1})$, $(\bar{\ell_2})$ and $(\bar{\ell_3})$ can be True simultaneously, thus at least three clauses are False. If $a_c$ is True and at least two literals among $\ell_1,\ell_2,\ell_3$ are True, then at least two clauses among $(\bar{\ell_1}), (\bar{\ell_2}), (\bar{\ell_3})$ are  False; with $(\bar {a_c})$, it makes at least three unsatisfied clauses. Finally if $a_c$ is True and at least two  literals among $\ell_1,\ell_2,\ell_3$ \-- say without loss of generality $\ell_1,\ell_2$ \-- are False,  then the clause $(\ell_1,\ell_2)$ is False and the three remaining clauses $(\ell_1,\ell_3), (\ell_2,\ell_3), (\bar{\ell_3})$ cannot be simultaneously satisfied, leaving at least three unsatisfied clauses in ${\cal C}_c$.
\end{proof}

 \begin{figure}[hbtp]
  \begin{center}
    \begin{tikzpicture}[node distance=1cm and 1cm]

\tikzstyle var2=[shape=circle]
\tikzstyle clause2=[shape=rectangle]
    \node[style = var2] (pl1) at (0,7.1) {$\scriptscriptstyle \ell_1$};
    \node[style = var2] (pbl1) at (0,5.7) {$\scriptscriptstyle \bar{\ell}_1$};
    \node[node2] (pl1') at (0,6.4) {};
   
     \node[style = clause2] (l1l2l3) at (0,8.4) {$\scriptscriptstyle (\ell_1,\ell_2,\ell_3)$};
      \node[style = clause2] (pbl1l5) at (-2.5,5) {$\scriptscriptstyle (\bar\ell_1,\ell_5)$};
      \node[style = clause2] (pl1l4) at (2.5,5) {$\scriptscriptstyle (\ell_1,\ell_4)$};
     
   \node[style = var2] (pl2) at (-1.8,9.3) {$\scriptscriptstyle \ell_2$};
    \node[style = var2] (pbl2) at (-3.2,9.3) {$\scriptscriptstyle \bar{\ell}_2$};
    \node[node2] (pl2') at (-2.5,9.3) {};

 \node[style = var2] (pl3) at (1.8,9.3) {$\scriptscriptstyle \ell_3$};
    \node[style = var2] (pbl3) at (3.2,9.3) {$\scriptscriptstyle \bar{\ell}_3$};
    \node[node2] (pl3') at (2.5,9.3) {};

 \node[style =var2] (pl5) at (-3.1,4.1) {};
    \node[style =var2] (pl4) at (3.1,4.1) {};

\node[style =var2] (pcl2) at (-2.2,10.2) {};
    \node[style =var2] (pcbl2) at (-3.9,8.4) {};

\node[style =var2] (pcl3) at (2.2,10.2) {};
    \node[style =var2] (pcbl3) at (3.9,8.4) {};
    \foreach \from/\to in {pl1/pl1',pl1'/pbl1,pl2/pl2',pl2'/pbl2,pl3/pl3',pl3'/pbl3}
\draw (\from) edge node{} (\to) ;
 
   \foreach \from/\to in {pl1/l1l2l3,l1l2l3/pl2,l1l2l3/pl2,l1l2l3/pl3,pbl1/pbl1l5,pl1l4/pl1}
\draw (\from) edge node{} (\to) ;

\foreach \from/\to in {pl4/pl1l4,pl5/pbl1l5,pl2/pcl2,pbl2/pcbl2,pl3/pcl3,pbl3/pcbl3}
\draw[dashed] (\from) -- (\to) ;

    \draw [dashed] (-8,4) -- (8,4);
 
  \node[style = var2] (ac) at (0,0) {$\scriptscriptstyle a_c$};
    \node[style = var2] (bac) at (1.4,0) {$\scriptscriptstyle \bar{a}_c$};
    \node[node2] (ac') at (.7,0) {};
     \node[style = clause2] (bacra) at (2.1,.7) {$\scriptscriptstyle (\bar{a}_c,r_c^a)$};
      \node[style = clause2] (bacbra) at (2.1,-.7) {$\scriptscriptstyle (\bar{a}_c,
      \bar{r}_c^a)$};
\node[style =var2] (ra) at (3.2,.7) {$\scriptscriptstyle r_c^a$};
\node[style =var2] (bra) at (3.2,-.7) {$\scriptscriptstyle \bar{r}_c^a$};
  \node[node2] (ra') at (3.2,0) {};
  
    \node[style = var2] (l1) at (0,-2.5) {$\scriptscriptstyle \ell_1$};
    \node[style = var2] (bl1) at (0,-3.9) {$\scriptscriptstyle \bar{\ell}_1$};
    \node[node2] (l1') at (0,-3.2) {};
     \node[style = clause2] (bl1r1) at (.7,-4.6) {$\scriptscriptstyle (\bar{\ell}_1,r_c^1)$};
      \node[style = clause2] (bl1br1) at (-.7,-4.6) {$\scriptscriptstyle (\bar{\ell}_1,
      \bar{r}_c^1)$};
\node[style =var2] (r1) at (.7,-5.6) {$\scriptscriptstyle r_c^1$};
\node[style =var2] (br1) at (-0.7,-5.6) {$\scriptscriptstyle \bar{r}_c^1$};
  \node[node2] (r1') at (0,-5.6) {};
    
   \node[style = var2] (l2) at (-2.6,2.5) {$\scriptscriptstyle \ell_2$};
    \node[style = var2] (bl2) at (-4,2.5) {$\scriptscriptstyle \bar{\ell}_2$};
    \node[node2] (l2') at (-3.3,2.5) {};
     \node[style = clause2] (bl2r2) at (-4.7,3.2) {$\scriptscriptstyle (\bar{\ell}_2,r_c^2)$};
      \node[style = clause2] (bl2br2) at (-4.7,1.8) {$\scriptscriptstyle (\bar{\ell}_2,
      \bar{r}_c^2)$};
\node[style =var2] (r2) at (-5.8,3.2) {$\scriptscriptstyle r_c^2$};
\node[style =var2] (br2) at (-5.8,1.8) {$\scriptscriptstyle \bar{r}_c^2$};
  \node[node2] (r2') at (-5.8,2.5) {};

 \node[style = var2] (l3) at (2.6,2.5) {$\scriptscriptstyle \ell_3$};
    \node[style = var2] (bl3) at (4,2.5) {$\scriptscriptstyle \bar{\ell}_3$};
    \node[node2] (l3') at (3.3,2.5) {};
     \node[style = clause2] (bl3r3) at (4.7,3.2) {$\scriptscriptstyle (\bar{\ell}_3,r_c^3)$};
      \node[style = clause2] (bl3br3) at (4.7,1.8) {$\scriptscriptstyle (\bar{\ell}_3,
      \bar{r}_c^3)$};
\node[style =var2] (r3) at (5.8,3.2) {$\scriptscriptstyle r_c^3$};
\node[style =var2] (br3) at (5.8,1.8) {$\scriptscriptstyle \bar{r}_c^3$};
  \node[node2] (r3') at (5.8,2.5) {};

 \node[style = clause2] (acl1) at (0,-1.25) {$\scriptscriptstyle (a_c,\ell_1)$};
 \node[style = clause2] (acl2) at (-1.3,1.25) {$\scriptscriptstyle (a_c,\ell_2)$};
 \node[style = clause2] (acl3) at (1.3,1.25) {$\scriptscriptstyle (a_c,\ell_3)$};
 
 \node[style = clause2] (l1l2) at (-5,-1) {$\scriptscriptstyle (\ell_1,\ell_2)$};
  \node[style = clause2] (l2l3) at (0,3.5) {$\scriptscriptstyle (\ell_2,\ell_3)$};
   \node[style = clause2] (l1l3) at (5,-1) {$\scriptscriptstyle (\ell_1,\ell_3)$};
  
    \node[style = clause2] (l1l4) at (4.5,-4.5) {$\scriptscriptstyle (\ell_1,\ell_4)$};
   \node[style = clause2] (bl1l5) at (-4.5,-4.5) {$\scriptscriptstyle (\bar\ell_1,\ell_5)$};
   
   \node[style =var2] (l5) at (-5,-5.6) {};
    \node[style =var2] (l4) at (5,-5.6) {};
    
     \node[style =var2] (cl2) at (-3,3.4) {};
    \node[style =var2] (cbl2) at (-4.7,1) {};
    
    \node[style =var2] (cl3) at (3,3.4) {};
    \node[style =var2] (cbl3) at (4.7,1) {};
    \foreach \from/\to in {ac/ac',ac'/bac,bac/bacra, bac/bacbra,bacra/ra,ra/ra',ra'/bra,bacbra/bra}
\draw (\from) edge node{} (\to) ;
     \foreach \from/\to in {l1/l1',l1'/bl1,bl1/bl1r1, bl1/bl1br1,bl1r1/r1,r1/r1',r1'/br1,bl1br1/br1}
\draw (\from) edge node{} (\to) ;
   \foreach \from/\to in {l2/l2',l2'/bl2,bl2/bl2r2, bl2/bl2br2,bl2r2/r2,r2/r2',r2'/br2,bl2br2/br2}
\draw (\from) edge node{} (\to) ;
 \foreach \from/\to in {l3/l3',l3'/bl3,bl3/bl3r3, bl3/bl3br3,bl3r3/r3,r3/r3',r3'/br3,bl3br3/br3}
\draw (\from) edge node{} (\to) ;

\foreach \from/\to in {ac/acl1,acl1/l1,ac/acl2,acl2/l2,ac/acl3,acl3/l3}
\draw (\from) edge node{} (\to) ;

\foreach \from/\to in {l1/l1l2,l1l2/l2,l2/l2l3,l2l3/l3,l1/l1l3,l1l3/l3}
\draw (\from) edge node{} (\to) ;

\foreach \from/\to in {l1/l1l4,bl1/bl1l5}
\draw (\from) edge node{} (\to) ;

\foreach \from/\to in {l4/l1l4,l5/bl1l5,l2/cl2,l3/cl3}
\draw[dashed] (\from) -- (\to) ;

\draw[dashed] (bl2) to [out=-90,in=30] (cbl2);
\draw[dashed] (bl3) to [out=-90,in=-30] (cbl3);
 \node[rectangle, minimum width=2cm] (wgphi) at (-7.1,7) { $\widetilde G_\Phi$};
  \node[rectangle, minimum width=2cm] (wgphi*) at (-7,-1) { $\widetilde {G}_{\Phi^\ast}$};

      \end{tikzpicture}
  \end{center}
  \caption{\mar{Gadget associated with the 3-clause $c=(\ell_1, \ell_2, \ell_3)$ in $\widetilde G_\Phi$ (up) and $\widetilde G_{\Phi^\ast}$ (bottom).
  The literal  $\ell_1$ appears in one 3-clause $(\ell_1,\ell_2,\ell_3)$ and one 2-clause $(\ell_1,\ell_4)$ while $\bar\ell_1$ appears in one 2-clause  $(\bar\ell_1,\ell_5)$. Black vertices correspond to $x'$-vertices for variables in $X^\ast$.}}
  \label{fig:3-clause}
\end{figure}
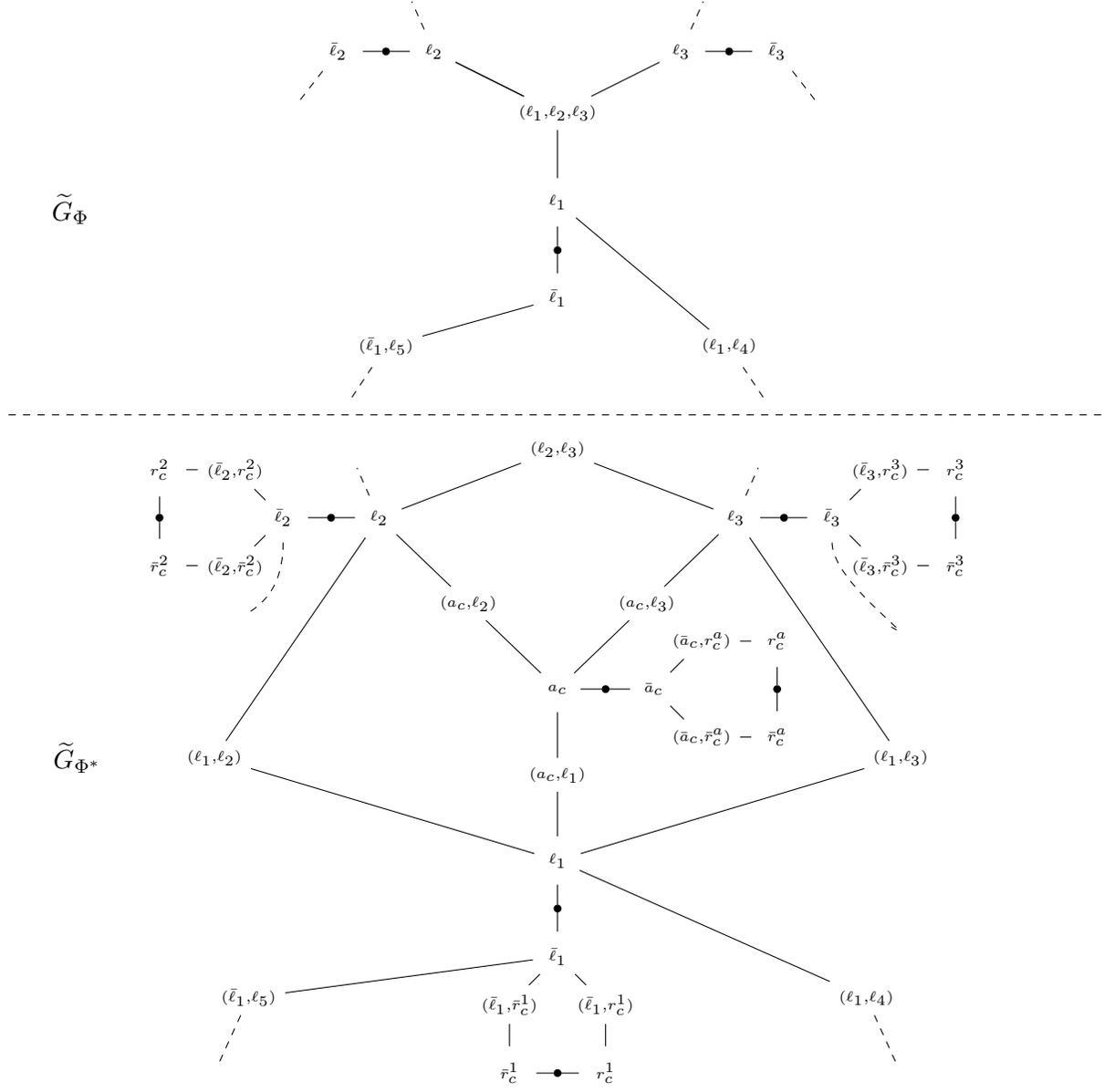


 \begin{claim}\label{claim:6or7}
  	For every $c\in C_3$, given any truth assignment of literals $\ell_1,\ell_2,\ell_3$, if $c$ is not satisfied then at most six clauses in   ${\cal C}_c$ can be simultaneously satisfied. If $c$ is satisfied, then there is a truth assignment of variable $a_c$ such that  seven clauses in  ${\cal C}_c$ are satisfied. 
 \end{claim}
\begin{proof}
Suppose $c$ is not satisfied, then either $a_c$ is True and the four clauses $(\ell_1,\ell_2)$, $(\ell_1,\ell_3)$, $(\ell_2,\ell_3)$, $(\bar{a_c})$ are unsatisfied or $a_c$ is False and the six clauses $(\ell_1,\ell_2)$, $(\ell_1,\ell_3)$, $(\ell_2,\ell_3)$, $(\ell_1,a_c)$, $(\ell_2,a_c)$, $(\ell_3,a_c)$ are unsatisfied.

Suppose now one literal \-- say $\ell_1$ \-- is True while $\ell_2,\ell_3$ are False, then choosing $a_c$ True  leaves only three unsatisfied clauses $(\bar {\ell_1}), (\ell_2,\ell_3)$ and $(\bar{a_c})$.

Suppose now that $\ell_1, \ell_2$ are True while $\ell_3$ is False, then any truth value for $a_c$ leaves only three unsatisfied clauses, $(\bar {\ell_1}), (\bar {\ell_2})$ and either $(\ell_3,a_c)$ or $(\bar{a_c})$.

Finally if the three literals $\ell_1, \ell_2, \ell_3$ are True, then choosing $a_c$ False leaves only three unsatisfied clauses, $(\bar {\ell_1}), (\bar {\ell_2})$ and $(\bar {\ell_3})$.

All other configurations are symmetrical.
\end{proof}
 
 As a consequence of Claims~\ref{claim:max7} and~\ref{claim:6or7}, any satisfied clause in $C_3$ will generate seven satisfied clauses in the new instance while unsatisfied clauses in $C$ will generate at most six satisfied clauses in the new instance. We need a similar property for 2-clauses. To this aim, we add for such a clause a system of 2-clauses on new variables  with always exactly six  \newpier{clauses} simultaneously satisfied.

~

 \underline{Case of 2-clauses}:
 
 
  For every 2-clause $c=(\ell_1, \ell_2)$, where $\ell_i$, $i=1,2$ are literals associated with variables in $X$, we introduce three  new variables $x_c,y_c,z_c$ and we add the set of eight 2-clauses ${\cal C}_c=\{(x_c,y_c), (x_c,\bar{y_c}), (x_c,z_c), (x_c,\bar{z_c}), (\bar{x_c},y_c), (\bar{x_c},\bar{y_c}), (\bar{x_c},z_c), (\bar{x_c},\bar{z_c})\}$ (see Figure~\ref{fig:H}).
 
\begin{claim}\label{claim:2clauses}
 For every $c\in C_2$, for all truth assignment\newpier{s} for variables $x_c,y_c,z_c$, exactly six clauses in ${\cal C}_c$ are satisfied.
\end{claim}
\begin{proof}
Suppose $x_c$ is True, then the four clauses including $x_c$  are True and exactly two clauses among $(\bar{x_c},y_c), (\bar{x_c},\bar{y_c}), (\bar{x_c},z_c), (\bar{x_c},\bar{z_c})$ are satisfied. Symmetrically, if $x_c$ is False we have exactly six satisfied clauses in ${\cal C}_c$.
 \end{proof}

 \begin{figure}[hbtp]
  \begin{center}
    \begin{tikzpicture}[node distance=1cm and 1cm]

\tikzstyle var2=[shape=circle]
\tikzstyle clause2=[shape=rectangle]

 \node[style = var2] (uc) at (0,0) {$\scriptstyle x_c$};
  \node[style = var2] (buc) at (3,0) {$\scriptstyle \bar x_c$};
\node[style = var2] (vc) at (0,2) {$\scriptstyle y_c$};
  \node[style = var2] (bvc) at (3,2) {$\scriptstyle \bar y_c$};
  
  \node[style = var2] (wc) at (0,-2) {$\scriptstyle z_c$};
  \node[style = var2] (bwc) at (3,-2) {$\scriptstyle \bar z_c$};

 \node[style = clause2] (ucvc) at (0,1) {$\scriptstyle (x_c,y_c)$};
  \node[style = clause2] (ucwc) at (0,-1) {$\scriptstyle ( x_c,z_c)$};
  \node[style = clause2] (bucbvc) at (3,1) {$\scriptstyle (\bar x_c,\bar y_c)$};
  \node[style = clause2] (bucbwc) at (3,-1) {$\scriptstyle ( \bar x_c,\bar z_c)$};
  \node[node2] (ucbuc) at (1.5,0) {};
  \node[node2] (vcbvc) at (1.5,2) {};
  \node[node2] (wcbwc) at (1.5,-2) {};
   \node[style = clause2] (ucbvc) at (1.5,1) {$\scriptstyle (x_c,\bar y_c)$};
   \node[style = clause2] (ucbwc) at (1.5,-1) {$\scriptstyle (x_c,\bar z_c)$};
   
\node[style = clause2] (bucvc) at (4,3) {$\scriptstyle (\bar x_c,y_c)$};
\node[style = clause2] (bucwc) at (4,-3) {$\scriptstyle (\bar x_c,z_c)$};


\foreach \from/\to in {uc/ucbuc,ucbuc/buc,vc/vcbvc,vcbvc/bvc,wc/wcbwc,wcbwc/bwc,uc/ucvc,ucvc/vc,uc/ucbvc,ucbvc/bvc,uc/ucwc,ucwc/wc,uc/ucbwc,ucbwc/bwc,buc/bucbvc,bucbvc/bvc,buc/bucbwc,bucbwc/bwc}
\draw[] (\from) -- (\to) ;

\draw (buc) to [out=45,in=-20] (bucvc);
\draw (bucvc) to [out=160,in=45] (vc);
\draw (buc) to [out=-45,in=20] (bucwc);
\draw (bucwc) to [out=-160,in=-45] (wc);
      \end{tikzpicture}
  \end{center}
  \caption{\mar{Gadget $H_c$ associated with a 2-clause $c\in C_2$.}}
  \label{fig:H}
\end{figure}
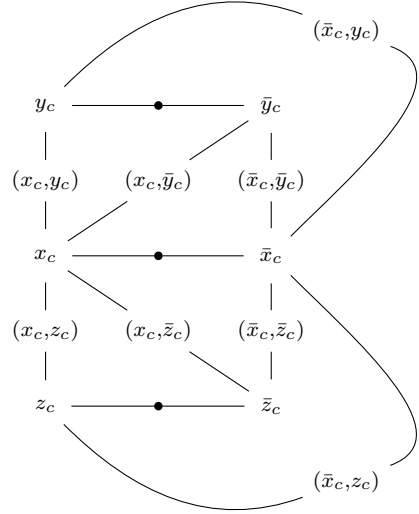


~

Denote by $X'$ the new set of variables and by $C'$ the new set of clauses, we then have:

\begin{claim}\label{claim:XX'}
	There is a truth assignment for variables in $X'$ satisfying at least $7|C|$ clauses in $C'$ if and only if there is a truth assignment for variables in $X$ satisfying all clauses in $C$. Moreover at most $7|C|$ clauses in $C'$ can be simultaneously satisfied.
\end{claim}

\begin{proof}
Suppose there is a truth assignment of variables in $X$ satisfying all clauses in $C$. Claims~\ref{claim:max7},  \ref{claim:6or7} and~\ref{claim:2clauses} ensure that there is a truth assignment of variables in $X'$ satisfying  exactly $7|C|$ clauses in $C'$. Conversely, consider a truth assignment of variables in $X'$; it defines a truth assignment of variable in $X$ since $X\subset X'$ (we keep the same values). Suppose this truth assignment does not satisfy a clause in $C$; then Claims~\ref{claim:max7},  \ref{claim:6or7} and~\ref{claim:2clauses} ensure that at most $7(|C|-1)+6=7|C|-1$ clauses of $C'$ are satisfied.
\end{proof}

To conclude the reduction we need to describe how to manage 1-clauses introduced when replacing 3-clauses in $C_3$:\\
 
\underline{Case of 1-clauses}:
 
For any 1-clause $c=(\ell)$, where $\ell$ is a literal corresponding to a variable in $X'$,  we add a new variable $r_c$ and replace the clause $(\ell)$ by the set of two 2-clauses $C_c^1=\{(\ell,r_c),(\ell,\bar{r_c})\}$. If $\ell$ is True then these two new clauses are satisfied while if   $\ell$ is False any truth alignment for $r_c$ satisfies only one of the two clauses in $C_c^1$. 
 
Let $C'_1$ be the set of 1-clauses in $C'$: 
$|C'_1|=4|C_3|$. We replace each clause in $C'_1$ by two 2-clauses as described above and denote by $X^\ast$ and $C^\ast$ the new set of variables and clauses.\\ 

\underline{The instance $(\Phi^\ast,K)$}:  
  
Putting all together, if $R$ denotes the set of variables $r_c$ added for each 1-clause $c\in C'_1$, we have:
 
\begin{equation}\label{eq:new var clauses}
 \begin{split}
 	X^\ast &= X\cup \{a_c ~|~ c\in C_3\}\cup\{u_c, v_c, w_c ~|~ c\in C_2\}\cup R\\
 C^\ast &= (\bigcup_{c\in C} C_c)\setminus C'_1 \cup \bigcup_{c\in C'_1}C_c^1
 \end{split}
\end{equation}

To finalize the reduction, we set $K=7|C|+4|C_3|$. $X^\ast, C^\ast, K$ define the instance $(\Phi^\ast,K)$. \gab{
We will show later that it is an instance of \RSP}.
 
The above discussion immediately shows:
 
\begin{claim}\label{claim:general}
 	There is a truth assignment of variables in $X$ satisfying all clauses in $C$ if and only if there is a truth assignment of variables in $X^\ast$ satisfying at least $K$ clauses in $C^\ast$.
\end{claim}
\begin{proof}

Assume first there is a truth assignment of variables in $X$ satisfying all clauses in $C$. We have seen that for each satisfied 3-clause $c\in C_3$ there is a truth assignment for the related variable $a_c$ inducing 7 satisfied clauses in $C'$. Moreover, each clause of the four related 1-clauses $c_1\in C'_1$ induces two clauses in $C^1_{c_1}$ both satisfied if $c_1$ is satisfied and only one satisfied otherwise, for any truth assignment of the related variables in $R$. Furthermore, for every satisfied  2-clause $c\in C_2$ and for all truth assignment\newpier{s} of variables $u_c, v_c, w_c$, $c$ as well as the six 2-clauses in $C_c$ are satisfied. In all it makes \ales{at least}
 $7|C|+4|C_3|=K$ satisfied clauses in $C^\ast$.
 
 Conversely, consider a truth assignment of variables in $X^\ast$ such that at least $K$ clauses are satisfied and consider the induced truth assignment for variables in $X\subset X^\ast$. If one clause $c_0\in C$ is not satisfied, then we have seen that at most $7|C|-1$ clauses in $C'$ are satisfied and even with the last $4|C_3|$ clauses in $X^\ast\setminus C'$ it would make at most $K-1$ satisfied clauses in $C^\ast$. So, all clauses in $C$ are satisfied by this truth assignment.
 \end{proof}\\
 
 Moreover Relation~\ref{eq:new var clauses} guarantees that the construction from $(X,C)$ to $(X^\ast,C^\ast,K)$ can be performed in polynomial time. So, to conclude the proof we need to show that $(X^\ast,C^\ast,K)=(\Phi^\ast,K)$ is an instance of \RSP.
\\

\underline{$(\Phi^\ast,K)$ is an instance of \RSP}:

Let us justify that every literal in $\Phi^\ast$ appears in at most four clauses. Consider first a variable $x\in X$. In $C$ each of the two relative literals $x$ and $\bar x$  appears in one 2-clause  in $C_2$: $c_{2,x}$ and  $c_{2,\bar x}$, respectively,  and either $x$ or $\bar x$ \-- say without loss of generality $x$ \-- appears in one 3-clause $c_{3,x} \in C_3$. Then, in $C^\ast$, $x$ will appear in the clause $c_{2,x}$ as well as in three 2-clauses in $C_{c_{3,x}}$ while $\bar x$ appears in three clauses of $C^\ast$:  the clause $c_{2,\bar x}$ as well as two clauses  $(\bar x, r_{(\bar x)}), (\bar x, \bar{r}_{(\bar x)})$ associated with the 1-clause $(\bar x) \in C_{c_{3,x}}$.
 
Consider now a variable $a_c, c\in C_3$: the literal $a_c$ appears in three 2-clauses of $C_c$ while $\bar {a_c}$ appears in the two clauses $(\bar{a_c}, r_{(\bar {a_c})}), (\bar{a_c}, \bar{r}_{(\bar{a_c})})$ associated with the 1-clause $(\bar{a_c})\in C_c$.
 
For every 2-clause $c\in C_2$, the literals $u_c$ and $\bar{u_c}$ both appear  in four clauses in $C_c$ while $v_c, \bar{v_c}, w_c, \bar{w_c}$ appear in only two clauses in $C_c$. Finally, each literal among $r_c$ and $\bar{r_c}$, $c\in C'_1$, corresponding to variables in $R$ appears in a single 2-clause in $C^\ast$.\\
 
To conclude the proof, \mar{note that $\widetilde {G}_{\Phi^\ast}$ is of maximum degree~5 and  we need to show that 
it} is planar. As mentioned previously since every variable appears in three clauses in $\Phi$, $\widetilde G_\Phi$ is planar. $\widetilde {G}_{\Phi^\ast}$ is obtained from $\widetilde G_\Phi$ in three steps, each preserving planarity.


\mar{First consider a  3-clause $c=(\ell_1, \ell_2, \ell_3)$ in $C$ and assume for instance that the literal $\ell_1$ appears in the 2-clause $(\ell_1,\ell_4)$ while $\bar\ell_1$ appears in the 2-clause $(\bar\ell_1,\ell_5)$. We modify $\widetilde G_\Phi$ as follows. Remove from $\widetilde G_\Phi$ the clause-vertex $(\ell_1, \ell_2, \ell_3)$ and its three incident edges towards the literal-vertices  $\ell_1$, $\ell_2$ and $\ell_3$;  add a path  $a_{c}a'_{c}\bar{a_{c}}$, new clause vertices  $(a_{c},\ell_1)$, $(a_{c},\ell_2)$, $(a_{c},\ell_3)$ and the six edges   $a_{c}(a_{c},\ell_i), (a_{c},\ell_i)\ell_i$, $i=1,2,3$.  Add the three clause-vertices  $(\ell_1,\ell_2), (\ell_1,\ell_3)$ and $(\ell_2,\ell_3)$, respectively and  add the six edges $(\ell_i,\ell_j)\ell_i,(\ell_i,\ell_j)\ell_j, 1\leq i<j\leq 3$. 
Then,  consider the  1-clauses $(\ell)\in C'_1$ added for the clause $c=(\ell_1, \ell_2, \ell_3)$: $\ell\in \{\bar\ell_1, \bar\ell_2, \bar\ell_3, \bar a_c\}$. Denote $r^i_c, i=1,2,3$ the $r$-variable associated with the 1-clause $(\bar\ell_i), i=1,2,3$ and $r^a_c$ the $r$-variable associated with the 1-clause $(\bar a_c)$. Add in $\widetilde G_\Phi$ a cycle on 6 vertices for each of these 1-clauses:, $\bar\ell_i \-- (\bar\ell_i,r^i_c) \-- r^i_c \-- r'^i_c\-- \bar r^i_c\-- (\bar\ell_i,\bar r^i_c)\--\bar\ell_i$ for $i=1,2,3$ and $\bar a_c \-- (\bar a_c,r^a_c) \-- r^a_c \-- r'^a_c\-- \bar r^a_c\-- (\bar a_c,\bar r^a_c)\--\bar a_c$.   
(see Figure~\ref{fig:3-clause}). As illustrated in the figure, this transformation preserves planarity since the added gadget is planar and all dashed edges linking it to the rest of the graph can be preserved without crossing other edges. }

 
 
 Second, for every 2-clause $c\in C_2$ we add in $\widetilde {G}_{\Phi^\ast}$  an independent copy of the planar graph $H_c$ associated with variables $x_c,y_c,z_c$  and depicted in Figure~\ref{fig:H}. All graphs $H_c, c\in C_2$ are isomorphic. They are planar as a subdivision of the graph obtained from two copies of a clique $K_4$ glued by identifying one edge.
 
 Since all these two transformations preserve planarity and since $\widetilde G_\Phi$  is planar,  $\widetilde G_{\Phi^\ast}$ is planar, which completes the proof of Proposition~\ref{prop:complex-Max 2SAT}.  
 \end{proof}

\subsection{Hardness \mar{of \MSFS} in planar graphs}
In what follows, we \mar{consider \MSFS. We recall it is defined on a mixed graph and corresponds to the case where all spread probabilities are~1.}  
Since \MSFS is a particular case of \MFS, all hardness results for the former apply to the latter.

\mar{Note first  that the computation of $\rho$ is in general $\sharp P$-hard~\cite{wang2012scalable}, but when the probability of spread is set to 1 for each edge,  
\mar{$\rho(G_H)$ is given in Lemma~\ref{lem:px}.} 
Clearly, this calculation can be performed in polynomial time. This means that, given a cut system $H$ we can check in polynomial time if it is a solution. As a consequence, contrary to \MFS, \MSFS is in NP.}

\begin{lem}\label{lem:NP}
\MSFS is in NP.
\end{lem}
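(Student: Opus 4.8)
The plan is to verify that \MSFS meets the definition of the class NP: namely, that a ``yes'' instance has a polynomially-sized certificate that can be checked in polynomial time. First I would take the certificate to be the cut system $H\subseteq E$ itself. Its size is clearly polynomial in the size of the instance, since $H$ is just a subset of the edge set. Given $H$, there are exactly two conditions to verify: that $H$ is indeed a cut system (i.e.\ $xy\in H$ implies $yx\in H$ whenever $yx\in E$), that the budget constraint $\kappa(H)\le B$ holds, and that the risk constraint $\rho(G_H)\le R$ holds.

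The first two checks are immediate: testing the cut-system closure condition requires scanning the edges of $H$ and looking up their reverses, and summing $\kappa(e)$ over $e\in H$ and comparing with $B$ is a single pass through $H$. The substantive point is the third check, the evaluation of $\rho(G_H)$. Here I would invoke Lemma~\ref{lem:px}: since all spread probabilities equal~$1$, the risk decomposes as $\rho(G_H)=\sum_{x\in V}p_x\cdot\varphi(x)$ with $p_x=1-\prod_{t\in U_{x,H}}(1-\pi_i(t))$, where $U_{x,H}$ is the set of vertices from which $x$ is reachable in $G_H$. Computing $G_H=G\setminus H$ is trivial, and for each vertex $x$ the set $U_{x,H}$ can be obtained by a graph search (e.g.\ a BFS/DFS on the reverse graph of $G_H$) in time $O(|V|+|E|)$; iterating over all $x$ and then forming the products and the final sum gives a total running time polynomial in $|V|+|E|$ and the encoding length of the probabilities and values. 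Hence $\rho(G_H)$ is computable in polynomial time, and comparing it with $R$ completes the verification.

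Therefore the whole certificate-checking procedure runs in polynomial time, so \MSFS is in NP. I do not expect a genuine obstacle here: the only place where something could go wrong is the computation of $\rho$, which for general \MFS is $\sharp P$-hard and is the reason \MFS is not known to lie in NP; but Lemma~\ref{lem:px} already resolves this for the spread-probability-one case, so the argument is essentially a direct consequence of that lemma together with the observation that the other two constraints are trivially checkable.
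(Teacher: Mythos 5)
Your proof is correct and follows exactly the paper's own argument: the cut system $H$ serves as the certificate, and Lemma~\ref{lem:px} (the spread-probability-one case) makes $\rho(G_H)$ computable in polynomial time, so the budget and risk constraints can both be verified efficiently. Your write-up merely spells out the reachability computation and the trivial checks in more detail than the paper does.
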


\mar{Let us also note that, if we allow any value for edge costs and vertex values, then a simple reduction from {\sc Partition} shows that \MSFS is NP-complete on stars:}

\begin{prop}\label{pro:partition}
\mar{\MSFS is NP-complete on stars.}
\end{prop}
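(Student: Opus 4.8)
The plan is to reduce from {\sc Partition}: given positive integers $a_1, \ldots, a_n$ with $\sum_i a_i = 2S$, decide whether there is a subset $J \subseteq \{1,\ldots,n\}$ with $\sum_{i \in J} a_i = S$. I would build a star $G = (V,E)$ with center $z$ and $n$ leaves $v_1, \ldots, v_n$, where the single undirected edge $e_i = z v_i$ has cost $\kappa(e_i) = a_i$. Since we are in \MSFS, all spread probabilities are $1$, and I would put a fire source at the center by setting $\pi_i(z) = 1$ and $\pi_i(v_i) = 0$ for every leaf (recall ignition probabilities are allowed to be $0/1$ here). For the values, set $\varphi(z) = 0$ and $\varphi(v_i) = a_i$. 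Finally set the budget $B = S$ and the risk threshold $R = S$.

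The key computation is the risk of a cut system $H$. A cut system in a star is simply a choice of a subset of edges to remove (each edge is its own undirected edge, so the ``symmetric closure'' condition is automatic). By Lemma~\ref{lem:px}, since only $z$ ignites, a leaf $v_i$ burns iff $z \to v_i$ in $G_H$, i.e.\ iff $e_i \notin H$; and $z$ always burns but contributes $\varphi(z)=0$. Hence if $H$ corresponds to removing the edges indexed by a set $J$, the burnt vertices are $\{z\} \cup \{v_i : i \notin J\}$, and
\[
\rho(G_H) = \varphi(z) + \sum_{i \notin J} \varphi(v_i) = \sum_{i \notin J} a_i = 2S - \sum_{i \in J} a_i .
\]
The budget constraint $\kappa(H) = \sum_{i \in J} a_i \le B = S$ combined with the risk constraint $\rho(G_H) = 2S - \sum_{i\in J} a_i \le R = S$ forces $\sum_{i \in J} a_i = S$. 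Conversely, any $J$ with $\sum_{i\in J} a_i = S$ yields a feasible solution. So the \MSFS instance is a yes-instance iff the {\sc Partition} instance is. Membership in NP is Lemma~\ref{lem:NP}, and the construction is clearly polynomial, so the reduction is complete.

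I do not expect a genuine obstacle here — the reduction is short and the risk formula from Lemma~\ref{lem:px} does all the work. The only points requiring a little care are: (i) checking that the ``cut system'' symmetry condition is vacuous on a star with undirected edges, so that \emph{every} subset of edges is a legal cut system; (ii) confirming that binary ($0/1$) ignition probabilities and arbitrary (i.e.\ not unit) edge costs and vertex values are permitted in the variant we are claiming hardness for, which matches the ``star, $\pi_s=1$, $\pi_i \in \{0,1\}$'' row of Table~\ref{table1}; and (iii) noting that since \MSFS is a special case of \MFS, the same construction shows \MFS is NP-complete on stars too (with the caveat that \MFS need not be in NP in general, but here the risk is polynomially computable). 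One could alternatively phrase the reduction so that $\varphi(v_i)=1$ uniformly and instead encode the $a_i$ only through the costs, but then the two-sided squeeze argument no longer pins down an exact partition, so keeping $\varphi(v_i)=a_i$ is the clean choice.
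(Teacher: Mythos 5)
Your proposal is correct and is essentially the paper's own reduction: the paper likewise reduces from {\sc Partition} via a star with center of value $0$ and ignition probability $1$, leaves of value $s_i$, edge costs $s_i$, and $B=R=S$, with the same squeeze argument forcing $\sum_{i\in J} a_i = S$, and NP membership via Lemma~\ref{lem:NP}. No differences worth noting.
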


\begin{proof}
\mar{Lemma~\ref{lem:NP} shows it is in NP.
Consider an instance of {\sc Partition} consisting of $n$ integers $s_1, \ldots, s_n$ with $\sum\limits_{i=1}^n s_i=2S$ for an integer $S$. The question is whether there is a subset $A\subset \{1, \ldots,n\}$ such that 
$\sum\limits_{i\in A} s_i=S$. {\sc Partition} is known to be NP-complete~\cite{Garey1979ComputersNP-completeness}. }

\mar{We build an instance of \MSFS as follows: consider, as $G$, a star with a center $o$ and $n$ leaves $\ell_1, \ldots,\ell_n$. The values are defined as follows: $\varphi(o)=0$ and $\varphi(\ell_i)=s_i, i=1, \ldots,n$ and the costs of edges are given by $\kappa(o\ell_i)=s_i, i=1, \ldots,n$. Finally $o$ has ignition probability~1 and other vertices have a probability of ignition~0. We choose $B=R=S$.}

\mar{With this set-up, a cut system $H$ is defined as $H=\{o\ell_i, i\in A\subset\{1, \ldots,n\}\}$, $\kappa(H)=\sum\limits_{i\in A}s_i$ and $\rho(G_H)= \sum\limits_{i\notin A}s_i$. This instance of \MSFS is a yes-instance if and only if there is a set $A\subset\{1, \ldots,n\}$ such that $\sum\limits_{i\in A}s_i\leq S$ and $\sum\limits_{i\notin A}s_i\leq S$. Since $\sum\limits_{i\notin A}s_i+\sum\limits_{i\in A}s_i=2S$, the instance of \MSFS is a yes-instance if and only if the instance of {\sc Partition} in a yes-instance. This completes the proof.}
\end{proof}


\gab{Since {\sc Partition} is weakly NP-complete, the previous reduction does not give any information about complexity when edge costs and vertex values are polynomially bounded. 
In what follows, we focus on this 
 case for the \MSFS problem. 
In Proposition~\ref{th:complete-planar}, \mar{we establish its NP-completeness in bipartite planar graphs with polynomially bounded edge costs and vertex values.} The proof is based on a polynomial time reduction from \RSP. } 

An immediate consequence of Proposition~\ref{pro:partition}  is that \MFS is NP-hard.

\begin{lem}\label{lem:basic}
Let $G=(V,E)$ be a graph consisting in two components isomorphic to $P_3$. For each $u\in V$, let $\varphi(u)=\nu$ and $\pi_i(u)=p$ if $deg(u)=1$, otherwise $\pi_i(u)=q$ if $deg(u)=2$. Let $\pi_s(e)=1$ for each  $e\in E$. Let $H_1$ be a cut system consisting of two edges from the same component and let $H_2$ be a cut system of two edges one from each component. Then $\rho(G_{H_2})< \rho(G_{H_1})$ if $0<p<\frac 2 3$, $0\leq q < 1$, and \mar{$\rho(G_{H_1}) - \rho(G_{H_2})=p(2-3p)(1-q)\nu$}.
\end{lem}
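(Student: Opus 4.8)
The plan is to use Lemma~\ref{lem:px} to compute $\rho(G_{H_1})$ and $\rho(G_{H_2})$ explicitly and then compare. First I would set up notation: label the two $P_3$ components as $a_1a_2a_3$ and $b_1b_2b_3$, where $a_2$ and $b_2$ are the degree-2 (middle) vertices with ignition probability $q$, and $a_1,a_3,b_1,b_3$ are the leaves with ignition probability $p$; all vertices have value $\nu$ and all edges have spread probability~1.

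Next I would evaluate $\rho(G_{H_1})$. A cut system $H_1$ removing both edges of one component, say $a_1a_2a_3$, leaves that component as three isolated vertices while the other component $b_1b_2b_3$ stays intact (note that removing ``two edges from the same component'' of a $P_3$ means removing both its edges). Using Lemma~\ref{lem:px}, for each isolated vertex $x$ we get $p_x = \pi_i(x)$, contributing $(2p+q)\nu$ from the shattered component; for the intact $P_3$ component, since it is connected, $U_{x,H}$ is the whole component for every $x$, so $p_x = 1-(1-p)^2(1-q)$ for all three vertices, contributing $3\bigl(1-(1-p)^2(1-q)\bigr)\nu$. Hence $\rho(G_{H_1}) = (2p+q)\nu + 3\bigl(1-(1-p)^2(1-q)\bigr)\nu$.

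Then I would evaluate $\rho(G_{H_2})$. A cut system $H_2$ with one edge from each component turns each $P_3$ into a $P_2$ (an edge) plus an isolated vertex. In each component the isolated vertex is a leaf (the edge removed is incident to a leaf, leaving the other leaf joined to the center), contributing $p\nu$; the surviving $P_2$ joins a leaf and the center, so both its vertices have burn probability $1-(1-p)(1-q)$, contributing $2\bigl(1-(1-p)(1-q)\bigr)\nu$. Since the two components are symmetric, $\rho(G_{H_2}) = 2\bigl[p + 2\bigl(1-(1-p)(1-q)\bigr)\bigr]\nu$.

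Finally I would subtract. The difference $\rho(G_{H_1})-\rho(G_{H_2})$ is a routine polynomial simplification; expanding and collecting terms should yield $p(2-3p)(1-q)\nu$, which is positive exactly when $0<p<\tfrac{2}{3}$ and $q<1$ (and this is where the stated hypotheses $0<p<\tfrac23$, $0\le q<1$ come in). The main ``obstacle'' is not conceptual but bookkeeping: one must be careful about which vertex becomes isolated and which pair stays connected in each cut system, and correctly apply the formula $p_x = 1-\prod_{t\in U_{x,H}}(1-\pi_i(t))$ componentwise, using that distinct connected components contribute independently and additively to $\rho$ via Lemma~\ref{lem:px}. Getting the algebra right in the final subtraction to land exactly on $p(2-3p)(1-q)\nu$ is the only place an error could creep in.
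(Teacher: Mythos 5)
Your proposal is correct and follows essentially the same route as the paper: apply Lemma~\ref{lem:px} componentwise to get $\rho(G_{H_1})=(2p+q)\nu+3\bigl(1-(1-p)^2(1-q)\bigr)\nu$ and $\rho(G_{H_2})=2\bigl[p+2\bigl(1-(1-p)(1-q)\bigr)\bigr]\nu$, then expand to obtain the difference $p(2-3p)(1-q)\nu$, positive for $0<p<\tfrac{2}{3}$ and $0\le q<1$. The algebra you leave as "routine" does indeed check out, so nothing is missing.
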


\begin{proof}
The initial graph $G$ and the two graphs resulting from the application of the cut systems $H_1$ and $H_2$ are depicted in Figure \ref{fig:twocutsystems}. Let us calculate the risks $\rho(G_{H_1})$ and $\rho(G_{H_2})$, \mar{as a direct application of Lemma~\ref{lem:px}}:

\begin{align*}
\rho(G_{H_1}) & = \nu(2p+q) +3\nu(1-(1-p)^2(1-q)) \\
\rho(G_{H_2}) & = 2\nu(p+2(1-(1-p)(1-q)))
\end{align*}

By dividing the risks by the node value $\nu$ and then expanding the equations, we obtain:

\begin{align*}
\frac{\rho(G_{H_1})}{\nu} & = 2p+q+3(2p+q-p^2-2pq+p^2q) = 8p+4q-3p^2-6pq+3p^2q \\
\frac{\rho(G_{H_2})}{\nu} & = 2(p+2(p+q-pq))=6p+4q-4pq
\end{align*}

The difference between the two risks, normalized by $\nu$, is then:

\begin{align*}
\frac{\rho(G_{H_1})}{\nu} - \frac{\rho(G_{H_2})}{\nu} = 2p-3p^2-2pq+3p^2q = p(2-3p)(1-q)
\end{align*}

Such a difference is strictly positive and therefore $\rho(G_{H_2}) < \rho(G_{H_1})$ when:

\begin{align*}
0 < p < \frac{2}{3}\\
0 \leq q < 1
\end{align*}

\end{proof}

\begin{figure}[tb]
  \begin{center}
    \begin{tikzpicture}[node distance=1cm and 1cm]
    \node[rectangle, minimum width=2cm] (g) {$G$};

    \node[node, label=below:$p$, label=above:$\nu$] (xl11) [right=of g]    {};
    \node[node, label=below:$q$, label=above:$\nu$] (v11)  [right=of xl11] {};
    \node[node, label=below:$p$, label=above:$\nu$] (xr11) [right=of v11]  {};

    \node[] (plus1) [right=of xr11] {};

    \node[node, label=below:$p$, label=above:$\nu$] (xl21) [right=of plus1] {};
    \node[node, label=below:$q$, label=above:$\nu$] (v21)  [right=of xl21]  {};
    \node[node, label=below:$p$, label=above:$\nu$] (xr21) [right=of v21]   {};

    \path [line] (xl11) -- node [ midway,above ] {$1$} (v11);
    \path [line] (v11) -- node [ midway,above ] {$1$} (xr11);
    \path [line] (xl21) -- node [ midway,above ] {$1$} (v21);
    \path [line] (v21) -- node [ midway,above ] {$1$} (xr21);

    \draw [dashed] (-1,-0.9) -- (10.5,-0.9);

    \node[rectangle, minimum width=2cm] (g1) [below=of g] {$G_{H_1}$};

    \node[node] (xl12) [right=of g1]   {};
    \node[node] (v12)  [right=of xl12] {};
    \node[node] (xr12) [right=of v12]  {};

    \node[] (plus2) [right=of xr12] {};

    \node[node] (xl22) [right=of plus2] {};
    \node[node] (v22)  [right=of xl22]  {};
    \node[node] (xr22) [right=of v22]   {};

    \draw[-] (xl22.east) -- (v22.west);
    \draw[-] (v22.east) -- (xr22.west);

    \draw [dashed] (-1,-2.45) -- (10.5,-2.45);

    \node[rectangle, minimum width=2cm] (g2) [below=of g1] {$G_{H_2}$};

    \node[node] (xl13) [right=of g2]   {};
    \node[node] (v13)  [right=of xl13] {};
    \node[node] (xr13) [right=of v13]  {};

    \node[] (plus3) [right=of xr13] {};

    \node[node] (xl23) [right=of plus3] {};
    \node[node] (v23)  [right=of xl23]  {};
    \node[node] (xr23) [right=of v23]   {};

    \draw[-] (v13.east) -- (xr13.west);
    \draw[-] (v23.east) -- (xr23.west);
    \end{tikzpicture}
  \end{center}
  \caption{Graphs $G$, $G_{H_1}$ and $G_{H_2}$.\mar{Vertices in $G$ (up) are indicated with their probability of ignition (below) and their value (above).}}
  \label{fig:twocutsystems}
\end{figure}
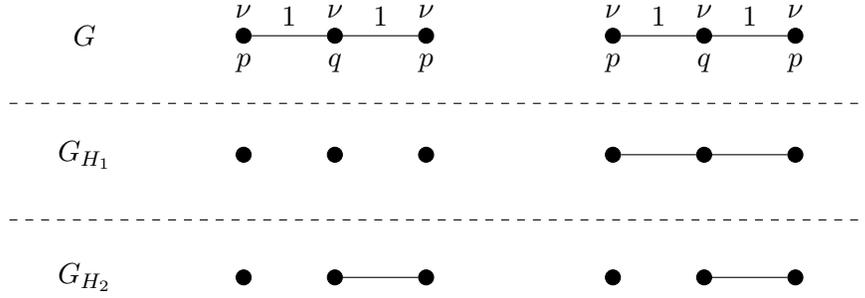

\begin{prop}\label{th:complete-planar}
\MSFS is NP-complete on bipartite planar graphs of maximum degree~5 \mar{and with polynomially bounded vertex values and edge costs}.	
\end{prop}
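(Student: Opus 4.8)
The plan is to reduce from \RSP, which we have just shown to be NP-complete in Proposition~\ref{prop:complex-Max 2SAT}. Given an instance $(\Phi,K)=(X,C,K)$ of \RSP, with $\widetilde{G}_\Phi$ planar, bipartite, and of maximum degree~5, I would build an instance of \MSFS whose underlying graph is (a modest modification of) $\widetilde{G}_\Phi$. The natural correspondence is: cutting exactly one of the two edges $xx'$, $x'\bar x$ on the variable path encodes a truth assignment for $x$ (cutting $xx'$ means $x$ is True — the literal-vertex $x$ is then separated from the part of the graph that "burns", or vice-versa); a clause-vertex $c$ is "saved" (does not burn) precisely when at least one of its incident literal-vertices has been disconnected, i.e. when the corresponding literal is satisfied. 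So placing a unit value on each clause-vertex and tuning ignition probabilities and the budget so that an optimal cut system must cut exactly one edge per variable path, one gets $\rho$ small if and only if many clauses are satisfied.

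The key steps, in order, would be: (1) Start from $\widetilde{G}_\Phi$; keep the variable paths $xx'\bar x$ and the clause-vertices. Set $\pi_s\equiv 1$ and all edge costs $\kappa\equiv 1$ so we are in the \MSFS setting with polynomially bounded costs. (2) Fix the budget $B=|X|$ so that a cut system can remove at most $|X|$ edges. (3) Choose ignition probabilities so that (i) each $x'$-vertex (the middle of a variable path) is an ignition source, forcing the solution to "isolate" literal-vertices by cutting variable-path edges, and (ii) each clause-vertex carries value $1$ while its burning probability, given the cut, is a simple monotone function of how many of its literals are satisfied — ideally $0$ if satisfied and something positive if not. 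The cleanest way is to make the $x'$-vertices burn with probability $1$ (ignition probability $1$) and give literal-vertices and clause-vertices ignition probability $0$; then by Lemma~\ref{lem:px}, a clause-vertex $c$ burns iff it is still connected in $G_H$ to some burning vertex, i.e. iff some incident literal-vertex was not cut off from its $x'$. (4) Argue the budget forces exactly one cut per variable path in any good solution: cutting both edges of a path wastes budget without separating anything new, cutting zero leaves both literals connected; here Lemma~\ref{lem:basic} is exactly the tool — it quantifies, via the $P_3\cup P_3$ gadget, that spreading the cuts across distinct components strictly reduces risk, so an optimal solution never "doubles up" on one variable path. (5) Translate: a cut system of this canonical form corresponds bijectively to a truth assignment, clause $c$ is satisfied iff its value $1$ does not contribute to $\rho$, hence $\rho(G_H)=\varphi(V)-\#\{\text{satisfied clauses}\}$ up to the fixed contribution of the $x'$-vertices and always-burning literal-vertices, so setting the risk threshold $R$ appropriately makes the \MSFS instance a yes-instance iff $\Phi$ has an assignment satisfying $\geq K$ clauses. (6) Finally check the structural claims: planarity is inherited from $\widetilde{G}_\Phi$ (any small local gadget we add must be planar and attachable without crossings — this is where one mimics the argument at the end of Proposition~\ref{prop:complex-Max 2SAT}); bipartiteness is inherited since $\widetilde{G}_\Phi$ is bipartite and our gadgets are chosen bipartite; maximum degree $5$ is inherited because each literal appears in at most four clauses plus one path-edge. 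Because edge costs are all $1$ and there are polynomially many edges, costs are polynomially bounded; vertex values are $0$ or $1$, likewise bounded.

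The main obstacle I expect is step (3)–(4): getting the ignition probabilities to behave so that the optimal cut system is forced into the canonical "one edge per variable path" form \emph{and} so that $\rho(G_H)$ reads off the number of satisfied clauses exactly (not merely monotonically), without breaking planarity/degree bounds. A literal-vertex $\bar x$ shared among several clauses is connected to all of them, so cutting $x'\bar x$ simultaneously protects all clauses containing $\bar x$ — which is exactly what we want (satisfying $\bar x$ satisfies all those clauses) — but one must be careful that a clause-vertex incident to two literal-vertices from two \emph{different} variables is saved as soon as either is cut, matching "a clause is satisfied if at least one literal is", so the correspondence is clean. The delicate accounting is ensuring the "waste" options (cutting a clause-incident edge, or cutting both path edges) are strictly dominated; Lemma~\ref{lem:basic} with $p\in(0,\tfrac23)$, $q\in[0,1)$ is presumably invoked precisely to rule out the "both edges of one $P_3$" option by a strict-inequality gap, so the numeric choice of $\pi_i$ on the $x'$-vertices and the exact value of $R$ must be pinned down to exploit that gap — that bookkeeping, rather than any conceptual difficulty, is the crux.
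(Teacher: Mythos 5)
Your high-level plan (reduce from \RSP, encode the assignment by cutting one edge per variable $P_3$, force this by budget, read off satisfied clauses from the risk) is the same skeleton as the paper's proof, but the concrete construction you sketch does not work, and the missing idea is precisely the paper's gadget. If you keep $\widetilde{G}_\Phi$ undirected and put ignition probability~$1$ on the $x'$-vertices and $0$ elsewhere, the clause semantics come out inverted: a clause vertex $c=(x,\bar y)$ is adjacent to \emph{both} literal vertices, so it burns as soon as \emph{one} of them is still connected to its burning centre; it is saved only when \emph{all} its literals are cut off --- an AND, not the OR required by clause satisfaction. Worse, fire leaks \emph{through} clause vertices: if $x$ is left uncut, the fire runs $x'\!-\!x\!-\!c\!-\!\bar y$, so a literal vertex you ``saved'' by cutting its path edge can still burn via a clause shared with an unsatisfied literal, and contamination can chain from variable to variable. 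Hence $\rho$ is not the clean affine function of the number of satisfied clauses you assert in step~(5), and your invocation of Lemma~\ref{lem:basic} also breaks down: with your literal vertices at $\pi_i=0$ the gap $p(2-3p)(1-q)\nu$ is zero (the lemma needs $0<p<\tfrac23$), and the variable $P_3$s are not disjoint components once clause vertices connect them. A further warning sign is that your instance has binary ignition probabilities and unit values/costs; the paper explicitly lists the complexity of \MSFS with binary ignition probabilities as an open problem, so a reduction this simple cannot be expected to go through.

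The paper's construction resolves exactly these issues. Clauses are \emph{not} vertices of degree~2 attached to the variable paths: each clause gets its own clause $P_3$ $l_x^c c' l_y^c$ (with its own forced cut), and each literal occurrence is monitored by a \emph{binding} vertex $b^c_{l_x}$ of value~$1$ reached by two \emph{directed} edges of cost $B+1$ --- directed so fire can flow into the binding vertex but never through it (no leakage between gadgets), and expensive so they can never be cut. Crucially, the ignition probabilities are fractional ($\tfrac12$ on literal vertices of both variable and clause $P_3$s, $q$ on the clause centre), so by Lemma~\ref{lem:px} the binding vertices burn with probabilities such as $\tfrac78$, $\tfrac34+\tfrac q4$, $\tfrac{13}8+\tfrac q8$ depending on how many literals of the clause are satisfied; the parameters $s,\nu,\omega$ and $q=1-\tfrac1{2K-1}$ are then tuned (using Lemma~\ref{lem:basic}, now applicable with $p=\tfrac12$) so that the budget forces one cut per variable $P_3$ and one per clause $P_3$, and an integrality argument converts the risk bound into ``at least $K$ clauses satisfied''. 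That gadget and the probabilistic accounting at the binding vertices are the crux your proposal is missing, not mere bookkeeping.
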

\begin{proof}
\MSFS is in NP according to Lemma~\ref{lem:NP}.


\mar{In the figures given in this proof, \ales{vertices} are labeled  with their name (unless unnecessary for a good understanding) and the related ignition probability, when this information is useful. Edges will be labeled with the related probability of spread, when useful. To simplify the figures, the values of vertices and the costs of edges are not reported but directly indicated in the text, when relevant. Finally, for a variable $z$, $l_z\in\{z, \bar z\}$ will denote a literal on $z$.}

\bigskip

Let $I=(\Phi, K)$, with $\Phi=(X,C)$, be an instance of \RSP with $n$ variables and $m$ 2-clauses. 

\bigskip

We build an instance $(\Gamma^I,\mar{\mathds{1}},\pi_i,\kappa,\varphi,B,R)$  of \MSFS, starting from $(\Phi, K)$ and the related graph $\widetilde{G}_\Phi$, as follows.




\begin{figure}[tb]
\centering
\begin{tikzpicture}[node distance=1cm and 1.8cm]
\node[node,label=$\bar{x}$, label=below:$\frac{1}{2}$]  (xl) {};
\node[node,label=$x'$, label=below:$\frac{1}{2}$]       (xc) [right=of xl] {};
\node[node,label=$x$, label=below:$\frac{1}{2}$]        (xr) [right=of xc] {};

\node[rectangle, minimum width=1cm][below=of xc](a) {(a) Variable $P_3$};
\node[rectangle, minimum width=1cm][below= 3 pt of a] {Vertices have value $\nu$ and edges cost $s$};
\path [line] (xl.east) -- node [above,midway] {$1$} (xc.west);
\path [line] (xc.east) -- node [above,midway] {$1$} (xr.west);
\end{tikzpicture}
\hspace{1cm}
\begin{tikzpicture}[node distance=1cm and 1.8cm]
\node[node, label=$l_x^c$, label=below:$\frac{1}{2}$]  (xl)               {};
\node[node,label = $c'$, label=below:$q$]               (v)  [right=of xl] {};
\node[node, label=$l_y^c$, label=below:$\frac{1}{2}$]  (xr) [right=of v]  {};
 
 \node[rectangle, minimum width=1cm][below=of v](b){(b) Clause $P_3$};
 \node[rectangle, minimum width=1cm][below= 3pt of b] {Vertices have value $\omega$ and edges cost $1$};
\path [line] (xl.east) -- node [above,midway] {$1$} (v.west);
\path [line] (v.east) -- node [above,midway] {$1$} (xr.west);
\end{tikzpicture}

\caption{\mar{Representation of (a) a variable $x$  and (b) a clause $c=(l_x^c,l_y^c)$} in an instance of \MSFS.} 
\label{fig:varclauses}
\end{figure}

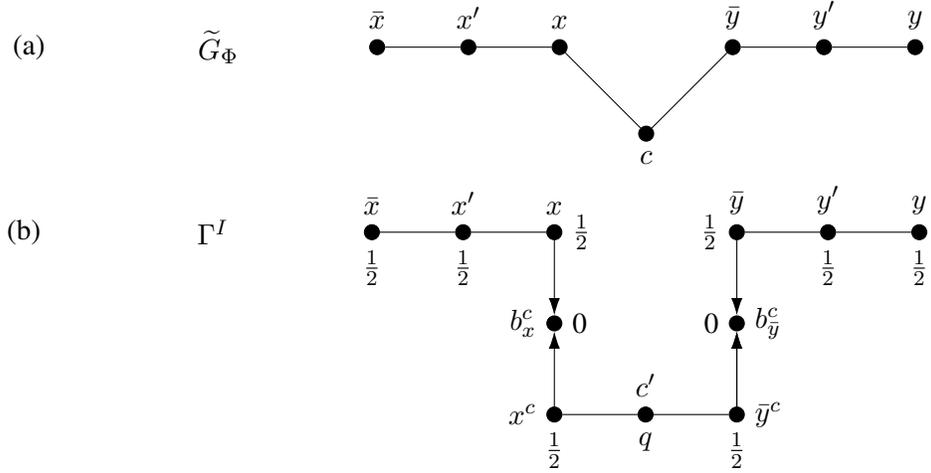
\begin{figure}[tb]
\centering
\begin{tikzpicture}

\node[rectangle, minimum width=2cm] (gtilde) [left=of xl] {$\widetilde{G}_\Phi$};
\node[rectangle, minimum width=1cm] (a) [left=of gtilde] {(a)};
\node[node, label=$\bar{x}$] (xl)               {};
\node[node, label=$x'$]      (xc) [right=of xl] {};
\node[node, label=$x$]       (xr) [right=of xc] {};
\draw[-] (xl.east) -- (xc.west);
\draw[-] (xc.east) -- (xr.west);

\node[node, label=below:$c$] (c) [below right=of xr] {};
\draw[-] (xr) -- (c);

\node[node, label=$\bar{y}$] (yl) [above right=of c] {};
\node[node, label=$y'$]      (yc) [right=of yl] {};
\node[node, label=$y$]       (yr) [right=of yc] {};
\draw[-] (yl.east) -- (yc.west);
\draw[-] (yc.east) -- (yr.west);

\draw[-] (yl) -- (c);
\end{tikzpicture}
\\
\begin{tikzpicture}
\node[rectangle, minimum width=2cm] (gammai) [left=of xl] {$\Gamma^I$};
\node[rectangle, minimum width=1cm] (b) [left=of gammai]{(b)};
\node[node, label=$\bar{x}$,label=below:$\frac{1}{2}$ ] (xl)               {};
\node[node, label=$x'$,label=below:$\frac{1}{2}$]      (xc) [right=of xl] {};
\node[node, label=$x$,label=right:$\frac{1}{2}$]       (xr) [right=of xc] {};
\draw[-] (xl.east) -- (xc.west);
\draw[-] (xc.east) -- (xr.west);

\node[node,label=left:$b_x^c$,label=right:0] (bx) [below=of xr] {};

\node[node, label=left:$x^c$,label=below:$\frac{1}{2}$] (lxc) [below=of bx]  {};
\node[node, label=above:$c'$,label=below:$q$]    (cp)  [right=of lxc] {};
\node[node, label=right:$\bar y^c$,label=below:$\frac{1}{2}$] (lyc) [right=of cp]  {};
\draw[-] (lxc.east) -- (cp.west);
\draw[-] (cp.east) -- (lyc.west);

\draw[->] (xr.south) -- (bx.north);
\draw[->] (lxc.north) -- (bx.south);

\node[node,label=right:$b_{\bar y}^c$,label=left:0] (by) [above=of lyc] {};
\draw[->] (lyc.north) -- (by.south);

\node[node, label=$\bar{y}$, label=left:$\frac{1}{2}$] (yl) [above=of by] {};
\node[node, label=$y'$,label=below:$\frac{1}{2}$]      (yc) [right=of yl] {};
\node[node, label=$y$,label=below:$\frac{1}{2}$]       (yr) [right=of yc] {};
\draw[-] (yl.east) -- (yc.west);
\draw[-] (yc.east) -- (yr.west);

\draw[->] (yl.south) -- (by.north);
\draw[->] (lyc.north) -- (by.south);

\end{tikzpicture}

\caption{Representation of clause $(x,\bar{y})$ in $\widetilde{G}_\Phi$ (a) and the corresponding $\Gamma^I$ (b).  \mar{Ignition probabilities of vertices are indicated on $\Gamma^I$. Probabilities of spread are all~1, $\varphi(b_{x}^c) =\varphi(b_{\bar y}^c)=1$ and $\kappa(xb_x^c)=\kappa(x^cb_x^c)=\kappa(\bar yb_{\bar y}^c)=\kappa(\bar y^cb_{\bar y}^c)=B+1$. All other vertex values and edge costs in $\Gamma^I$ are as in Figure~\ref{fig:varclauses}. } }
\label{fig:rspmfs}
\end{figure}

\paragraph*{Variables} \mar{In $\Gamma^I$, each variable $x$ is represented in $\widetilde{G}_\Phi$ by a path $P_3$ $xx'\bar x$} (see Figure~\ref{fig:varclauses}-a). For each vertex $u$ of \mar{these $P_3$s}, we set \mar{$\pi_i(u)=\frac 1 2$ and $\varphi(u)=\nu$}. For each edge $e$ of \mar{these $P_3$s}, we set $\pi_s(e)=1$ and $\kappa(e)=s$. As in $\widetilde{G}_\Phi$, for each variable $x\in X$, the external vertices of the corresponding 
$P_3$ represent the literals $x$ and $\bar x$.
\mar{We will refer to these paths as {\em variable $P_3$s}}.

\paragraph*{Clauses} As for the clauses, for each vertex $c$ of $\widetilde{G}_\Phi$ representing a clause $c=(l_x^c,l_y^c)\in C$, there is a \mar{path $P_3$ $l_x^cc'l_y^c$ in $\Gamma^I$} whose external vertices represent the literals $l_x^c$ and $l_y^c$ (see Figure~\ref{fig:varclauses}-b).  For each vertex $u$ of  \mar{these $P_3$s} 
we define $\varphi(u)=\omega$, whereas $\pi_i(u)=\frac 1 2$ if $u$ is an external vertex of \mar{the corresponding} $P_3$, $\pi_i(u)=q$ otherwise. For each edge $e$ of these $P_3$s, we set $\pi_s(e)=1$ and $\kappa(e)=1$.
\mar{We will denote these paths {\em clause $P_3$s}}.

\paragraph*{Connection between \mar{literals} and clauses.} Variables and clauses are then connected as follows. For each edge $cl_x$  
in $\widetilde{G}_\Phi$, \mar{where   $l_x\in \{x, \bar x\}$  
is a literal appearing} in $c$, \mar{we introduce} a \emph{binding} vertex 
\mar{$b_{l_x}^c$} in $\Gamma^I$ such that 
\mar{$\pi_i(b_{l_x}^c)=0$ and $\varphi(b_{l_x}^c) =1$}
Then, for each binding vertex $b_{l_x}^c$, \mar{with $l_x\in \{x, \bar x\}$}, there are two directed 
edges \mar{$e_1=l_x b_{l_x}^c$ and $e_2=l_x^cb_{l_x}^c$} 
in $\Gamma^I$ such that $\pi_s(e_1)=\pi_s(e_2)=1$ and $\kappa(e_1)=\kappa(e_2)=B+1$. See Figure~\ref{fig:rspmfs} for an example of 
graphs 
\mar{$\widetilde{G}_\Phi$ and $\Gamma^I$} when $C$ 
\mar{includes} only the clause $c=(x,\bar y)$. Note that all the probabilities of spread are set to 1 and then the built instance \ales{is} 
\mar{a \MSFS instance}. Note that the related graph is planar and bipartite of maximum degree~5.

\medskip

To conclude the transformation, we set $B=ns+m$ and $R=2n\nu+m\omega\left(\frac 3 2 + q\right) + m\left(\frac 7 4 + \frac q 8\right)- \frac K 8$. \mar{Note that, since $K\leq m$, we have $R>0$.}\\

\bigskip

We now show that, for certain values of the 
\mar{parameters $s,\nu,\omega$} and $q$, $I$ is satisfiable if and only if there is a cut system $H$ for $\Gamma^I$ such that $\kappa(H)\leq B$ and $\rho(\Gamma^I_H)\leq R$.

First, we choose $s=m+1$; this way, any cut system for $\Gamma^I$ can have at most $n$ edges from $P_3$s representing the variables. This is due to the choice of $B$: 
a cut system with $n+1$ cuts on the $P_3$s representing the variables has a total cost of $(n+1)s$, but this is impossible since $(n+1)s=ns+s > ns+m=B$.

Second, we choose the value of $\nu$ in such a way that $n$ cuts on the \mar{$P_3$s} representing the variables are necessary to keep the total risk below $R$.

To this end, we assume that $n_i$ variable $P_3$s have $i$ cuts for $i=0,1,2$: $n_0+n_1+n_2=n$ and the related number of cuts is $n_1+2n_2$. If at most $n$ cuts are performed on variable $P_3$s, then $n_2\leq n_0$ and then, Lemma~\ref{lem:basic}  ensures that, while $n_2> 0$, pairing one $P_3$ with two cuts with one with no cut \mar{and transferring one cut from the former to the latter}  allows to reduce the risk without changing the number of cuts used on the $n$ variable $P_3$s. 

\mar{Assume first that no more than $n-1$ cuts are performed on those $P_3$s. Then, $n_0\geq n_2+1$ and the smallest contribution to the total risk induced by vertices from those $P_3$s is obtained for $n_2=0$, with still $n_0\geq 1$.} 
Then, Lemma~\ref{lem:px} \mar{ensures that the related} contribution to the risk is $\mar{(n-n_0)\nu\left(\frac 1 2 + 2\left(1- (\frac 1 2)^2 \right)\right) = 2\nu(n-n_0)}$ for 
\mar{the $(n-n_0)$ variable $P_3$s with one cut} and a contribution of 
$\mar{3n_0\nu(1-(\frac 1 2)^3)}= \frac {21} 8 n_0\nu$ for the \mar{$n_0$} remaining uncut paths. So, in summary, \mar{since $n_0\geq 1$}, the contribution to the total risk 
\mar{induced by the variable} $P_3$s is at least  $(2(n-1)+\frac {21} 8)\cdot \nu$.

Let us choose the value of $\nu$ such that 
\begin{equation}\label{eq:nu}
\left(2(n-1)+\frac {21} 8\right)\cdot \nu \geq 2n\nu+ m\omega\left(\frac 3 2 + 1\right) + 2m.
\end{equation}

\mar{Since} $2n\nu+ m\omega(\frac 3 2 + 1) + 2m > 2n\nu+m\omega\left(\frac 3 2 + q\right) + m\left(\frac 7 4 + \frac q 8\right)- \frac K 8 = R$, Equation~\ref{eq:nu} will \mar{then} imply $(2(n-1)+\frac {21} 8)\cdot \nu>R$. 

\mar{To satisfy Equation~\ref{eq:nu} we have}:

\begin{align*}
    &&\left(2(n-1)+\frac {21} 8\right)\cdot \nu &\geq 2n\nu+ m\omega\left(\frac 3 2 + 1\right) + 2m\\
   &\Leftrightarrow & 2n\nu + \frac 5 8 \nu &\geq 2n\nu+ \frac 5 2 m\omega + 2m \\
  &\Leftrightarrow  &\frac 5 8 \nu &\geq m\left(\frac 5 2 \omega + 2\right)\\
  & \Leftrightarrow& \nu &\geq \frac 8 5 m\left(\frac 5 2 \omega + 2\right) = 4 m\left(\omega+\frac 4 5\right)
\end{align*}

By setting $\nu=4 m\left(\omega+\frac 4 5\right)$, we are 
\mar{guaranteed} that any positive solution of \MFS uses \mar{exactly} $n$ cuts on the $P_3$s representing the variables in $\Gamma^I$. 

\mar{We now need to} guarantee that the $n$ cuts are distributed exactly as one for each $P_3$. \mar{We now have $n_0=n_2$ and} 
Lemma~\ref{lem:basic} \mar{shows that the risk induced by the variable $P_3$s decreases with $n_0$}. 
\mar{If $n_0=1$, then  Lemma~\ref{lem:px} shows that this risk is} $2\nu n+\frac 1 2 \left(2 - 3 \frac 1 2\right)\left(1-\frac 1 2\right)\nu= 2n\nu +\frac \nu 8$. Similarly as we 
\mar{did} before, 
\mar{we choose $\nu$ such that this expression is greater than $R$:}
\begin{align*}
   && 2n\nu +\frac \nu 8&\geq 2n\nu+ m\omega\left(\frac 3 2 + 1\right) + 2m\\
 &\Leftrightarrow &   \frac \nu 8 &\geq m\left(\frac 5 2 \omega + 2\right)\\
 &\Leftrightarrow &   \nu &\geq 8m\left(\frac 5 2 \omega + 2\right)
\end{align*}

\mar{Since} $8m\left(\frac 5 2 \omega + 2\right)\geq 4 m\left(\omega+\frac 4 5\right)$ for each positive $\omega$, we can set:

$$\nu=8m\left(\frac 5 2 \omega + 2\right)$$

In summary, with the aforementioned choices of $s$ and $\nu$, \mar{any cut system $H$  for $\Gamma^I$ such that $\kappa(H)\leq B$ and $\rho(\Gamma^I_H)\leq R$}  uses $n$ cuts on the edges of the variables, one for each variable. \mar{The related induced cost is} 
$ns$ and 
\mar{the induced} risk is $2n\nu$.

Then, the remaining budget 
$m$ can be used for $m$ cuts on the edges of the clauses, 
\mar{to maintain} the total risk under $R-2n\nu$. 
We want to force one cut for each  clause $P_3$. 

\mar{As before, we denote $m_i$ the number of clause $P_3$s with $i$  cuts, $i=0,1,2$. We have $m_2=m_0$ and Lemma~\ref{lem:basic} guarantees that, if $m_2>0$, then we can reduce the risk by replacing one clause $P_3$ with two cuts and one with no cut by two clause $P_3$s with one cut each. Thus, the minimum risk induced by clause $P_3$s with $m_2>0$ is obtained for $m_2=m_0=1$. Using Lemma~\ref{lem:px}, the risk induced when all $m$ clause $P_3$s have a single cut is 
$m\omega(\frac 1 2 + 2(\frac 1 2+q-\frac 1 2 q))=m\omega(\frac 3 2 +q)$. Using Lemma~\ref{lem:basic}, if two clause $P_3$s with one cut are replaced by one with two cuts and one without any cut, then the risk increase is
$\frac 1 2 \left(2 - 3 \frac 1 2\right)(1-q)\omega=\frac \omega 4(1-q)$.
}

We choose the value of $\omega$ in such a way that 
\mar{$ \frac \omega 4(1-q)\geq 2m \Leftrightarrow \omega \geq 2m\frac 4 {1-q}$ since in this case we will have:}
\begin{equation}\label{eq:clause}
    m\omega\left(\frac 3 2 +q\right) + \frac \omega 4(1-q) \geq m\omega\left(\frac 3 2 +q\right)+2m> R-2n\nu
\end{equation}



Then, 
\mar{choosing} $\omega= \frac {8m} {1-q}$ (the value of $q$ 
will be defined at the end of the proof) in addition to the previous choices of $s$ and $\nu$,  \mar{we are ensured that any cut system $H$  for $\Gamma^I$ such that $\kappa(H)\leq B$ and $\rho(\Gamma^I_H)\leq R$  uses one cut per variable $P_3$ and one per clause $P_3$.}\\

Let us \mar{now assume that the instance $I$ of} \RSP is satisfiable, i.e., the instance $I$ given by $(X,C,K)$, has an assignment for the variables in $X$ such that at least $K$ clauses are satisfied. Then, we prove that there exists a cut system $H$ for $\Gamma^I$ such that $\kappa(H)\leq B$ and $\rho(\Gamma^I_H)\leq R$.

With the aforementioned choices of $s,\nu$ and $\omega$, any cut system for $\Gamma^I$ has $n$ cuts on the variable edges (one for each path representing a variable) and $m$ cuts on the clause edges (one for each path representing a clause). Note that no cut is possible for the incoming edges of any binding vertex since 
\mar{their} cost is larger than the budget $B$.

The sought cut system $H$ cuts each 
$P_3$ \mar{associated with a variable} on the side of the true literal. With this assumption, we evaluate now the induced risk on the binding vertices.

We denote with $K_i$ the number of clauses with $i$ satisfied literals, $i=0,1,2$. Then $K_1+K_2\geq K$ and $K_0+K_1+K_2=m$.

Let $\rho_i$ be the induced risk on the binding vertices, for 
\mar{all clauses} with $i$ true literals. Then the total induced risk on the binding vertices is $\rho=\rho_0+\rho_1+\rho_2$. The subgraphs of $\Gamma^I_H$ representing a clause $c=(l_x,l_y)$ for all the possible assignments to $l_x$ and $l_y$ are shown in Figures~\ref{fig:clauseff}, \ref{fig:clausevf}, and \ref{fig:clausevv}. Note that a cut on a clause 
\mar{$P_3$} can be 
\mar{on any of the two edges}. When $l_x$ and $l_y$ have the same assignment, the subgraphs resulting by cutting \mar{the clause $P_3$} \gab{either} 
\mar{on the left-hand side or on the right-side} 
are isomorphic (see Figures~\ref{fig:clauseff} and~\ref{fig:clausevv}), whereas, if $l_x$ is False and $l_y$ is True, i.e., the clause has only one satisfied literal, \mar{then} the two \mar{possible} subgraphs, \mar{depending on which edge of the clause $P_3$ is cut}, are not isomorphic (see Figure~\ref{fig:clausevf}).


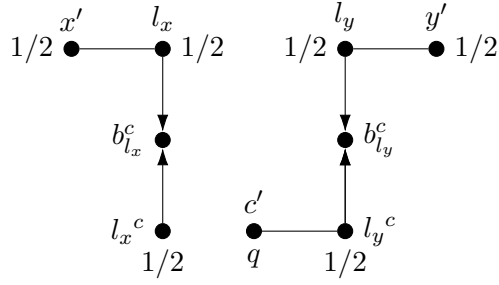
\begin{figure}[tb]
\centering
\begin{tikzpicture}
\node[node, label=$x'$, label=left:$1/2$] (xc)               {};
\node[node, label=$l_x$, label=right:$1/2$] (xr) [right=of xc] {};
\draw[-] (xc.east) -- (xr.west);

\node[node,label=left:$b_{l_x}^c$] (bx) [below=of xr] {};

\node[node, label=left:${l_x}^c$, label=below:$1/2$] (lxc) [below=of bx] {};
\node[node,label=above:$c'$, label=below:$q$]   (cp)  [right=of lxc]      {};
\node[node, label=right:${l_y}^c$, label=below:$1/2$] (lyc) [right=of cp]       {};
\draw[-] (cp.east) -- (lyc.west);

\draw[->] (xr.south) -- (bx.north);
\draw[->] (lxc.north) -- (bx.south);

\node[node,label=right:$b_{l_y}^c$] (by) [above=of lyc] {};
\draw[->] (lyc.north) -- (by.south);

\node[node, label=$l_y$, label=left:$1/2$] (yl) [above=of by] {};
\node[node, label=$y'$, label=right:$1/2$] (yc) [right=of yl] {};
\draw[-] (yl.east) -- (yc.west);

\draw[->] (yl.south) -- (by.north);
\draw[->] (lyc.north) -- (by.south);

\end{tikzpicture}
\caption{Clause $c=(l_x,l_y)$, $l_x=$ False, $l_y=$ False. \mar{Example where the clause $P_3$ is cut on the left-hand side.}}
\label{fig:clauseff}
\end{figure}

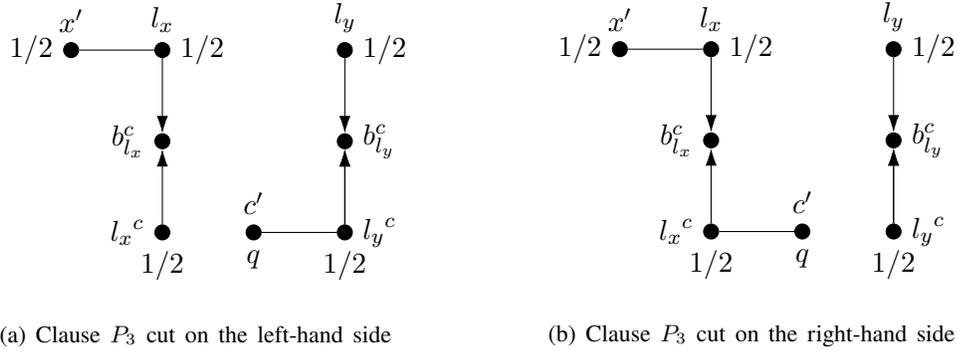
\begin{figure}[tb]
\centering
\begin{tikzpicture}
\node[node, label=$x'$, label=left:$1/2$]             (xc)               {};
\node[node, label=$l_x$, label=right:$1/2$] (xr) [right=of xc] {};
\draw[-] (xc.east) -- (xr.west);

\node[node,label=left:$b_{l_x}^c$] (bx) [below=of xr] {};

\node[node, label=left:${l_x}^c$, label=below:$1/2$] (lxc) [below=of bx]  {};
\node[node, label=above:$c'$,label=below:$q$]   (cp)  [right=of lxc] {};
\node[node, label=right:${l_y}^c$, label=below:$1/2$] (lyc) [right=of cp]  {};
\draw[-] (cp.east) -- (lyc.west);
\node[rectangle, minimum width=1cm, text width=.45\linewidth][below=of cp] {\footnotesize\mar{(a) Clause $P_3$  cut on the left-hand side}};
\draw[->] (xr.south) -- (bx.north);
\draw[->] (lxc.north) -- (bx.south);

\node[node,label=right:$b_{l_y}^c$] (by) [above=of lyc] {};
\draw[->] (lyc.north) -- (by.south);

\node[node, label=$l_y$, label=right:$1/2$] (yl)  [above=of by] {};

\draw[->] (yl.south) -- (by.north);
\draw[->] (lyc.north) -- (by.south);

\end{tikzpicture}
~
\begin{tikzpicture}
\node[node, label=$x'$, label=left:$1/2$]             (xc)               {};
\node[node, label=$l_x$, label=right:$1/2$] (xr) [right=of xc] {};
\draw[-] (xc.east) -- (xr.west);

\node[node,label=left:$b_{l_x}^c$] (bx) [below=of xr] {};

\node[node, label=left:${l_x}^c$,label=below:$1/2$] (lxc) [below=of bx]  {};
\node[node,label=above:$c'$, label=below:$q$]   (cp)  [right=of lxc] {};
\node[node,label=right:${l_y}^c$, label=below:$1/2$] (lyc) [right=of cp]  {};
\draw[-] (lxc.east) -- (cp.west);
\node[rectangle, minimum width=1cm, text width=.45\linewidth][below=of cp] {\footnotesize\mar{(b) Clause $P_3$ cut on the right-hand side}};
\draw[->] (xr.south) -- (bx.north);
\draw[->] (lxc.north) -- (bx.south);

\node[node,label=right:$b_{l_y}^c$] (by) [above=of lyc] {};
\draw[->] (lyc.north) -- (by.south);

\node[node, label=$l_y$, label=right:$1/2$] (yl)  [above=of by] {};

\draw[->] (yl.south) -- (by.north);
\draw[->] (lyc.north) -- (by.south);

\end{tikzpicture}
\caption{Clause $c=(l_x,l_y)$, $l_x=$ False, $l_y=$ True
}
\label{fig:clausevf}
\end{figure}

\begin{figure}[tb]
\centering
\begin{tikzpicture}
\node[node, label=$l_x$, label=left:$1/2$] (xr) {};

\node[node,label=left:$b_{l_x}^c$] (bx) [below=of xr] {};

\node[node,label=left:${l_x}^c$, label=below:$1/2$] (lxc) [below=of bx] {};
\node[node,label=above:$c'$, label=below:$q$]   (cp)  [right=of lxc]      {};
\node[node,label=right:${l_y}^c$, label=below:$1/2$] (lyc) [right=of cp]       {};
\draw[-] (cp.east) -- (lyc.west);

\draw[->] (xr.south) -- (bx.north);
\draw[->] (lxc.north) -- (bx.south);

\node[node,label=right:$b_{l_y}^c$] (by) [above=of lyc] {};
\draw[->] (lyc.north) -- (by.south);

\node[node, label=$l_y$, label=right:$1/2$]  (yl)  [above=of by] {};

\draw[->] (yl.south) -- (by.north);
\draw[->] (lyc.north) -- (by.south);

\end{tikzpicture}
\caption{Clause $c=(l_x,l_y)$, $l_x=$ True, $l_y=$ True.  \mar{Example where the clause $P_3$ is cut on the left-hand side.}}
\label{fig:clausevv}
\end{figure}


When $l_x$ and $l_y$ are both False (see Figure~\ref{fig:clauseff}), the risk for the binding vertex on the left is $1-(1-1/2)(1-1/2)(1-1/2)=7/8$, whereas, for the vertex on the right it is $1-(1-1/2)(1-1/2)(1-1/2)(1-q)=1-1/8+q/8=7/8+q/8$. In total, we have $7/4+q/8$. Then: 
\begin{equation}\label{eq:rho0}
    \rho_0=(7/4+q/8)\cdot K_0
\end{equation}

When $l_x$ and $l_y$ are both True (see Figure~\ref{fig:clausevv}), the risk for the binding vertex on the left is $1-(1-1/2)(1-1/2)=3/4$, whereas, for the vertex on the right it is $1-(1-1/2)(1-1/2)(1-q)=1-1/4+q/4=3/4+q/4$. In total, we have $3/2+q/4$. Then: 

\begin{equation}\label{eq:rho2}
   \rho_2=(3/2+q/4)\cdot K_2
\end{equation}

When $l_x$ is False and $l_y$ is True, we have two cases, as depicted in Figure~\ref{fig:clausevf}-a and Figure~\ref{fig:clausevf}-b. In the first case \ales{(Figure~\ref{fig:clausevf}-a)}: the risk for the binding vertex on the left is  $1-(1-1/2)(1-1/2)(1-1/2)=7/8$, whereas, for the vertex on the right it is $1-(1-1/2)(1-1/2)(1-q)=1-1/4+q/4=3/4+q/4$, and in total we have $13/8+q/4$. In the second case \ales{(Figure~\ref{fig:clausevf}-b)}: the risk for the binding vertex on the left is  $1-(1-1/2)(1-1/2)(1-1/2)(1-q)=1-1/8+q/8=7/8+q/8$, whereas, for the vertex on the right it is $1-(1-1/2)(1-1/2)=3/4$, and in total
we have $13/8+q/8$. Then, by choosing opportunely \ales{the cut that minimizes  these two values on the $P_3$ path representing the clause $(l_x,l_y)$}, we have: 

\begin{equation}\label{eq:rho1}
   \rho_1= (13/8+q/8)\cdot K_1
\end{equation}

\mar{Since} $K_0=m-(K_1+K_2)$, 
\mar{we deduce from Relations~\ref{eq:rho0}, \ref{eq:rho2}, and \ref{eq:rho1}:} 

\begin{equation}\label{eq:rhomin}
  \begin{array}{rl}
    \rho=\rho_0+\rho_1+\rho_2&= m(\frac 7 4 + \frac q 8) - (K_1+K_2)(\frac 7 4 + \frac q 8)+ (\frac{13} 8 + \frac q 8) K_1 + (\frac 3 2 + \frac q 4)K_2\\
    &= m(\frac 7 4 + \frac q 8) - \frac {K_1} 8 +(- \frac 1 4 + \frac q 8)K_2.
  \end{array}
\end{equation}

\mar{Since} $q \leq 1$, $\rho \leq m(\frac 7 4 + \frac q 8)- \frac {(K_1+K_2)} 8 \leq m(\frac 7 4 + \frac q 8)- \frac {K} 8$.

Then, by recalling that the total induced risk on variable vertices is $2n\nu$ and \mar{the one} on clause vertices is $m\omega(\frac 3 2 + q)$, we have a total risk $$\rho(\Gamma^I_H)= 2n\nu+ m\omega(\frac 3 2 + q) +\rho \leq 2n\nu+ m\omega(\frac 3 2 + q)+ m(\frac 7 4 + \frac q 8)- \frac {K} 8= R.$$

\bigskip
Conversely, let us assume that the instance $(\Gamma^I,\mathds{1},\pi_i,\kappa,\varphi,B,R)$ of \gab{\MSFS}
admits a cut system $H$ for $\Gamma^I$ such that $\kappa(H)\leq B$ and $\rho(\Gamma^I_H)\leq R$. Then we prove that the instance $I =(X,C,K)$ of \RSP, has an assignment for the variables in $X$ such that at least $K$ clauses are satisfied.


As 
\mar{already noticed, $H$ has necessarily} one cut in each $P_3$ 
representing variables in $X$ and one cut in each $P_3$ 
representing clauses in $C$. The induced risk of the vertices in variable $P_3$s is $2n\nu$, 
\mar{while} the induced risk of the vertices in clause $P_3$s is $m\omega(\frac 3 2 + q)$. 
\mar{Since} the total risk is less than $R$, the \mar{risk $\rho'$} induced 
on the binding vertices \mar{satisfies} 

\begin{equation}\label{eq:rho'sup}
\rho'\leq m\left(\frac 7 4 + \frac q 8\right) - \frac {K} 8. 
\end{equation}

By using the same notations  $K_0, K_1,K_2$ \mar{as above and since Relation~\ref{eq:rhomin} gives the minimum possible risk  for fixed $K_0,K_1,K_2$,we deduce} 

\begin{equation}\label{eq:rho'inf}
  \begin{array}{rl}
   \rho' &\geq  m(\frac 7 4 + \frac q 8) - \frac {K_1} 8 +(- \frac 2 8 + \frac q 8)K_2\\
   &\geq m(\frac 7 4 + \frac q 8) -\frac {K1+(2-q)K2} 8\\ &\geq m(\frac 7 4 + \frac q 8) -(2-q)\frac {K1+K2} 8 .
\end{array}
\end{equation}

\mar{We deduce from Relations~\ref{eq:rho'sup} and~\ref{eq:rho'inf}}
that $m(\frac 7 4 + \frac q 8) -\frac {K1+K2} 8 (2-q)\leq m(\frac 7 4 + \frac q 8)- \frac {K} 8$, which implies:

\begin{equation}\label{eq:K1+K2}
K_1+K_2\geq \frac K {2-q}.
\end{equation}



\mar{We chose: $q = 1 - \frac 1 {2K-1}$. This implies:}
$$q= 2 -\frac K {K - \frac 1 2}\Leftrightarrow \frac K {2-q}= K - \frac 1 2$$

\mar{Then, Relation~\ref{eq:K1+K2} implies $K_1+K_2\geq  K - \frac 1 2$. Since $K, K_1,K_2$ are integers, this  implies $K_1+K_2\geq  K$ and thus, $I$ is a positive instance.} 

\mar{Note finally that the edge costs in the reduction are 1 or $s$ and the vertex values  are $1, \nu$ or $\omega$. All these values are integral and polynomially bounded with respect to $n$ and $m$ and thus, to the size of the \MSFS instance. Indeed, $s=m+1$, $\omega= \frac {8m} {1-q}$ and with $q = 1 - \frac 1 {2K-1}$, we get $\omega=8m(2K-1)$. Note that $K\leq m$. Then,  $\nu=8m\left(\frac 5 2 \omega + 2\right)=20m\omega +16m$. This concludes the proof. } 

\end{proof}


\mar{The proof of Proposition~\ref{th:complete-planar} is written using different values on vertices to keep it as simple as we could. 
In what follows, we make a simple remark that if values are integral and polynomially bounded, then  a simple transformation reduces the problem to the case where all vertices have the same value~1.}

\begin{prop}\label{prop:value-reduction}
 
  \mar{For any graph class ${\cal C}$ closed under subdivision of edges, \MSFS with polynomially bounded vertex values and for graphs  in ${\cal C}$ polynomially reduces to its particular case where all vertices have value~1. The transformation preserves the maximum degree. }
\end{prop}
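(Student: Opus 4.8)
\noindent The plan is to replace every vertex $v$ with $\varphi(v)=k_v\geq 2$ by a ``bundle'' of $k_v$ value-$1$ vertices that burn in exactly the scenarios where $v$ burns, modifying the graph only by subdividing edges so that membership in ${\cal C}$ and the maximum degree are preserved for free. The trivial cases come first: an isolated vertex $v$ always contributes $\pi_i(v)\varphi(v)$ to the risk, so I keep it, set its value to $1$, and decrease $R$ by $\pi_i(v)(\varphi(v)-1)$. Every remaining high-value vertex has an incident edge; for each such $v$ I pick one incident edge $e_v=vu_v$ (in the mixed case one that is undirected or directed out of $v$) and subdivide it into a path $v-z^v_1-\dots-z^v_{k_v-1}-u_v$, giving the $z^v_j$ value $1$ and ignition probability $0$ and keeping all spread probabilities equal to $1$. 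The edges $vz^v_1,z^v_1z^v_2,\dots,z^v_{k_v-2}z^v_{k_v-1}$ are made ``uncuttable'' by assigning them cost $B+1$, while $z^v_{k_v-1}u_v$ inherits the cost $\kappa(e_v)$ and plays the role of $e_v$ in cut systems. Setting $B'=B$ and adjusting $R'$ as above, this produces an \MSFS instance $(G',\mathds{1},\pi'_i,\kappa',\varphi',B',R')$ with $\varphi'\equiv 1$.

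Two bookkeeping situations need a little care: if an edge $uv$ is chosen by both $u$ and $v$, or if a connected component is a tree all of whose vertices have value $\geq 2$, then a cheap preprocessing (subdividing the offending edge once, resp.\ one edge per such component) creates the needed room and one uses a two-sided version of the gadget, with the unique cuttable copy of $e_v$ placed between the two value-paths. I expect this to be the fiddliest part of the write-up, but it is entirely elementary.

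For correctness, note first that since uncuttable edges cost $B+1>B$, any cut system $H'$ of $G'$ with $\kappa'(H')\leq B$ uses only copies of edges of $G$ and therefore corresponds bijectively to a cut system $H$ of $G$ with $\kappa(H)=\kappa'(H')$. Fix such a pair. The only vertices of $G'$ with positive ignition probability are the original non-isolated ones, so by Lemma~\ref{lem:px} the burning probabilities in $G'_{H'}$ are governed by reachability from those vertices. The key verification is that in $G'_{H'}$ the set of original vertices reaching a gadget vertex $z^v_j$ equals $U_{v,H}$ --- the chain $vz^v_1\dots z^v_{k_v-1}$ is always present and the edge $z^v_{k_v-1}u_v$ behaves exactly as $e_v$ did, so $z^v_j$ lies on $v$'s side of every cut --- and that reachability between original vertices is the same in $G_H$ and $G'_{H'}$. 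Hence $p^{G'_{H'}}_{z^v_j}=p^{G_H}_v$ and $p^{G'_{H'}}_w=p^{G_H}_w$ for every original vertex $w$, so by Lemma~\ref{lem:px} each original $w$ together with the $k_w-1$ vertices of its bundle contributes $k_w\,p^{G_H}_w=\varphi(w)\,p^{G_H}_w$ and
\[
\rho(G'_{H'})=\sum_{w\in V(G)}\varphi(w)\,p^{G_H}_w=\rho(G_H)
\]
up to the fixed constant coming from the isolated vertices, which was folded into $R'$. Thus the two instances are equivalent; and the reduction is clearly polynomial, since $|V(G')|=\sum_v\varphi(v)$ is polynomially bounded, $G'$ is a subdivision of $G$ (so $G'\in{\cal C}$), and subdivision leaves the degree of every original vertex unchanged while the new vertices have degree $2$.

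The main obstacle I foresee is confined to genuinely mixed graphs: the gadget hangs the value-path ``upstream'' of $v$, so the path vertices burn exactly when $v$ does only when $e_v$ is undirected or points out of $v$; for a high-value vertex all of whose incident edges point into it, an incident subdivision vertex would naturally burn together with an in-neighbour instead, and a separate argument would be needed. For all the instances to which this proposition is applied here the high-value vertices carry only undirected edges, so this does not occur, and in the undirected case (or whenever each high-value vertex has a suitable incident edge) the argument above goes through with only routine edge-by-edge checking of the reachability correspondence.
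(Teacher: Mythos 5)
Your construction is, at its core, the paper's: pad each vertex $v$ with $\varphi(v)-1$ value-one, ignition-zero vertices rigidly attached to $v$ through edges of cost $B+1$, keep for every original edge one cuttable copy with its original cost, observe that feasible cut systems then correspond exactly, and conclude via Lemma~\ref{lem:px} that every padding vertex burns with the same probability as $v$. The difference is in the gadget's details: you concentrate the whole bundle on a single chosen incident edge and keep the subdivided chain oriented like that edge, whereas the paper inserts $\mu_i\geq 0$ vertices on each incident edge $vu_i$ (with $1+\sum_i\mu_i=\varphi(v)$), makes all edges inside $\{v\}\cup X(v)$ \emph{undirected} of cost $B+1$, and lets only the copy $z_i^vu_i$ retain the original cost and orientation. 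That one design choice closes the hole you flag yourself: \MSFS is posed on mixed graphs, and a high-value vertex all of whose incident edges are directed into it admits no edge that is ``undirected or directed out of $v$''; your gadget then hangs the bundle upstream, so the padding vertices burn with an in-neighbour's probability rather than $v$'s. You explicitly defer this to ``a separate argument'' and restrict attention to the instances used later in the paper, but the proposition is stated for \MSFS in general, so your proof as written does not establish it. The fix is exactly the paper's: make the internal bundle edges undirected (they are uncuttable anyway, so this changes nothing else in your argument), after which the orientation of the incident edges is irrelevant and no choice of edge is needed.

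With that modification your remaining ``bookkeeping'' cases also evaporate: in the paper's version each endpoint places its padding on its own side of the edge and the cuttable copy sits between the two bundles, so your two-sided gadget for a shared edge (and the tree-component scenario that forces it) is the default rather than an exception. On the other side of the ledger, your explicit treatment of isolated high-value vertices, absorbing $\pi_i(v)(\varphi(v)-1)$ into the threshold, covers a corner case that the paper's formula $1+\sum_i\mu_i=\varphi(v)$ cannot accommodate when $v$ has no incident edge, so that part of your write-up is the more careful one; the rest of your correspondence argument (budget forbids cutting cost-$(B+1)$ edges, reachability among original vertices unchanged, $\rho(G'_{H'})=\sum_w\varphi(w)p^{G_H}_w$) matches the paper's reasoning.
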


\begin{proof}
\mar{Consider an instance $I=(\Gamma,\mathds{1},\pi_i,\kappa,\varphi,B,R)$ of \MSFS, where $\Gamma=(V,E)\in {\cal C}$ and we assume that there is a polynomial $P$ such that $\forall v\in V, \varphi(v)\leq P(|V|)$. We build in polynomial time an instance $I'=(\Gamma',\mathds{1},\pi'_i,\kappa',\varphi_\mathds{1},B,R)$
with $\Gamma'=(V',E')\in {\cal C}$, $\varphi_\mathds{1}$ is the  constant function on $V'$ that maps any vertex to 1 and $I'$ is positive if and only if $I$ is positive. The budget and the risk threshold remain unchanged.}

\mar{For every vertex $v\in V$ of degree~$d$, we denote $u_1,\ldots,u_d$ the neighbors of $v$ and  insert $\mu_i\geq 0$ vertices on the edge $vu_i$, $i=1,\ldots,d$ such that $1+\sum\limits_{i=1,\ldots,d}\mu_i=\varphi(v)$. We denote $X(v)$ the set of new vertices; all edges between two vertices in $\{v\}\cup X(v)$ are non-directed and  have the same cost $B+1$. Vertices in $X(v)$ have an ignition probability~0 while the ignition probability of $v$ is unchanged: $\pi'_i(v)=\pi_i(v)$. The edge $vu_i$ is replaced by the edge $z_i^vu_i$, where $z_i^v$ is the  vertex inserted on $vu_i$ that is linked to $u_i$ (could be $v$ if $\mu_i=0$). If  $vu_i$ is directed, then 
$z_i^vu_i$ is directed with the same orientation. If we perform this transformation for every vertex, then we get an instance $I'$ in a graph $\Gamma'=(V',E')\in {\cal C}$, where all vertices have the same value~1. We can see $V$ and $E$ as subsets of $V'$ and $E'$, respectively and there is a one to one correspondence between edges of cost at most $B$ in $\Gamma$ and  in $\Gamma'=(V',E')$ and the cost is preserved by this correspondence. In particular,  any cut system $H\subset E$ in 
$\Gamma$ such that $\kappa(H)\leq B$ can be seen as a cut system in 
$\Gamma'$ with the same cost. It is straightforward that $\rho(\Gamma_H)=\rho(\Gamma'_H)$. Indeed,  if a vertex $v$ burns with some probability in $\Gamma_H$, then, in $\Gamma'_H$ the $\varphi(v)$ vertices in $\{v\}\cup X(v)$ will burn with the same probability. Since the transformation can be performed in polynomial time, the proof is complete.}
\end{proof}

\mar{The transformation in Proposition~\ref{prop:value-reduction} preserves planarity and the maximum degree. However, it does not necessarily  preserve bipartite graphs. Note however that bipartiteness can easily be imposed. First, we notice that multiplying all vertex values by the same number just induces multiplying the risk of any solution by this constant; so, it does not change the problem. Then, we propose a first transformation ensuring that all vertices have an odd degree. First multiply all vertex values as well as the risk threshold  by 2 to ensure all values are at least 2. If a vertex $v$ has an even degree, then add a pending vertex of value~1 and probability of ignition 0, reduce the value of $v$ by 1 and define the cost of the related edge as $B+1$. This defines an equivalent instance where all vertices have an odd degree. In addition if the maximum degree is odd, this operation does not change it, else it adds 1 to the maximum degree. Denote now by $\Delta$ the maximum degree in the new graph and multiply  all vertex values as well as the risk threshold by $2\Delta$ to obtain an equivalent instance where  each vertex has an even value greater than its degree. In this case, we can always perform the transformation in Proposition~\ref{prop:value-reduction} ensuring that all $\mu_i$s are odd. For instance, chose all $\mu_i$s but one equal to~1.  This ensures the new graph $\Gamma'$ is bipartite. Note finally that all edges in $\Gamma$ are preserved with their cost and the new edges have the cost $B+1$. Without loss of generality we can assume $B\leq \sum\limits_{e\in E}\kappa(e)$ and consequently, if all edge costs are polynomially bounded in $\Gamma$, so are they in $\Gamma'$.
Using this argument we deduce the following corollary:}

\begin{coro}\label{cor:samerisk}
\mar{\MSFS remains NP-complete in bipartite planar \gab{graphs} of maximum degree~5 when all the vertices' values are 1 and edge costs are polynomially bounded.}
\end{coro}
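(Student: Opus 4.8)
The plan is to derive Corollary~\ref{cor:samerisk} from Proposition~\ref{th:complete-planar} together with a value-normalising reduction. Proposition~\ref{th:complete-planar} already gives NP-completeness of \MSFS on bipartite planar graphs of maximum degree~$5$ with polynomially bounded vertex values and edge costs, and membership in NP for the restricted instances follows from Lemma~\ref{lem:NP}; so it suffices to reduce the polynomially-bounded-value case to the unit-value case \emph{without leaving the class}. The transformation of Proposition~\ref{prop:value-reduction} already does this while preserving planarity and the maximum degree, and its only defect is bipartiteness: it subdivides edges, and a single subdivision flips the parity of every cycle through that edge. The heart of the proof is therefore to run this value reduction in a bipartiteness-preserving way.

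First I would preprocess the instance $(\Gamma,\mathds{1},\pi_i,\kappa,\varphi,B,R)$ produced by Proposition~\ref{th:complete-planar} so that every vertex has an \emph{even} value strictly larger than its degree. Since multiplying all vertex values and the threshold $R$ by a common positive constant changes neither the answer nor polynomial boundedness, I first multiply by~$2$ so that all values are at least~$2$; then, for each vertex $v$ of even degree, I attach one pendant vertex of value~$1$ and ignition probability~$0$, decrease $\varphi(v)$ by~$1$, and set the cost of the new edge to $B+1$ so that it can never belong to a cut system. The result is an equivalent instance in which every vertex has odd degree; it is still planar and bipartite, and since odd-degree vertices are left untouched and the maximum degree was at most~$5$, it is still at most~$5$ (a degree-$4$ vertex may have turned into a degree-$5$ vertex). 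Writing $\Delta\le 5$ for this new maximum degree and multiplying all values and $R$ by $2\Delta$ yields an equivalent instance, still with integral polynomially bounded costs and values, in which each vertex has even value exceeding its degree.

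Finally I would apply the edge-subdivision construction of Proposition~\ref{prop:value-reduction}, but forcing every $\mu_i$ to be \emph{odd}. For a vertex $v$ of (odd) degree $d$ and (even) value $\varphi(v)$, the construction requires writing $\varphi(v)-1$ — an odd number that is at least $d$ — as a sum of $d$ positive integers; because $d$ is odd this is possible with all summands odd, for instance taking $d-1$ of them equal to~$1$ and the last equal to the odd number $\varphi(v)-d\ge 1$. With this choice each edge of $\Gamma$ becomes a path of odd length in the resulting graph $\Gamma'$ (two pieces of odd length, one grown at each endpoint, joined by the edge that survives from $\Gamma$), so every cycle of $\Gamma'$ is the image of a cycle of the bipartite graph $\Gamma$ with all of its edges stretched by odd amounts; an even cycle stretched this way stays even, hence $\Gamma'$ is bipartite. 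Subdivision preserves planarity, leaves the degrees of the original vertices unchanged, and creates only degree-$2$ vertices, so the maximum degree of $\Gamma'$ stays at most~$5$; by Proposition~\ref{prop:value-reduction}, the instance on $\Gamma'$ — in which every vertex has value~$1$ — has the same answer as the preprocessed instance, hence as the original \RSP instance, and its edge costs are the original (polynomially bounded) ones together with edges of cost $B+1$, which is polynomially bounded once we assume, without loss of generality, $B\le\sum_{e\in E}\kappa(e)$. This establishes the corollary. I expect the bipartiteness-preserving choice of the $\mu_i$'s — which is precisely what forces the preliminary parity-of-degree and value-inflation steps — to be the only delicate point.
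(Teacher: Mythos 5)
Your proposal is correct and takes essentially the same route as the paper: the same preprocessing (doubling values, attaching pendant vertices with edge cost $B+1$ to even-degree vertices to make all degrees odd, then rescaling by $2\Delta$ so every value is even and exceeds the degree), followed by applying Proposition~\ref{prop:value-reduction} with all $\mu_i$ odd (all but one equal to~$1$), so each edge becomes an odd-length path and bipartiteness, planarity, the degree bound~$5$ and polynomially bounded costs (via $B\leq\sum_{e\in E}\kappa(e)$) are preserved. No gaps.
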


A natural question is whether we can add the constraint that edge costs are all~1. A first answer is that replacing an edge of cost~$c$ by  $c$ parallel edges, each of cost~1,  makes the problem equivalent. The transformation preserves planarity and bipartiteness but it does not preserve low degree. In what follows, we sketch a polynomial reduction that allows to maintain the degree bounded.


\begin{prop}\label{prop:cost-reduction}
  \mar{\MSFS with polynomially bounded edge costs, all vertex values~1  and a rational probability system with a polynomial least common multiple, polynomially reduces to \MSFS with all edge costs and vertex values equal to~1 in a graph of maximum degree~4.}
\end{prop}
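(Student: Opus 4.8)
The plan is to reduce \MSFS with polynomially bounded integral edge costs (and all vertex values~1) to the case where every edge also has cost~1, while keeping the maximum degree at~4. The starting instance $I=(\Gamma,\mathds{1},\pi_i,\kappa,\varphi_\mathds{1},B,R)$ has $\Gamma=(V,E)$; write $\ell$ for the least common multiple of the denominators of the probability system, assumed polynomial in $|V|$. The core idea is the same as the naive ``replace an edge of cost $c$ by $c$ parallel unit-cost edges'' trick, except that parallel edges blow up the degree, so instead I would replace a single edge $e=uv$ of cost $c=\kappa(e)$ by a \emph{gadget} $G_e$ that (i) behaves, from the point of view of fire propagation, exactly like the edge $uv$ (so that $\rho$ is preserved for every cut system that does not touch $G_e$), (ii) has maximum internal degree~4 and attaches to $u$ and $v$ only through one new edge at each end, and (iii) has the property that the cheapest way to ``sever'' the $u$--$v$ connection inside $G_e$ using only unit-cost edges costs exactly $c$, and no partial cut of cost $<c$ disconnects it. A convenient realization is a ``ladder'' or ``series of $c$ diamonds'': chain together $c$ copies of a small $4$-vertex diamond (two internally disjoint directed $2$-paths between consecutive hub vertices), so the minimum $u$--$v$ edge cut in the gadget has size $c$ and every such cut uses exactly one edge per diamond; all internal vertices can be given ignition probability~$0$ and value~$1$, so they contribute nothing structurally that a plain edge would not. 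For directed edges one uses the directed version of the diamond; for an undirected edge $uv$ present as the pair $xy,yx$, the gadget is made symmetric. One then keeps $B$ unchanged and adjusts $R$ only by the deterministic additive contribution of the new value-$1$ vertices that always burn when $u$ burns (these are a fixed, polynomially computable quantity, since every internal vertex of $G_e$ is reachable from $u$ precisely when the gadget is not cut), i.e. set $R' = R + (\text{total number of internal gadget vertices})$ — actually more carefully, one replaces each such vertex's contribution by the probability that $u$ burns, but since that probability equals $p_u$ which is the same in both instances when the gadget is uncut, the bookkeeping is routine via Lemma~\ref{lem:px}.

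The key steps, in order, are: (1) describe the diamond gadget $G_e$ for a cost-$c$ edge, verify it has maximum degree~4 and that its minimum edge cut separating its two endpoints has size exactly $c$, with every minimal such cut containing exactly one edge from each diamond; (2) perform this substitution on every edge of $\Gamma$ simultaneously, obtaining $\Gamma'=(V',E')$ with all edge costs and vertex values~1, noting $|V'|,|E'| = O(|E|\cdot \max_e\kappa(e))$, hence polynomial, and that planarity is preserved (the diamond is planar and is inserted along the drawn edge) — though planarity is not claimed in the statement, so this is a bonus; (3) argue the correspondence between cut systems: given a cut system $H$ in $\Gamma$ with $\kappa(H)\le B$, cut, inside each gadget $G_e$ with $e\in H$, one minimal $c_e$-edge cut (cost $c_e=\kappa(e)$), giving a unit-cost cut system $H'$ in $\Gamma'$ of cost $\kappa(H)\le B$ with $\rho(\Gamma'_{H'}) = \rho(\Gamma_H) + (\text{const})$; (4) conversely, given a unit-cost cut system $H'$ in $\Gamma'$ with $|H'|\le B$, show we may assume $H'$ is ``clean'' — within each gadget it either contains a full minimal $u$--$v$ cut or contains no edge whose removal changes reachability — because any gadget edges that do not complete a separation are wasted budget and removing them from $H'$ cannot increase the risk; then $H'$ projects to a cut system $H$ in $\Gamma$ (cut $e$ iff $G_e$ is internally separated) with $\kappa(H)\le |H'|\le B$ and the same risk up to the additive constant; (5) conclude that $I$ is a yes-instance iff $I'$ is, and that the reduction is polynomial-time. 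The role of the hypothesis on the probabilities is exactly in step~(3)/(4): computing the additive shift in $R$ requires knowing the burn probabilities $p_x$ for the (polynomially many) gadget vertices exactly, which by Lemma~\ref{lem:px} is a product of rationals, representable with polynomially many bits precisely because the common multiple $\ell$ is polynomial.

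The main obstacle I anticipate is step~(4): controlling ``partial'' cuts inside a gadget. A unit-cost cut system of size $\le B$ could in principle sprinkle edges across many gadgets without fully severing any of them, and one has to show this never helps. The argument is monotonicity of $\rho$ under edge addition/removal (removing an edge of $H'$, i.e. adding it back to $\Gamma'_{H'}$, can only increase reachability sets and hence $\rho$ — and symmetrically removing a ``useless'' edge from the cut, which by definition does not change any reachability relation, leaves $\rho$ unchanged), combined with the combinatorial fact that inside a diamond chain, no set of fewer than $c$ edges disconnects the endpoints, so a ``useful'' gadget cut already costs $c=\kappa(e)$ and there is never an incentive to over-cut. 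This has to be stated carefully so that the projection $H'\mapsto H$ is well-defined and budget-respecting; once that is in place, equality of risks up to the fixed additive constant is immediate from Lemma~\ref{lem:px}, since in both graphs the burn probability of any original vertex $x$ is $1-\prod_{t\in U_{x,H}}(1-\pi_i(t))$ over the \emph{same} set $U_{x,H}$ of original vertices (the gadget internal vertices have ignition probability~$0$ and so never contribute to any such product), and the gadget vertices themselves contribute the precomputed constant. I would also double-check the degree bookkeeping at the hub vertices of the diamonds and at the attachment points $u,v$: each original vertex $v$ now has its original incident edges replaced one-for-one by single gadget-attachment edges, so its degree is unchanged, and thus if $\Gamma$ has maximum degree~$4$ already the bound is clear — but the statement asks for degree~$4$ in $\Gamma'$ regardless of the degree of $\Gamma$, so if $\Gamma$ has larger degree an additional preliminary step is needed: first reduce to maximum degree~$4$ by the standard trick of expanding a high-degree vertex into a small tree (or cycle) of degree-$3$ vertices connected by edges of cost $B+1$ (uncuttable), with ignition probability and value distributed appropriately so that fire still propagates through — this is a routine ``vertex splitting'' that preserves all the relevant quantities, and I would present it as a short lemma before the gadget construction.
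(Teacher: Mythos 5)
Your plan diverges from the paper's proof, and it has a genuine gap at exactly the point you flag as the ``main obstacle''. Because the target problem forces \emph{every} vertex to have value~1, the internal vertices of your diamond chains are not risk-neutral: they burn with a probability that depends on the chosen cut system (if $uv$ is cut inside the gadget, one part of the chain burns with probability $p_u$ and the other with $p_v$; if it is uncut, the whole chain burns with the probability of the merged component). Hence the quantity you want to add to $R$ is not a fixed constant, and since each cost-$c$ edge spawns $\Theta(c)$ value-1 vertices with $c$ up to $f(n)$, these gadget vertices can dominate the total risk rather than being negligible. Worse, your clean-up step is false as stated: a cut system in $\Gamma'$ can spend two unit edges to isolate a degree-2 internal vertex of a diamond, which \emph{does} change reachability and \emph{does} lower the risk by one full unit of value, so such edges are not ``wasted budget'', and putting them back can only increase the risk, not leave it unchanged. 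Since achievable risks in the original instance differ by quantities as small as roughly $1/f(n)$ while each stolen gadget vertex is worth a full unit, a no-instance of $I$ can plausibly map to a yes-instance of $I'$; nothing in your construction prevents this. The preliminary degree-reduction via cost-$(B+1)$ edges inherits the same defect once those edges are themselves expanded into unit-cost chains of value-1 vertices.

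The paper avoids all of this by going in the opposite direction: it blows up each \emph{vertex} $x$ into an $M\times M$ grid $Q_x$ (with $M$ polynomial but large, chosen via $M=\max(1+C\lceil\sqrt{2Rf(n)+1}\rceil,|E|f(n))$ with $C=\lceil B/2\rceil$), gives every grid vertex ignition probability $1-(1-\pi_i(x))^{1/M^2}$, replaces a cost-$c$ edge by $c$ unit joining edges spread along the grid perimeters, and sets $R'=M^2R$. The grid structure yields maximum degree~4 for free, and the $M^2$ amplification is what neutralizes cheating: with budget $B$ one can detach at most $C^2$ grid vertices in total, which is provably negligible, and the converse is completed by a convexity estimate ($\rho(\Gamma_H)\leq \rho(\Gamma''_{H''})/(M^2-2C^2)$) followed by an integrality rounding step --- this rounding is the actual role of the hypothesis that the probability system has a polynomial common denominator, not merely bit-length of the computed constants. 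Your edge-gadget approach lacks any analogous amplification or rounding mechanism, so as written the reduction does not establish the equivalence of the two instances.
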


\begin{proof}

Consider an instance $I=(\Gamma,\mathds{1},\pi_i,\kappa,\mathds{1},B,R)$ of \MSFS
with all vertex values equal to~1. We assume $\Gamma=(V,E)$ with $n=|V|$, edge costs are integers and probabilities are all rational. We also assume there is a polynomial integral function  
$f$ such that $\forall e\in E, \kappa(e)\leq f(n)$ and $\forall x\in V, f(n)\pi_i(x)\in \mathbb{N}$. This last relation ensures that for all cut system $H$ in $\Gamma$, $f(n)\rho(\Gamma_H)\in \mathbb{N}$. 
\mar{To simplify further expressions, we define $C=\left\lceil \frac{B}{2}\right\rceil$}.
The construction depends on a value $M$, polynomially bounded, chosen as follows:

\mar{
\begin{equation}\label{eq:M}
 M=\max\left(1+C\left\lceil \sqrt{2Rf(n)+1}\right\rceil,|E|f(n)\right)
\end{equation}
} 
 
We build an instance $I'=(\Gamma',\mathds{1},\pi'_i,\mathds{1},\mathds{1},B',R')$
with all edge cost equal to~1 such that $I'$ is positive if and only if $I$ is positive. In addition $\Gamma'$ has maximum degree~4. $\Gamma'$ is obtained from $\Gamma$ by replacing each vertex $x$ with a $(M\times M)$ non-directed square grid $Q_x$ with $M^2$ vertices. Edges of $Q_x, x\in V$ are called {\em $Q$-edges} in $\Gamma'$. Each edge $(x,y)$ of cost $\kappa((x,y))$ is replaced with a set $J_{(x,y)}$ of $\kappa((x,y))$ edges, each of cost~1; such edges will be called {\em joining edges} in $\Gamma'$. All edges incident to $x$ in $\Gamma$ correspond, in $\Gamma'$, to $\sum\limits_{(x,y)\in E}\kappa((x,y))$ joining edges incident to the perimeter of $Q_x$ and with extremities equally spread along this perimeter. 
\mar{Since $M\geq |E|f(n)\geq \sum\limits_{e\in E}\kappa(e)$,}  the perimeter of $Q_x$ is long enough. All vertices in 
$Q_x$ have the same ignition probability of~ $1-(1-\pi_x)^{\frac{1}{M^2}}$, where $0^{\frac{1}{M^2}}=0$.
Finally, we define $B'=B$ and $R'=M^2R$. Note that the maximum degree of $\Gamma'$ is~4 and that the construction can be performed in polynomial time.

Assume first that $I$ has a cut system $H\subset E$ such that $\kappa(H)\leq B$ and $\rho(G_H)\leq R$. We then define a cut system $H'=\bigcup\limits_{(x,y)\in H}J_{(x,y)}$ in $\Gamma'$. With the edge costs in $\Gamma$ and $\Gamma'$ we have $\kappa'(H')=\kappa(H)\leq B$.  It is straightforward to verify that $\rho(\Gamma'_{H'})= M^2\rho(\Gamma_{H})$ since $ Q_x$'s probability of burning in $\Gamma'_{H'}$ is exactly the probability that $x$ burns in $\Gamma_{H}$ and $\varphi'(Q_x)=M^2\varphi(x)$. So, $I'$ is positive.

Conversely, assume $I'$ is positive and let $H'$ be a cut system satisfying $\kappa'(H')\leq B$ and $\rho(\Gamma'_{H'})\leq M^2R$.  

For a vertex $x\in V$, a {\em connected component} of $\Gamma'_{H'}[Q_x]$ is called {\em small} if its size is at most $C^2$ and it is {\em large} instead. Relation~\ref{eq:M} implies in particular $M>B$. We establish few claims:

\begin{claim}\label{claim:Qx}
$\forall x\in V$, $\Gamma'_{H'}[Q_x]$ has exactly one large component.
\end{claim}

\begin{proof}
Since all edge costs in $I'$ are~1, we have $|H'|\leq B$.  Since $M>B$, removing $B$ edges from the $M\times M$ grid $Q_x$ allows to disconnect at most $C^2$ vertices from the rest of the grid (this maximum is obtained if removed edges disconnect a corner $C \times C$ of the grid $Q_x$). \mar{Relation~\ref{eq:M} implies $M^2 - C^2 > C^2$ and consequently the remaining vertices in $Q_x$ constitute a large component.} This concludes the proof of the claim. 
\end{proof}

The same argument allows to show: 

\begin{claim}\label{claim:small}
The total size of all small components in $\Gamma'_{H'}$ is at most $C^2$.
\end{claim}

To derive from $H'$ a cut system in $\Gamma$, we first transform $H'$ into $H''$ that only includes  joining edges:

\begin{itemize}
    \item Any joining edge in $H'$ is added  to $H''$;
    \item Any joining edge adjacent to a small component of  $\Gamma'_{H'}[Q_x]$ for some $x\in V$ is added to $H''$;
\end{itemize}

Note that a similar argument as in the previous claim shows that   the number of $Q$-edges in $H'$ is at least equal to the number of joining edges adjacent to a small component of  $\Gamma'_{H'}[Q_x]$. As a consequence, $|H''|\leq |H'|$. 

We denote $V_Q$ the set of vertices of small components in $\Gamma'_{H'}$ and consider the graph $\Gamma''=\Gamma'[V\setminus V_Q]$. 

\begin{claim}\label{claim:rhoH''}
$\rho(\Gamma''_{H''})\leq \rho(\Gamma'_{H'})$.
\end{claim}
\begin{proof}
Connected components of $\Gamma''_{H''}$ are contained in connected components of $\Gamma'_{H'}$.
\end{proof}

We now define a cut system $H$ in $\Gamma$ as follows: for every edge $e\in E$, add it in $H$ if and only if $J_e\subset H''$. It is straightforward to show that $\kappa(H)\leq |H''|$.

\mar{
\begin{claim}\label{claim:rhoH}
$\rho(\Gamma_H)\leq 
\frac{\rho(\Gamma''_{H''})}{M^2-2C^2}$
\end{claim}
}

\begin{proof}

Since all vertices have value~1, we deduce from Lemma~\ref{lem:px} that $\rho(\Gamma_H)= \sum\limits_{x\in G_H} p_x$. Consider a vertex $x\in V$ and the related vertex set $Q_x\setminus V_Q$ in  $\Gamma''$.  All vertices in $Q_x\setminus V_Q$ are connected in $\Gamma''_{H''}$ due to Claim~\ref{claim:Qx}. 

Consider an edge $(y,x)$ in $\Gamma_H$, by definition of $H$, there is at least one edge from  $Q_y\setminus V_Q$ to $Q_x\setminus V_Q$. So, 
let  $z_x\in Q_x\setminus V_Q$, $y\in U_{x,H}$ in $\Gamma$ and $z_y\in Q_y\setminus V_Q$, we have $z_y\in U_{z_x,H''}$ in $\Gamma''$. Conversely, if $z_y\in U_{z_x,H''}$ in $\Gamma''$ with $z_x\in Q_x\setminus V_Q$ and $z_y\in Q_y\setminus V_Q$, then $y\in U_{x,H}$ in $\Gamma$. 

If we call $p''_{z_x}$ the probability that $z_x\in Q_x\setminus V_Q$ burns in $\Gamma''_{H''}$, we have: 

\begin{equation*}
\begin{array}{rl}
    1-p''_{z_x}&=\prod\limits_{t\in U_{z_x,H''}}(1-\pi_i(t))\\
    &\leq \prod\limits_{y\in U_{x,H}}(1-\pi_i(y))^{\frac{M^2-C^2}{M^2}}\\
    &=(1-p_x)^{\frac{M^2-C^2}{M^2}}
    \end{array}
\end{equation*}


\mar{If $p_x<1$, we consider the real function $h: z\mapsto (1-p_x)^z=e^{z\log(1-p_x)}$. It is decreasing and convex. As a consequence, on the interval $[0,1]$, $h$ is bounded above by the linear function equal to $h(0)=1$ for $z=0$ and equal to $h(1)= (1-p_x)$ for $z=1$. So, $\forall 0\leq z\leq 1$,  $(1-p_x)^z\leq 1-zp_x$. If $p_x=1$, then the inequality also holds.}

\mar{Since, for any vertex $x\in V$ in $\Gamma$, the large component associated with $x$ in $\Gamma''$ includes at least $M^2-C^2$ vertices, we  have:}

\mar{
\begin{equation*}
\begin{array}{rl}
\rho(\Gamma''_{H''})&\geq (M^2-C^2)\sum\limits_{x\in V}\left(1-(1-p_x)^{\frac{M^2-C^2}{M^2}}\right)\\
&\geq (M^2-C^2)\sum\limits_{x\in V}\left((1-\frac{C^2}{M^2})p_x\right)\\
&\geq (M^2-C^2)(1-\frac{C^2}{M^2})\rho(\Gamma_H)\\
&\geq (M^2-2C^2)\rho(\Gamma_H)
 \end{array}
\end{equation*}
}

We deduce:

\mar{
\begin{equation*}
  \rho(\Gamma_H) \leq \frac{\rho(\Gamma''_{H''})}{M^2-2C^2}
\end{equation*}
}

which completes the proof of the claim.
\end{proof}

Using  Claim~\ref{claim:rhoH''}, Claim~\ref{claim:rhoH} and  $\rho(\Gamma'_{H'})\leq M^2R$ we deduce:

\mar{
\begin{equation}\label{eq:rhoH}
\begin{array}{rl}
  \rho(\Gamma_H)&\leq \frac{M^2R}{M^2-2C^2} \\
  &\leq  R+ \frac{2C^2R}{M^2-2C^2}
 \end{array}
\end{equation}
}

Relation~\ref{eq:M} implies 
\mar{$\frac{2C^2R}{M^2-2C^2}<\frac{1}{f(n)}$}. So, Relation~\ref{eq:rhoH} implies that $\rho(\Gamma_H)< R+\frac{1}{f(n)}$. Since $f(n)\rho(\Gamma_H)\in \mathbb{N}$, we deduce $\rho(\Gamma_H)\leq R$, and consequently $I$ is positive, which completes the proof of Proposition~\ref{prop:cost-reduction}.
\end{proof}

The transformation in Proposition~\ref{prop:cost-reduction} preserves planarity. We can easily ensure it preserves bipartiteness. \newmarc{Assume indeed that the  original graph $\Gamma=(V,E)$ is bipartite with two parts  black and white and consider for instance a black vertex $x\in V$. We then consider any black and white partition of the grid $Q_x$ and branch joining edges along the perimeter of  $Q_x$ only on black vertices. Doing it for any $x$ will ensure that $\Gamma'$ is bipartite}. 
Since \mar{$M\geq \sum\limits_{e\in E}\kappa(e)$}, the perimeter is long enough.

In the proof of Proposition~\ref{th:complete-planar}, the reduction involves a class of \MSFS instances with only three different ignition probabilities, 0, $\frac{1}{2}$ and 
$q = 1 - \frac 1 {2K-1}$, where $K$ can be chosen not greater than the number of clauses which is less than the number of vertices in the instance of \MSFS. \mar{Edge costs are either 1, $m+1$ or $(n+1)(m+1)$, where both $n$ and $m$ are less than the number of vertices in the  instance of \MSFS.   So, we can choose $f(n)=2(2K-1)(n+1)(m+1)$ that satisfies all requirements of Proposition~\ref{prop:cost-reduction}. Applying successively the reductions in   Proposition~\ref{th:complete-planar}, Corollary~\ref{cor:samerisk} and Proposition~\ref{prop:cost-reduction} from an instance $(\Phi,K)$ of \RSP allows to prove the following Theorem that constitutes the main result of this section: } 

\begin{thm}\label{theo:bipartite-planar4}
\mar{\MSFS is NP-complete in bipartite planar of maximum degree~4 and  all vertex values and edges costs are 1.}
\end{thm}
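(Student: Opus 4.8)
The plan is to derive the theorem as a pure composition of the three reductions already established, so essentially no new construction is needed. Membership in NP is immediate from Lemma~\ref{lem:NP}, as \MSFS is a special case. For the hardness part I would start from an arbitrary instance $(\Phi,K)$ of \RSP, which is NP-complete by Proposition~\ref{prop:complex-Max 2SAT}, and push it through a chain of three polynomial-time reductions, verifying at each stage that the properties required in the statement --- planarity, bipartiteness, maximum degree~$4$, unit vertex values and unit edge costs --- are either already in place or are re-established by the subsequent step.

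\emph{Step one.} Apply the construction of Proposition~\ref{th:complete-planar} to $(\Phi,K)$. This yields a \MSFS instance $(\Gamma^I,\mathds{1},\pi_i,\kappa,\varphi,B,R)$ on a bipartite planar graph of maximum degree~$5$ whose vertex values ($1$, $\nu$ or $\omega$) and edge costs ($1$, $m+1$ or $(n+1)(m+1)$) are integral and polynomially bounded in $n$ and $m$, and whose ignition probabilities take only the three values $0$, $\frac{1}{2}$ and $q = 1 - \frac{1}{2K-1}$ with $K\le m$. \emph{Step two.} Apply Corollary~\ref{cor:samerisk} --- that is, the value-reduction of Proposition~\ref{prop:value-reduction} together with the bipartiteness-restoring modification described after it --- to collapse all vertex values to~$1$. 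All vertices inserted by this step have ignition probability~$0$, so the probability set remains $\{0,\frac{1}{2},q\}$; the maximum degree stays at most~$5$ (it is odd, so the odd-degree padding does not increase it), the graph remains bipartite and planar, and edge costs remain polynomially bounded. \emph{Step three.} Apply Proposition~\ref{prop:cost-reduction}. Its hypotheses are met: edge costs are polynomially bounded, all vertex values are~$1$, and the probability system is rational with least common multiple of denominators equal to $2(2K-1)$, which is polynomial since $K$ is at most the number of clauses, hence at most the instance size; concretely one may take $f(n)=2(2K-1)(n+1)(m+1)$. The output is a \MSFS instance with all edge costs and all vertex values equal to~$1$, on a planar graph of maximum degree~$4$, and bipartite by the remark following Proposition~\ref{prop:cost-reduction} (branch all joining edges onto a single colour class of each grid $Q_x$).

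Composing the three reductions gives a single polynomial-time reduction from \RSP to the restricted version of \MSFS in the statement, and composing the three yes/no equivalences shows it is correct; this proves NP-hardness, and with Lemma~\ref{lem:NP} we obtain NP-completeness. I do not expect a real obstacle, since all the difficulty lies in the three prior results; the only point that needs genuine care is the joint bookkeeping of invariants along the composition --- in particular that bipartiteness survives the value-reduction of step two, that the maximum degree stays at~$5$ until step three brings it down to~$4$, and that the ``rational probabilities with polynomially bounded common denominator'' hypothesis needed by Proposition~\ref{prop:cost-reduction} is preserved through steps one and two.
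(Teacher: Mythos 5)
Your proposal is correct and matches the paper's own proof essentially verbatim: the theorem is obtained by composing Proposition~\ref{th:complete-planar}, Corollary~\ref{cor:samerisk} and Proposition~\ref{prop:cost-reduction}, with the same choice $f(n)=2(2K-1)(n+1)(m+1)$ justified by the ignition probabilities $\{0,\tfrac12,1-\tfrac1{2K-1}\}$ and edge costs $\{1,m+1,(n+1)(m+1)\}$ arising in the first reduction. Your bookkeeping of the invariants (bipartiteness, odd maximum degree~5 surviving the value reduction, polynomially bounded costs) is exactly the argument the paper relies on.
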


Note that, in Proposition~\ref{prop:cost-reduction} the probability system $\pi_i'$ 
\newgab{for the \MSFS with all edge costs and vertex values equal to 1, in a graph of maximum degree 4,}
is not necessarily rational neither polynomially bounded. 
We let as an  open problem the complexity of \MSFS in bipartite planar graphs with all vertex values and edges costs equal to 1 and rational and polynomially bounded ignition probabilities. 

\newmar{We conclude this section with a remark about the complexity in grid graphs that are natural when considering an homogeneous landscape divided into regular square-like areas. A planar graph of maximum degree~4 can be embedded in polynomial time into a grid~\cite{embed} in such a way that vertices map to vertices in the grid and edges map to non-crossing paths in the grid. We can also easily ensure that the embedding is a subgraph of the grid, also called a {\em subgrid}. In other words, there is a subdivision of the original graph that is a subgrid and the construction can be performed in polynomial time. If the new vertices (produced in the subdivision) are all of value 0 and all edges have the same cost~1, then \MSFS in this subgrid is equivalent to \MSFS in the original graph. Now, if we consider any grid that contains the subgrid as a subgraph, assign the value~1 and the ignition probability $\pi_i=0$ to the added vertices and the cost~0 to the added edges, then \MSFS in this grid is equivalent to \MSFS in the subgrid since we can cut any edge of value~0 without changing the total cost. As a result, we immediately deduce: }

\begin{prop}\label{pro:grid}
\MSFS is NP-complete if:
  \begin{itemize}
      \item the graph is a subgrid with binary vertex values and unitary edge costs;
      \item the graph is a grid with binary vertex values and edge costs. 
  \end{itemize}
\end{prop}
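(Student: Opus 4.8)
I would first observe that \MSFS is in NP by Lemma~\ref{lem:NP}, and hence so is its restriction to (sub)grids; thus only hardness has to be established. The plan is to reduce from the restriction of \MSFS shown NP-complete in Theorem~\ref{theo:bipartite-planar4}, i.e., on planar graphs of maximum degree~4 with all vertex values and all edge costs equal to~1. The one external ingredient is the classical grid-embedding result~\cite{embed}: any planar graph $G=(V,E)$ of maximum degree~4 can, in polynomial time, be drawn on a grid of polynomial size so that the vertices of $G$ occupy distinct grid vertices and the edges of $G$ are realized by pairwise edge-disjoint grid paths. Discarding the unused grid edges and vertices yields a subgrid $G'$ that is a subdivision of $G$; write $P_e$ for the path of $G'$ realizing $e\in E$ (orienting $P_e$ according to $e$ when $e$ is directed), so that the vertices created by the subdivision are exactly the internal vertices of the $P_e$'s.

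For the first item I would turn $G'$ into a \MSFS instance as follows: every original vertex keeps its value ($=1$) and its ignition probability; every subdivision vertex receives value~$0$ and ignition probability~$0$; every edge receives cost~$1$; the budget $B$ and the threshold $R$ are left unchanged. Two observations make this instance equivalent to the original one. First, cutting any single edge of $P_e$ destroys $P_e$ as a path, so there is a correspondence between cut systems: $H\subseteq E$ with $|H|\le B$ lifts to the cut system of $G'$ that picks one edge of $P_e$ for each $e\in H$ (of the same cardinality, by edge-disjointness of the $P_e$'s), while a cut system $H'$ of $G'$ projects to $H=\{e\in E:\ H'\cap P_e\neq\emptyset\}$ with $|H|\le|H'|$. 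Second, by Lemma~\ref{lem:px} the probability that an original vertex $x$ burns depends only on the original vertices that can reach it (a subdivision vertex, having ignition probability~$0$, contributes the factor~$1$), and in either direction those are exactly the vertices of $U_{x,H}$ in $G$; since subdivision vertices also carry value~$0$, we get $\rho(G'_{H'})=\rho(G_H)$ in both directions. Hence $G'$ is a yes-instance iff $G$ is, with vertex values in $\{0,1\}$ and unit edge costs, which proves the first item.

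For the second item I would embed $G'$ into a genuine grid $\widehat G$ (for instance its bounding box), keep all the data of $G'$ unchanged on $G'$, assign value~$1$ and ignition probability~$0$ to every vertex of $\widehat G$ outside $G'$, and cost~$0$ to every edge of $\widehat G$ outside $G'$ (all remaining edges keep cost~$1$); $B$ and $R$ are unchanged. Since deleting edges can only shrink the sets of vertices reaching a given vertex, Lemma~\ref{lem:px} shows that $\rho$ is non-increasing under edge removal; therefore in any feasible cut system for $\widehat G$ we may freely add all the cost-$0$ edges, after which every added vertex becomes isolated and (having ignition probability~$0$) burns with probability~$0$, so the problem reduces to the instance on $G'$; conversely a feasible cut system for $G'$ together with all cost-$0$ edges of $\widehat G$ is feasible for $\widehat G$ with the same risk. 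This makes the grid instance equivalent to the subgrid instance of the first item, with vertex values and edge costs in $\{0,1\}$. Both constructions run in polynomial time, the embedding of~\cite{embed} being of polynomial size. There is no deep obstacle here; the two points that need care are checking that the embedding can indeed be cleaned into a polynomial-size subgrid that is a subdivision of $G$, and the bookkeeping of the burning probabilities across the subdivision and across the added free edges — both entirely controlled by the closed formula of Lemma~\ref{lem:px}.
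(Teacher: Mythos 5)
Your proposal is correct and follows essentially the same route as the paper: reduce from Theorem~\ref{theo:bipartite-planar4} via the grid embedding of~\cite{embed}, give the subdivision vertices value~$0$ (ignition~$0$) with unit edge costs for the subgrid case, and then pad to a full grid with value-$1$, ignition-$0$ vertices and cost-$0$ edges, using the monotonicity of the risk from Lemma~\ref{lem:px}. The only cosmetic point is that you should invoke internally vertex-disjoint (non-crossing) paths rather than merely edge-disjoint ones so that the cleaned embedding is genuinely a subdivision, which is exactly what the cited embedding provides and what your argument already relies on.
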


The case with binary ignition probabilities would be as well an interesting open case. In the next section, we identify a polynomial problem for this case.

\section{\MSFS in Trees}
\label{sec:polynomially}

\mar{The hardness results in the previous section motivate the question of identifying some polynomial cases for \MFS. Since 
\MSFS revealed to be hard in restricted cases and since the complexity of 
\MSFS with binary ignition probabilities is still open, it seemed to us relevant to start with this cases. 
We propose a polynomial time algorithm in trees with vertex values and edge cost equal to~1 and ignition probabilities are binary.}


As seen in Proposition~\ref{pro:partition}, \MSFS is hard on trees (even on stars) if edge costs and vertex values can be any integer. 

In this section, we present a polynomial algorithm that solves \mar{\MSFS} with all edge costs and vertex values equal to~1 
in a general tree with a subset of burning vertices. 
The algorithm outputs the maximum number of savable vertices and the corresponding cut system.

Given a tree $T=(V,E)$, we consider an instance $(T,\pi_s,\pi_i,\kappa,\varphi,B)$ of the \MFS problem \mar{(optimization version)} where:
\begin{itemize}
    \item $\pi_s(e)=1$, for each $e\in E$ \mar{(this is an instance of \MSFS);}
    \item $\pi_i(v)=1$, for $v \in V' \subseteq V$;
    and $\pi_i(v)=0$ for $ v \in V \setminus V'$;
    \item $\kappa(e) = 1$, for each $e\in E$;
    \item $\varphi(v)=1$, for each $v\in V$;
    \item a given budget $B$. 
\end{itemize}

We devise a polynomial time algorithm that computes a {\em cut system} $H\subset E$ such that $\kappa(H)\leq B$ 
 minimizing the risk, which is equivalent to 
maximizing the number of saved vertices.

Given an instance $(T,\pi_s=\mathds{1},\pi_i,\kappa,\varphi,B)$ of \mar{the optimization version of} \MSFS, where $T$ is a tree,
we choose a vertex as the root, \mar{we orient edges from the root to the leaves}, for every vertex, we define an order of its children  and then we number the vertices $v_0, \ldots,v_{|V|-1}$ in post order. 
Let $T_i$ be the subtree rooted in the vertex $v_i$ \mar{that includes only vertices that are descendant of $v_i$ in $T$}. By property of the post order, if $T_i$ is a subtree of $T_j$, then $i < j$. Given a cut system $H$, we denote with $\pi_o(v)$  the probability, in $G_H$, that a vertex burns in the final setting. $\pi_o(v)=0$ if the vertex $v$ does not burn in the solution or $\pi_o(v)=1$ if the vertex $v$ burns in the final setting.

\textbf{Input}: A tree $T$, a set of vertices $V' \subset V$  such that $\pi_i(v)=1$ for $v\in V'$, $\pi_i(v)=0$ for $v\notin V'$ and a budget $B$.

\textbf{Output}: A {\em cut system} $H\subset E$ 
of cost at most the given budget $B$ and maximizing the number of vertices $v$ such that $\pi_o(v)=0$. 

%
%

\subsection{Algorithm Description}\label{subsec:algo_descript}

The algorithm 
\mar{computes an} optimal solution \mar{using a double dynamic programming process}. 
\mar{The main dynamic programming process} computes an optimal solution 
for each subtree starting from the leaves and following a post-order visit. \mar{At each step, the algorithm} computes an optimal solution for a subtree $T_i$ \mar{using solutions for the subtrees induced by the children of $v_i$} 
(already computed due to the post-order visit), \mar{using a second dynamic programming process}. The procedure continues until 
an optimal solution is computed for the whole tree.

In a more formal way, given the tree $T=(V,E)$ and a vertex $r$ as the root, the algorithm visits and numbers the vertices in post-order from $v_0$ to $v_{|V|-1}$ ($v_{|V|-1} \equiv r$). 
Given a vertex $v_i$, we denote with $v_{i_j}$ the $j$-th children of $v_i$ (in the chosen ordering of children of each vertex) and, by \mar{definition} of the post-order, if $j < j'$, then  $i_j < i_{j'}<i$ .


The algorithm then builds two tables (see Figure~\ref{fig:treeAlgov2}). For both tables, rows and columns are numbered starting from zero. 

~

{\underline{\mar{Table $A$:}}}

Table $A$ (\mar{main dynamic programming process}) has $|V|$ rows and $B+1$ columns. It contains, for each row $i$ and each budget $b \in 
\mar{\{0, \ldots, B\}}$, 
the number of vertices of $T_i$ that can be saved \mar{for} two different scenarios \-- if $v_i$ burns and 
if $v_i$ does not burn \-- in the final setting, and, for each case, a  
corresponding cut system of $b$ edges in $T_i$. If $v_i\in V'$, then only the case where $v_i$ burns is taken into account. 
\pier{\mar{So}, every 
\mar{entry} in the table is a 
\mar{4-tuple} $A_{i,b}=(f^+,f^-,H^+,H^-)$ related to the subtree $T_i$, rooted in $v_i$ \mar{and for a} given budget $b$. 
$f^+$ is the optimal 
\mar{value} when vertex $v_i$ burns (i.e., $\pi_o(v_i)=1$), 
$f^-$ is the optimal 
\mar{value} when vertex $v_i$ is not burning in the final setting (i.e., $\pi_o(v_i)=0$), 
$H^+$ and $H^-$ are optimal cut systems associated to $f^+$ and $f^-$, respectively.} \mar{In what follows, we will respectively denote $A_{i,b,f^+}$, $A_{i,b,f^-}$, $A_{i,b,H^+}$, and $A_{i,b,H^-}$ the four components of $A_{i,b}$.}

\begin{algorithm}[t]
\caption{Optimal Tree Cut}
\label{treecut}
\hspace*{\algorithmicindent} {\bf Input:} instance $I=(T,\mathds{1},\pi_i,\kappa,\varphi,B)$ of \MSFS with $\pi_i(v) \in\{1,0\}$.\\
\hspace*{\algorithmicindent} {\bf Output:} (optimal cut system $H$, the number of saved vertices)
\begin{algorithmic}[1]
\Procedure{TableA}{$I$}
\State pick a vertex $r$ as the root \label{algo1:root}
\State let $v_0, v_1, \ldots, r = v_{|V|-1}$ the vertices of $T$ visited in post-order from $r$ \label{algo1:postorder}
\For{$i = 0, 1, \ldots, |V|-1$}  \label{algo1:tablebstart}
\State $ A_i\gets$  \sc{TableST}($T$,$A$,$v_i$,$B$)
\EndFor  \label{algo1:tablebend}
\If{$A_{|V|-1,B,f^-} > A_{|V|-1,B,f^+}$} \label{algo1:maxstart}
\State \textbf{return}  $(A_{|V|-1,B,H^-}, A_{|V|-1,B,f^-})$
\Else
\State  \textbf{return} $(A_{|V|-1,B,H^+}, A_{|V|-1,B,f^+})$
\EndIf \label{algo1:maxend}
\EndProcedure
\end{algorithmic}
\end{algorithm}

Algorithm \ref{treecut} builds Table A. 
It is straightforward: a root is chosen at line~\ref{algo1:root}, a post order visit is executed at line~\ref{algo1:postorder}, each row of Table A is filled using procedure \pier{TableST} (see Algorithm \ref{tableB} described below) at lines~\ref{algo1:tablebstart}--\ref{algo1:tablebend}. 
Once the values for Table A are computed for all the subtrees $T_i$, the element in the last row and column contains an optimal solution for both possible states of the root $r$ (i.e., burns or does not burn). 

An optimal solution for the problem is then the maximum between these two values, with the corresponding cut system. It is returned at lines~\ref{algo1:maxstart}--\ref{algo1:maxend}.

{\underline{\mar{Table ST:}}}\\
Table \mar{ST (auxiliary dynamic programming process)}  is built for each subtree $T_i$ rooted in $v_i$ in order to compute the $i$-th row of Table A using the row $i-1$. Each row of Table \pier{ST} stores, for each possible budget $b \in 
\mar{\{0,\ldots, B\}}$, solutions (value and cut system)  for some subtrees of $T_i$ if $v_{i}$ burns and, when $v_{i}\notin V'$, if it does not burn. Table \pier{ST}  has $k + 1$ rows, where $k$ is the number of children of $v_i$, and $B+1$ columns. The first row 
stores 
solutions (optimal value and a related cut system) for the root $v_i$ without descendants (so, a single vertex). If $k>0$, 
\pier{then} each row $j > 0$ is filled with solutions 
for the subtree $T_i^j$ obtained by connecting to $v_{i}$ 
the subtrees $T_{i_1}, \ldots, T_{i_j}$. The last row then corresponds to $T_i$. Each column $b$ of Table \pier{ST} corresponds to the budget used for the related subtree. Note that the last row of Table \pier{ST} for vertex $i$ is the $i$-th row of Table A. 
\pier{Like in Table A, every entry in Table \pier{ST} is a 
\mar{4-tuple} $ST_{j,b}=(f^+,f^-,H^+,H^-)$, for subtree $T^j_i$ and budget $b$.} \mar{We use the same abbreviated notations as for Table A, respectively denoting $ST_{j,b,f^+}$, $ST_{j,b,f^-}$, $ST_{j,b,H^+}$, and $ST_{j,b,H^-}$ the four components of $ST_{j,b}$.}


\begin{algorithm}[t]
\hspace*{\algorithmicindent} {\bf Input:} Tree $T$, table $A$, vertex $v_i$ and budget $B$\\
\hspace*{\algorithmicindent} {\bf Output:} optimal solutions for the subtree $T_i$, for each budget $b$ up to $B$ 
\caption{Subprocedure for table ST}
\label{tableB}
\begin{algorithmic}[1]   
\Procedure{TableST}{$T$,$A$,$v_i$,$B$}
\State let $(v_{i_1},v_{i_2}, \ldots, v_{i_k}) $ be the children of vertex $v_i $
\If{$\pi_i(v_{i})$=1}\label{algo2:leafstart}
\State $ST_{0,b}\gets (0,-\infty,\emptyset,\emptyset)$~~~~~ $\forall\ b \in \{0,1,\ldots,B\}$
\Else
\State  $ST_{0,b}\gets (0,1,\emptyset,\emptyset) $~~~~~ $\forall\ b \in \{0,1,\ldots,B\}$ \label{algo2:initnofire}
\EndIf 
\label{algo2:leafend}


\For{$j = 1, 2, \ldots, k$}

\For{$b = 0, 1, \ldots, B$ } 
\State $z\gets \underset{x \in \{0,\ldots ,b\}}{\arg\max}  \{ST_{j-1,x,f^+} + A_{{i_j},b-x,f^+} \}$ \Comment {$v_{i_j}$ and $v_{i}$ both burn} \label{algo2:bbstart}
\State $ST_{j,b,f^+}\gets ST_{j-1,z,f^+} + A_{{i_j},b-z,f^+}$
\State $ST_{j,b,H^+}\gets ST_{j-1,z,H^+} \cup A_{{i_j},b-z,H^+}$ 
\label{algo2:bbend}

\If{$(\pi_i(v_{i_j})=0) \wedge (b \geq 1)$}
\Comment {$v_{i}$ burns, $v_{i_j}$ not, budget $\geq 1$} \label{algo2:bnbudget}
\State $z'\gets \underset{x \in \{0,\ldots ,b-1\}}{\arg \max} \{ST_{j-1,x,f^+} + A_{{i_j},b-1-x,f^-} \}$ \label{algo2:bnstart}
\State $m'\gets ST_{j-1,z',f^+} + A_{{i_j},b-1-z',f^-}$ \label{algo2:bnend}
\If{$m' \geq ST_{j,b,f^+}$} \label{algo2:bbeststart}
\State $ST_{j,b,f^+}\gets m'$
\State $ST_{j,b,H^+}\gets ST_{j-1,z',H^+} \cup A_{{i_j},b-1-z',H^-} \cup \{(v_{i},v_{i_j})\}$ \label{algo2:bnaddcutsystem}
\EndIf \label{algo2:bbestend} 

\EndIf 

\If{$\pi_i(v_{i})=0$}
\Comment {$v_{i}$ does not burn}

\If{$b \geq 1$} \Comment {$v_{i_j}$ burns, budget $\geq 1$}  \label{algo2:nbbudget}
\State $z\gets \underset{x \in (0,\ldots,b)}{\arg\max}\{ST_{j-1,x,f^-} + A_{{i_j},b-1-x,f^+} \}$ \label{algo2:nbstart}
\State $ST_{j,b,f^-}\gets ST_{j-1,z,f^-} + A_{{i_j},b-1-z,f^+}$
\State $ST_{j,b,H^-}\gets ST_{j-1,z,H^+} \cup A_{{i_j},b-1-z,H^+} \cup \{(v_{i},v_{i_j})\}$ \label{algo2:nbend}
\EndIf 
\If{$\pi_i(v_{i_j})=0$} \label{algo2:nn}
\Comment {$v_{i_j}$ does not burn}
\State $z'\gets \underset{x \in (0,\ldots,b)}{\arg\max}\{ST_{j-1,x,f^-} + A_{{i_j},b-x,f^-} \}$  \label{algo2:nnstart}
\State $m'\gets ST_{j-1,z',f^-} + A_{{i_j},b-z',f^-}$ \label{algo2:nnend}
\If{$m' \geq ST_{j,b,f^-}$} \label{algo2:nnbeststart}
\State $ST_{j,b,f^-}\gets m'$
\State $ST_{j,b,H^-}\gets ST_{i-1,z',H^-} \cup A_{{i_j},b-z,H^-}$
\EndIf  \label{algo2:nnbestend} 
\EndIf  
\EndIf  
\EndFor
\EndFor
\State \textbf{return} $ST_k$  \label{algo2:lastrow}
\EndProcedure
\end{algorithmic}
\end{algorithm}

\clearpage

The values of \mar{row $j>0$} of Table \pier{ST} for subtree $T_i$ are computed as follows. 
The root $v_{i}$ of subtree $T_i$ can either burn or not if $v_{i}\notin V'$ and certainly burns if \pier{$v_{i} \in V'$}, and the same occurs for vertex $v_{i_j}$, root of $T_{i_j}$. 
So, when completing the $j$-th row of Table \pier{ST} for $T_i$, we have at most  four possible combinations. \mar{The case where a vertex does not burn is only considered if this vertex has a probability of ignition 0 (i.e., it is not in $V'$)}.

Two of these cases are concordant (i.e, \mar{$v_{i}$ and $v_{i_j}$ both burn or both do not burn}), and then no cut is needed between them, and  two of them are discordant (one of $v_{i}$ and $v_{i_j}$ is burning and one is not) 
and in this case the edge connecting them must be cut 
in any feasible solution. Note that $-\infty$ is used as value to state that there is no related feasible solution. This is the case 
when $v_i$ is in $V'$ and is stated not burning in
the combination under analysis.

More precisely, Algorithm \ref{tableB} implements the procedure for Table ST. 
For each budget $b \in 
\mar{\{0, \ldots, B\}}$, 
the first row is filled with values $(0,1, \emptyset, \emptyset)$ if $\pi_i(v_{i})=0$ (i.e., $v_{i}$ is not burning) and $(0,-\infty,\emptyset,\emptyset)$ if $\pi_i(v_{i})=1$ (i.e., $v_{i}$ is burning).

If $v_{i}$ is 
not a leaf, then the subsequent rows are computed considering the edge between vertex $v_{i}$ and the latest added vertex $v_{i_j}$.
To compute $ST_{j,b,f^+}$  
($v_{i}$ burns in the final setting), for budget $b$, we consider the latest added vertex $v_{i_j}$ 
\mar{and distinguishing the two possible cases} whether $v_{i_j}$ burns or not. 

%


\begin{enumerate}
    \item\label{item1} \emph{both $v_{i}$ and $v_{i_j}$ burn}.
The algorithm finds the maximum value obtained by allocating $x$ and $b-x$ budget on $T_i^{j-1}$ and $T_{i_j}$ for $x \in \{0 \ldots b\}$ and summing the values of $ST_{j-1,b,f^+}$ with $A_{{i_j},b,f^+}$ (lines~\ref{algo2:bbstart}--\ref{algo2:bbend}). 
    \item\label{item2} \emph{$v_{i}$ burns and $v_{i_j}$ does not burn}.
\mar{In any feasible solution}, we must cut the edge $v_{i}v_{i_j}$ and allocate the remaining $b-1$ budget on the two subtrees. So, we distribute $x$ and $b-x-1$ budget on the two subtrees and pick the maximum values between the sums of $ST_{j-1,b,f^+}$ with \mar{$A_{{i_j},b-1-x,f^-}$} (lines~\ref{algo2:bnstart}--\ref{algo2:bnend}). 
If the value computed in this case is greater than the value computed in the previous one, then the solution is updated and edge $v_{i}v_{i_j}$ is added to the cut-system $H^+$ (lines~\ref{algo2:bbeststart}--\ref{algo2:bbestend}).
\end{enumerate}

\mar{The best value obtained in cases~\ref{item1} and \ref{item2} corresponds to the correct value for  $ST_{j,b,f^+}$ assuming that values of  $ST_{j-1,b,f^+}$, $A_{{i_j},b,f^+}$ and $A_{{i_j},b,f^-}$ are correct. }

To compute $ST_{j,b,f^-}$ now, the algorithm evaluates the two cases in which 
$v_{i}$ does not burn in the final setting and $v_{i_j}$ either burns or not. \mar{This case is similar to the previous one with the possible outputs computed at} 
lines~\ref{algo2:nbstart}--\ref{algo2:nbend} for the discordant case, lines~\ref{algo2:nn}--\ref{algo2:nnbestend} for the concordant one. 

\mar{When table ST is completed, the last row contains the values of the solution for subtree $T_i$, for all varying budgets (line~\ref{algo2:lastrow}).}

\begin{figure}[t]
\begin{center}
\scalebox{0.65}{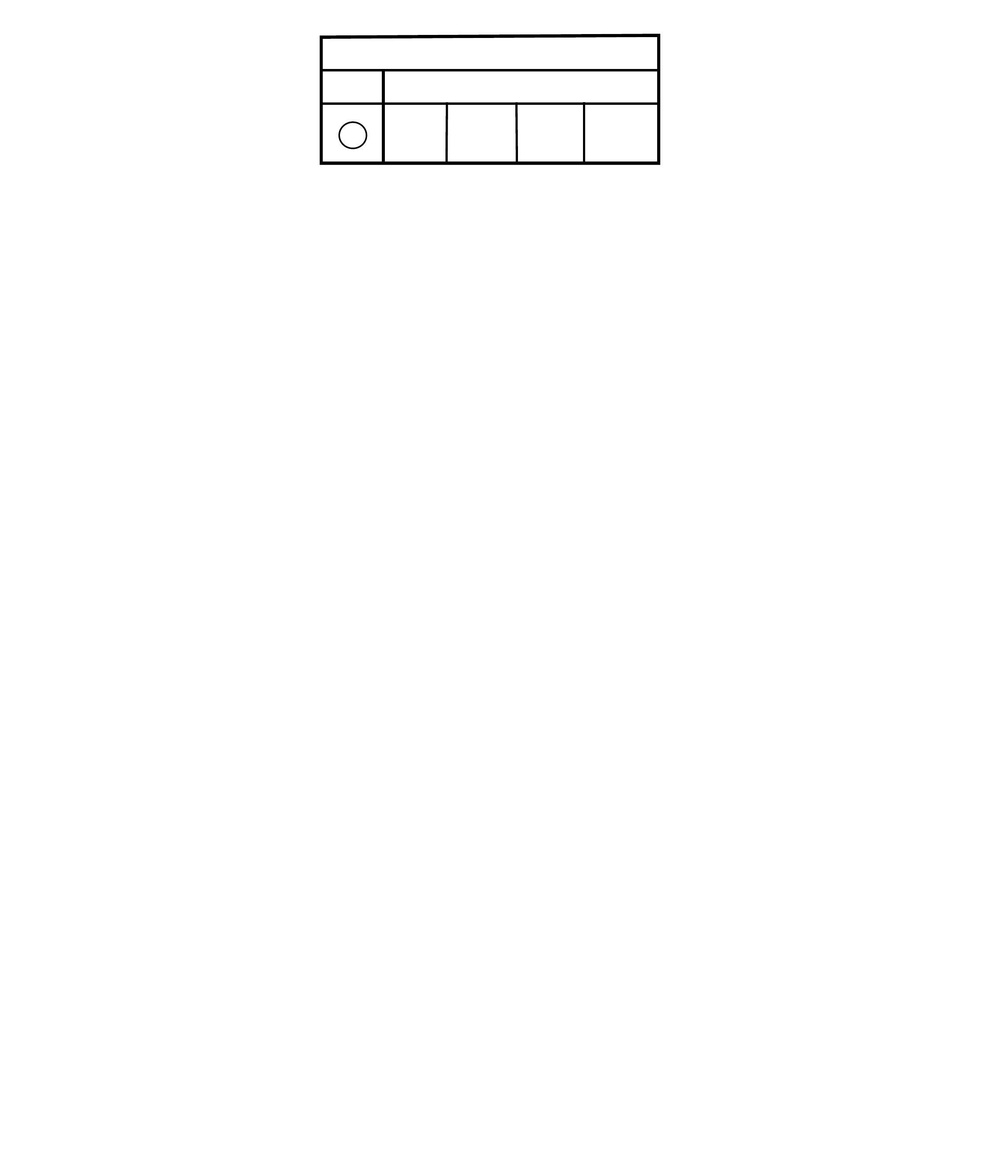}
\caption{Computation of 
\mar{an} optimal {\em cut system} $H$ 
\mar{for the} tree $T$ 
\mar{with} $\pi_i(3)=\pi_i(7)=1$ and a budget $B$=3}
\label{fig:treeAlgov2}
\end{center}
\end{figure}

\clearpage

Figure~\ref{fig:treeAlgov2} shows an example of a tree 
\mar{with} eight vertices, two of them burning (vertices 3 and 7). For simplicity, the tables do not include the related cut systems, but only the number of saved vertices. The top 
\mar{table} depicts the solutions computed for $T_0$, for each possible budget, as stored in Table \pier{ST}. 
The values 0 and 1 in ``0/1'' \mar{correspond} to the number of vertices saved when vertex $v_0$ 
burns and does not burn, respectively. Note that 
these values are not affected by the budget because they refer to a leaf of $T$ (and consequently, the related subtree has no edge). 
These values fill the first row of Table A. The center of Figure~\ref{fig:treeAlgov2} shows Table \pier{ST} for the subtree $T_3$: all possible subtrees to inspect are in the first column, the solutions -- for all budget values -- are in the subsequent columns. 
When 
Table \pier{ST} is complete, its last row 
represents an optimal solution for \pier{$T_i^k$, i.e., the} whole subtree $T_i$, \mar{for the possible states of $v_i$}, and then this row is copied into Table A at row $i$. Finally, the bottom of Figure \ref{fig:treeAlgov2} 
\mar{corresponds to} the Table A, which shows that the optimal value is the maximum between 4 and $-\infty$. The value $4$ then states that four vertices can be saved 
\mar{by} applying the associated cut system (i.e., that cuts the edges $v_3v_0$, $v_3v_1$, and $v_7v_6$).

\subsection{Algorithm correctness and computational complexity}
\label{ssec:algo-trees}

\begin{thm}\label{Theo-tree}
Algorithm~\ref{treecut} correctly computes an optimal solution for an instance    $I=(T,\mathds{1},\pi_i,\mathds{1},\\ \mathds{1},B)$ of \MSFS\ where $T=(V,E)$ is a tree and $\pi_i(v) \in\{1,0\}$.  The computational time is $O(|V| \cdot B^2)$. 
\end{thm}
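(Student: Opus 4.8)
The plan is to prove correctness by induction on the post-order numbering, establishing the invariant that every entry $A_{i,b}$ stores exactly the maximum number of savable vertices of the subtree $T_i$ (together with a witnessing cut system) under budget $b$ in each of the two relevant scenarios --- $v_i$ burns ($f^+$) or $v_i$ does not burn ($f^-$) --- and then to read off the running time from the dimensions of the two tables. The structural fact that makes the recurrences work is that every edge of a tree is a bridge, combined with Lemma~\ref{lem:px}: with unit spread probabilities and binary ignition probabilities, $x$ burns in $G_H$ iff $U_{x,H}$ contains an ignited vertex, so the number of saved vertices of $T_i$ under a cut system $H$ equals the number of vertices $u$ of $T_i$ such that $U_{u,H\cap E(T_i)}$, computed inside $T_i$, meets no vertex of $V'$.

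I would first isolate an optimal-substructure lemma. Consider the tree $T_i^j$ obtained by attaching the child subtrees $T_{i_1},\dots,T_{i_j}$ to $v_i$, and its last joining edge $v_iv_{i_j}$. If this edge belongs to the cut system, then $T_i^{j-1}$ and $T_{i_j}$ become independent, no vertex of one reaches a vertex of the other, and both the cost and the saved-vertex count split additively. If the edge is kept, then $v_i$ and $v_{i_j}$ must have the same burning status; in the ``both burn'' case each side already contains an ignited vertex reaching $v_i$, and in the ``both do not burn'' case the merged reachability set of $v_i$ still contains no ignited vertex, so in either case the saved-vertex count is again the sum of the two subtree counts (the vertex sets are disjoint, and for the kept-edge merges every vertex's saved status is unchanged because its reachability set either is untouched or grows within the same ``meets $V'$ or not'' class). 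I would then check, line by line, that the four cases in Algorithm~\ref{tableB} (two concordant without a cut, two discordant with a forced cut, the ``$+1$'' on the budget appearing exactly in the discordant branches) implement precisely this recurrence, with the conventions $f^-=-\infty$ when $v_i\in V'$ and $(f^+,f^-)=(0,1)$ resp. $(0,-\infty)$ for the single-vertex initialisation of row~$0$ of Table~$ST$.

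The induction has two nested layers. The outer one is on $T_i$ in post-order: the base case is a leaf, handled by the initialisation of Table~$ST$; in the inductive step all children subtrees have correct Table~$A$ rows since $i_j<i$. The inner one, on $j=0,1,\dots,k$, shows that row $j$ of Table~$ST$ is optimal for $T_i^j$: for the $\le$ direction I would exhibit the cut system obtained by combining the recorded witnesses of $ST_{j-1,\cdot}$ and $A_{i_j,\cdot}$ (plus the joining edge in the discordant branches); for the $\ge$ direction I would take an optimal cut system $H$ for $T_i^j$ realising a prescribed status of $v_i$, split it as $H\cap E(T_i^{j-1})$, $H\cap E(T_{i_j})$ and possibly $\{v_iv_{i_j}\}$, use the substructure lemma to see that the induced statuses of $v_i$ in $T_i^{j-1}$ and of $v_{i_j}$ in $T_{i_j}$ are consistent with whether the joining edge was cut and that the saved counts add, and conclude that the algorithm's value (a maximum over all budget splits and status combinations) is at least that of $H$. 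Taking $j=k$ yields row $i$ of Table~$A$; taking $i=|V|-1$ and returning the better of $f^+$ and $f^-$ at the root --- which has no parent, so its status is determined by the cut system itself --- gives the optimum. For the complexity, Table~$A$ has $|V|$ rows, and building row $i$ is one call to \textsc{TableST} on a vertex with $k_i$ children, filling $k_i+1$ rows and $B+1$ columns where each entry needs an $\arg\max$ over $O(B)$ split points, i.e. $O((k_i+1)B^2)$; summing and using $\sum_i k_i=|E|=|V|-1$ gives $O(|V|\,B^2)$. To stay within this bound despite the set operations on $H^\pm$, one computes the numeric tables first and recovers a single optimal cut system by a standard top-down traceback, at only $O(|V|+B)$ extra cost.

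I expect the main obstacle to be the $\ge$ direction of the inner induction: one must argue that an optimal cut system for the glued tree decomposes without loss into cut systems for the pieces, in particular that keeping the joining edge can never ``save'' a vertex on one side by exploiting the other side --- which is exactly where the bridge property and the ``reachability set meets $V'$ or not'' dichotomy are indispensable --- and the bookkeeping of which scenario is feasible (the $-\infty$ entries forced by $v_i\in V'$, and the budget offsets) is the fiddly part to get exactly right.
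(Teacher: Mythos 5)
Your proposal is correct and follows essentially the same route as the paper: a double induction (outer on the post-order rank of $v_i$, inner on the child index $j$ for Table~ST), with the recurrence justified by the concordant/discordant case analysis and additivity of saved vertices across the subtrees, and the same $O(|V|\cdot B^2)$ accounting. If anything, you spell out more explicitly (via the bridge property and Lemma~\ref{lem:px}) the optimal-substructure step that the paper delegates to the informal discussion of Section~\ref{subsec:algo_descript}, which is a welcome tightening rather than a different approach.
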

\begin{proof}
\mar{We prove the correctness by induction on the rank in post order.}
For each leaf 
$v_i$ of $T$, Algorithm~\ref{treecut} correctly computes the values of $A_i$ by calling Algorithm~\ref{tableB}, where the only instructions executed are from line~\ref{algo2:leafstart} to line~\ref{algo2:leafend}, since $v_i$ has no children. \mar{This proves the base step but also, later in the algorithm, the correctness of the values $A_{i,b}$ for any leaf $v_i$.}

Now, given a vertex $v_i$ that is not a leaf (\mar{in particular $i>0$}, we assume \mar{by induction hypothesis} that an optimal solution is computed for each subtree $T_{i_1}, \ldots, T_{i_k}$ rooted at vertices $v_{i_1}, \ldots, v_{i_k}$, children of $v_i$ and for each budget $ b=0,\ldots,B$. These values are stored in $A_{i_j}$, for each $j=1,\ldots,k$ (in particular in each row $i_j$ of Table A). 
\mar{We prove by induction on $j=0, \ldots, k$ that the values $ST_{j,b,f^+}, b=0, \ldots, k$ are correct.}


For $T_i^0$, that is the subtree consisting of vertex $v_i$ only, an optimal solution for each possible budget $b=0,\ldots, B$ is computed  from line~\ref{algo2:leafstart} to line~\ref{algo2:leafend}, and stored in $ST_{0,b}$. Now, assume that $ST_{j-1,b}=(f^+,f^-,H^+,H^-)$ is correctly computed for the subtree $T^{j-1}_i$, for each possible $b$ \mar{and $j\in \{1, \ldots, k\}$. The discussion in Section~\ref{subsec:algo_descript} justifies that the line $j$ of Table ST is then properly filled-in. By induction, it shows that the line $k$ of Table ST will be properly filled-in by Algorithm~\ref{tableB}. Since $T_i^{k}$ is the subtree $T_i$, it completes the proof of the induction step that Algorithm~\ref{treecut} correctly fills Table A in. This completes the proof of the correctness. } 

Regarding the computational complexity, the execution time of Algorithm~\ref{tableB} is dominated by the instruction at line \ref{algo2:bbstart} that requires $O(B)$ time. Since it is repeated $k(B+1)$ times,  Algorithm~\ref{tableB} requires $O(k \cdot B^2)$ time, where $k$ is the number of children for the node into consideration. Algorithm~\ref{tableB} is called for each node of the tree by Algorithm~\ref{treecut}, then the overall computational complexity of Algorithm~\ref{treecut} is \mar{$O(|E| \cdot B^2)$ and since $T$ is a tree, $O(|E|)=O(|V|)$, the complexity is $O(|V| \cdot B^2)$, which completes the proof.} 
\end{proof}

\mar{We can assume $B\leq n-2$ since for larger budget we can cut all edges. So, the complexity of Algorithm \ref{treecut} is dominated by $O(n^3)$.}

\mar{For simplicity of the presentation we have described Algorithm~\ref{tableB} with unitary edge costs and vertex values. However, it can easily} 
be generalized 
\mar{to take} into account the vertices' weights and the edges' costs as follows. 

\mar{Note first that Proposition~\ref{prop:value-reduction} could be applied directly to take into account polynomially bounded vertex values since the class of trees is stable by subdivision of edges. However, a direct generalization of Algorithm~\ref{tableB} does not require the polynomially bounded condition for vertex values. In addition, Proposition~\ref{prop:cost-reduction} cannot be applied since the related transformation does not preserve the class of trees.}

In case of generic vertices' weights, Algorithm \ref{tableB} has to calculate the sum of the weights of the vertices that do not burn, instead of the number of saved vertices, for each subtree. The only required change is in line~\ref{algo2:initnofire}, where $ST_{0,b}$ should be initialized to $(0,\varphi(v_i),\emptyset,\emptyset)$.

With respect to the edges' costs, the Algorithms can be generalized if costs are integer. To this aim, Algorithm~\ref{tableB} should be changed when a cut is needed on the edge $e$, between $v_i$ and the root of $T_{i_{j}}$. If the cost is $c=\kappa(e)$, then the Algorithm should calculate the optimal solutions for a remaining budget $b-c$ (instead of $b-1$), if $b \geq c$. For instance, the condition in line~\ref{algo2:bnbudget} should be changed to $(\pi_i(v_{i_j})=0) \wedge (b \geq c)$, and line~\ref{algo2:bnstart} to $z' \gets \underset{x \in \{0,...,b-c\}}{\arg \max} \{ ST_{j-1,x,f^+} + A_{i_j,b-c-x,f^-} \}$. Similar changes should be applied to lines~\ref{algo2:bnend}, \ref{algo2:bnaddcutsystem}, \ref{algo2:nbbudget}--\ref{algo2:nbend}.

\mar{Algorithm~\ref{treecut} remains unchanged and the overall complexity  $O(|V| \cdot B^2)$ is the same but, this time, we can only assume $B \gab{<} \sum \kappa(e)$ for integral values of edge costs. So, the overall process is polynomial only for polynomially bounded integral edge costs and pseudo polynomial else. }

These considerations lead to the following theorem.

\begin{thm}\label{Theo-tree2}
There exists an algorithm that correctly computes an optimal solution for an instance $I=(T=(V,E),\mathds{1},\pi_i,\kappa,\varphi,B)$ of \MSFS\ where $T$ is a tree, \mar{$\kappa(e) \in \mathbb{N}$, $\forall e\in E$} and $\pi_i(v) \in\{1,0\}$. Its computational complexity is polynomial in $|V|$ if $B= O(Poly(|V|))$ or $\kappa(e) = O(Poly(|V|))$. 
\end{thm}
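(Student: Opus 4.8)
The plan is to prove Theorem~\ref{Theo-tree2} as a weighted generalization of Theorem~\ref{Theo-tree}: I would keep Algorithm~\ref{treecut} unchanged, use the obvious weighted variant of Algorithm~\ref{tableB} sketched in the paragraph preceding the statement, and then re-run the correctness induction of Theorem~\ref{Theo-tree} essentially verbatim. Concretely, two modifications to Algorithm~\ref{tableB} are needed. First, the $f^{+}/f^{-}$ fields now store the \emph{total value} $\varphi(\cdot)$ of the non-burning vertices of the relevant subtree instead of their number; this only affects the base case of the auxiliary process, so line~\ref{algo2:initnofire} becomes $ST_{0,b}\gets(0,\varphi(v_i),\emptyset,\emptyset)$, while the burning initialization (lines~\ref{algo2:leafstart}--\ref{algo2:leafend}) keeps $f^{+}=0$ and, for $v_i\in V'$, $f^{-}=-\infty$. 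Second, whenever a discordant pair $(v_i,v_{i_j})$ forces the cut of the edge $e=v_iv_{i_j}$, the residual budget to be split between $T_i^{j-1}$ and $T_{i_j}$ is $b-\kappa(e)$ rather than $b-1$, and this branch is attempted only when $b\ge\kappa(e)$; i.e.\ one replaces the guards ``$b\ge 1$'' by ``$b\ge\kappa(e)$'' (lines~\ref{algo2:bnbudget} and~\ref{algo2:nbbudget}) and the index ranges and value updates ``$b-1-x$'' by ``$b-\kappa(e)-x$'' (lines~\ref{algo2:bnstart}--\ref{algo2:bnend} and the analogous lines~\ref{algo2:nbbudget}--\ref{algo2:nbend}), updating the cut-system field at line~\ref{algo2:bnaddcutsystem} accordingly. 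The concordant branches and the budget-split convolution at line~\ref{algo2:bbstart} are untouched.

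Next I would argue correctness. The combinatorial picture is unchanged: on a tree the connected component of a burning vertex in $G_H$ is exactly the set of vertices still joined to it, and since $\pi_s\equiv 1$, Lemma~\ref{lem:px} gives $\rho(G_H)=\sum_{x}\varphi(x)$ over the burning vertices, so maximizing saved value is maximizing $\sum\varphi$ over non-burning vertices. The optimal-substructure argument of Theorem~\ref{Theo-tree} then carries over word for word. By induction on the post-order rank, $A_{i,b}$ is correct for leaves (only the modified initialization runs), and for an internal vertex $v_i$ one shows by an inner induction on $j$ that each $ST_{j,b,f^{+}}$ and $ST_{j,b,f^{-}}$ is optimal: an optimal cut system on $T_i^{j}$ either keeps $e=v_iv_{i_j}$ (concordant states, the budgets on the two pieces summing to $b$) or removes it (discordant states, budgets summing to $b-\kappa(e)$, which is feasible only when $b\ge\kappa(e)$); the algorithm takes the componentwise maximum over exactly these finitely many options, using the already-computed rows $A_{i_1},\dots,A_{i_k}$. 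The reported answer is $\max\{A_{|V|-1,B,f^{+}},A_{|V|-1,B,f^{-}}\}$, as before.

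Finally the complexity. Tables $A$ and $ST$ still have $|V|$ and $k+1$ rows respectively with $B+1$ columns, and each entry is produced by an $\arg\max$ over $O(B)$ budget splits, so Algorithm~\ref{tableB} runs in $O(k\cdot B^{2})$ and the whole procedure in $O(|V|\cdot B^{2})$, exactly as in Theorem~\ref{Theo-tree}. The only genuinely new point — and the thing to be careful about, rather than any conceptual difficulty — is the range of $B$: with non-unit costs we may no longer assume $B\le|V|-2$. However one may always assume $B\le\sum_{e\in E}\kappa(e)$, since a larger budget lets one cut every edge and is equivalent to taking $B=\sum_{e}\kappa(e)$, and capping $B$ this way changes neither feasibility nor the optimum. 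Hence if $B=O(\mathrm{Poly}(|V|))$, or if $\kappa(e)=O(\mathrm{Poly}(|V|))$ for all $e$ (so that $\sum_{e}\kappa(e)\le|E|\max_{e}\kappa(e)=O(\mathrm{Poly}(|V|))$ and we may cap $B$ accordingly), the running time $O(|V|\cdot B^{2})$ is polynomial in $|V|$; otherwise it is pseudo-polynomial, since the dynamic-programming scheme itself is inherited unchanged from Theorem~\ref{Theo-tree}.
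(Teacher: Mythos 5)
Your proposal is correct and follows essentially the same route as the paper: the paper also keeps Algorithm~\ref{treecut} unchanged, initializes $ST_{0,b}$ to $(0,\varphi(v_i),\emptyset,\emptyset)$, replaces the residual budget $b-1$ by $b-\kappa(e)$ with the guard $b\ge\kappa(e)$ in the discordant branches, and inherits the correctness induction and the $O(|V|\cdot B^2)$ bound from Theorem~\ref{Theo-tree}. Your capping argument $B\le\sum_{e\in E}\kappa(e)$ matches the paper's observation that the scheme is polynomial exactly when $B$ or the integral edge costs are polynomially bounded, and pseudo-polynomial otherwise.
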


\section{Conclusions}
\label{sec:conclusions}

In this work, we study the computational complexity of \MFS, introduced in~\cite{safetyscience2021} and the restricted version \MSFS. Both problems are motivated by a wild fire management context. We focus on specific instances, in particular low degree planar graphs, since they make sense for the application under consideration. We  prove the  NP-completeness of \MSFS in planar bipartite graphs using  a new restricted version of Planar Max 2-SAT. Using two successive reductions, we even show that it is still NP-complete in bipartite planar graphs of maximum degree~4 and unitary vertex values and edge costs. On the other hand, we proved that \MSFS is polynomial on trees with polynomially bounded edge costs and binary ignition probabilities.

Our investigations so far still leave a significant gap between the best hardness result and this polynomial case. This motivates some open questions. The main open cases we want to outline are as follows:

\begin{quote}
    \begin{itemize}
        \item  Determine the complexity of \MSFS in subgrids or even in grids, for unitary vertex values and edge costs. \item  Determine the complexity of \MSFS in planar bipartite graphs for unitary vertex values and edge costs and polynomially bounded rational ignition probabilities. 
        \item Determine the complexity of \MSFS for binary ignition probabilities. This case is purely combinatorial in the sense that  the objective value is directly expressed as the  number of vertices  achievable from burning vertices in the graph obtained after removing a cut system.
    \end{itemize}
\end{quote}

The hardness results we obtained motivate the study of the approximation properties of the problem and  identifying new classes of instances solvable in polynomial time. All together these results would help better understanding the problem. In~\cite{safetyscience2021}, we proposed and implemented a heuristic for a generalisation of \MFS. Approximation results and new polynomial cases would give keys to improve  this heuristic for a practical use. 

Finally, an interesting generalisation of \MFS is to take into account the spread of fire by embers. From a graph theory's point of view, it corresponds to adding some edges that cannot be part of cut system (ember edges) and the graph obtained by removing all these ember edges is planar.



\bibliographystyle{abbrv}
\bibliography{references,references_a}

\begin{thebibliography}{10}

\bibitem{Blasi2004IncendiAmbientale.}
C.~Blasi, G.~Bovio, P.~M. Corona, M.~Marchetti, A.~Maturani, and {others}.
\newblock {\em {Incendi e complessit{\`{a}} ecosistemica. Dalla pianificazione
  forestale al recupero ambientale.}}
\newblock Ministero dell’ambiente e della tutela del territorio. SBI, 2004.

\bibitem{wang2012scalable}
W.~Chen, C.~Wang, and Y.~Wang.
\newblock {Scalable influence maximization for prevalent viral marketing in
  large-scale social networks}.
\newblock {\em Proceedings of the 16th ACM SIGKDD international conference -
  KDD '10}, 25(3):1029, 2010.

\bibitem{bayesian-building}
H.~Cheng and G.~V. Hadjisophocleous.
\newblock The modeling of fire spread in buildings by bayesian network.
\newblock {\em Fire Safety Journal}, 44(6):901--908, 2009.

\bibitem{dahlhaus1994complexity}
E.~Dahlhaus, D.~S. Johnson, C.~H. Papadimitriou, P.~D. Seymour, and
  M.~Yannakakis.
\newblock The complexity of multiterminal cuts.
\newblock {\em SIAM Journal on Computing}, 23(4):864--894, 1994.

\bibitem{safetyscience2021}
M.~Demange, A.~{Di Fonso}, G.~{Di Stefano}, and P.~Vittorini.
\newblock Network theory applied to preparedness problems in wildfire
  management.
\newblock 2021.
\newblock Submitted manuscript. Under review.

\bibitem{icores16}
M.~Demange and C.~Tanasescu.
\newblock A multi-period vertex cover problem and application to fuel
  management.
\newblock In B.~Vitoriano, G.~H. Parlier, and D.~de~Werra, editors, {\em
  Proceedings of 5th the International Conference on Operations Research and
  Enterprise Systems {(ICORES} 2016), Rome, Italy, February 23-25, 2016}, pages
  51--57. SciTePress, 2016.

\bibitem{Diestel2018GraphTheory}
R.~Diestel.
\newblock {\em {Graph theory}}.
\newblock Springer Publishing Company, Incorporated, 2018.

\bibitem{Durrett1988}
R.~Durrett.
\newblock {\em {Lecture notes on particle systems and percolation}}.
\newblock Wadsworth {\&} Brooks/Cole Advanced Books {\&} Software, 1988.

\bibitem{Diaz-UDG}
J.~Díaz and G.~Mertzios.
\newblock Minimum bisection is np-hard on unit disk graphs.
\newblock volume 8635, 03 2014.

\bibitem{Garey1979ComputersNP-completeness}
M.~R. Garey and D.~S. Johnson.
\newblock {\em {Computers and intractability : a guide to the theory of
  NP-completeness}}.
\newblock W.H. Freeman, 1979.

\bibitem{Garey1976SomeProblems}
M.~R. Garey, D.~S. Johnson, and L.~Stockmeyer.
\newblock {Some simplified NP-complete graph problems}.
\newblock {\em Theoretical Computer Science}, 1(3):237--267, 2 1976.

\bibitem{Goldenberg2001}
J.~Goldenberg, L.~Barak, and E.~Muller.
\newblock {Using Complex Systems Analysis to Advance Marketing Theory
  Development : Modeling Heterogeneity Effects on New Product Growth through
  Stochastic Cellular Automata}.
\newblock {\em Academy of Marketing Science Review}, 2001.

\bibitem{Guibas1993ApproximatingPaths}
L.~J. Guibas, J.~E. Hershberger, J.~S. Mitchell, and J.~S. Snoeyink.
\newblock {Approximating Polygons and Subdivisions with Minimum-Link Paths}.
\newblock {\em International Journal of Computational Geometry {\&}
  Applications}, 03(04):383--415, 12 1993.

\bibitem{Kempe2015MaximizingNetwork}
D.~Kempe, J.~Kleinberg, and E.~Tardos.
\newblock {Maximizing the Spread of Influence through a Social Network}.
\newblock {\em Theory of Computing}, 11(1):105--147, 2015.

\bibitem{khalil2013cuttingedge}
E.~Khalil, B.~Dilkina, and L.~Song.
\newblock {CuttingEdge: influence minimization in networks}.
\newblock In {\em Proceedings of Workshop on Frontiers of Network Analysis:
  Methods, Models, and Applications at NIPS}, 2013.

\bibitem{Kimura2009}
M.~Kimura, K.~Saito, and H.~Motoda.
\newblock {Blocking links to minimize contamination spread in a social
  network}.
\newblock {\em ACM Transactions on Knowledge Discovery from Data}, 3(2):1--23,
  4 2009.

\bibitem{lichtenstein1982planar}
D.~Lichtenstein.
\newblock Planar formulae and their uses.
\newblock {\em SIAM journal on computing}, 11(2):329--343, 1982.

\bibitem{Lung2013AChange}
T.~Lung, C.~Lavalle, R.~Hiederer, A.~Dosio, and L.~M. Bouwer.
\newblock {A multi-hazard regional level impact assessment for Europe combining
  indicators of climatic and non-climatic change}.
\newblock {\em Global Environmental Change}, 23(2):522--536, 4 2013.

\bibitem{Maehara2017}
T.~Maehara, H.~Suzuki, and M.~Ishihata.
\newblock {Exact Computation of Influence Spread by Binary Decision Diagrams}.
\newblock In {\em Proceedings of the 26th International Conference on World
  Wide Web - WWW '17}, pages 947--956, New York, New York, USA, 2017. ACM
  Press.

\bibitem{Mahmoud2018UnravelingFires}
H.~Mahmoud and A.~Chulahwat.
\newblock {Unraveling the Complexity of Wildland Urban Interface Fires}.
\newblock {\em Scientific Reports}, 8(1):9315, 12 2018.

\bibitem{Martell2011TheChallenges}
D.~Martell and D.~Martell.
\newblock {The development and implementation of forest and wildland fire
  management decision support systems: reflections on past practices and
  emerging needs and challenges}.
\newblock {\em Mathematical and Computational Forestry {\&} Natural-Resource
  Sciences (MCFNS)}, 3(1):18--26, 2 2011.

\bibitem{Paveglio2015CategorizingArchetypes}
T.~B. Paveglio, C.~Moseley, M.~S. Carroll, D.~R. Williams, E.~J. Davis, and
  A.~P. Fischer.
\newblock {Categorizing the Social Context of the Wildland Urban Interface:
  Adaptive Capacity for Wildfire and Community “Archetypes”}.
\newblock {\em Forest Science}, 61(2):298--310, 4 2015.

\bibitem{PrefetdeCorse2013PlanNaturelsWithURL}
{Pr{\'{e}}fet de Corse}.
\newblock {Plan de protection des for{\^{e}}ts et des espaces naturels}.
\newblock Technical report, PR{\'{E}}FET DE CORSE, 2013,
  http://www.corse-du-sud.gouv.fr/le-ppfeni-en-corse-a373.html.

\bibitem{Provan1986}
J.~S. Provan.
\newblock {The Complexity of Reliability Computations in Planar and Acyclic
  Graphs}.
\newblock {\em SIAM Journal on Computing}, 15(3):694--702, 8 1986.

\bibitem{Begonia-Adan2020}
A.~Rodríguez and B.~Vitoriano.
\newblock Probability-basedwildfire risk measure for decision-making.
\newblock {\em Mathematics}, 8:557, 04 2020.

\bibitem{Russo2016AFires}
L.~Russo, P.~Russo, and C.~I. Siettos.
\newblock {A Complex Network Theory Approach for the Spatial Distribution of
  Fire Breaks in Heterogeneous Forest Landscapes for the Control of Wildland
  Fires}.
\newblock {\em PLOS ONE}, 11(10):e0163226, 10 2016.

\bibitem{Shakarian2015TheModels}
P.~Shakarian, A.~Bhatnagar, A.~Aleali, E.~Shaabani, and R.~Guo.
\newblock {The independent cascade and linear threshold models}.
\newblock In {\em SpringerBriefs in Computer Science}, pages 35--48. Springer,
  2015.

\bibitem{Tong}
H.~Tong, B.~Prakash, T.~Eliassi-Rad, M.~Faloutsos, and C.~Faloutsos.
\newblock Gelling, and melting, large graphs by edge manipulation.
\newblock pages 245--254, 10 2012.

\bibitem{Turco2018SkilfulPredictions}
M.~Turco, S.~Jerez, F.~J. Doblas-Reyes, A.~Aghakouchak, M.~C. Llasat, and
  A.~Provenzale.
\newblock {Skilful forecasting of global fire activity using seasonal climate
  predictions}.
\newblock {\em Nature Communications}, 9(1):1--9, 12 2018.

\bibitem{Vittorini2010AnDiseases}
P.~Vittorini, A.~Villani, and F.~Di~Orio.
\newblock {An individual-based networked model with probabilistic relocation of
  people and vectors among locations for simulating the spread of infectious
  diseases}.
\newblock {\em Journal of Biological Systems}, 18(4), 2010.

\bibitem{Whittaker2012VulnerabilityVictoria}
J.~Whittaker, J.~Handmer, and D.~Mercer.
\newblock {Vulnerability to bushfires in rural Australia: A case study from
  East Gippsland, Victoria}.
\newblock {\em Journal of Rural Studies}, 28(2):161--173, 4 2012.

\bibitem{embed}
L.~Yanpei, A.~Morgana, and B.~Simeone.
\newblock General theoretical results on recti- linear embedability of graphs.
\newblock {\em Acta Mathematicae Applicatae Sinica}, 7:187--192, 1991.

\end{thebibliography}

\end{document}